\newcommand{\RR}{\mathscr{R}} %integer ring
\newcommand{\PP}{\mathscr{P}} %prime ideal
\newcommand{\Q}{\mathscr{Q}} %quadratic form
\newcommand{\p}{\varpi} %conductor
\newcommand{\ratk}{\mathit{k}} % p-adic field
\newcommand{\resk}{\mathfrak{f}}  % residue field
\newcommand{\charp}{p} % characteristic of field
\newcommand{\resp}{p_{\resk}}  % characteristic of residue field
\newcommand{\algK}{\mathrm{K}} % alg closure
\newcommand{\algG}{\mathbb{G}} % alg group
\newcommand{\algM}{\mathbb{M}}
\newcommand{\ratM}{\mathrm{M}}
\newcommand{\Spn}{{\mathrm{Sp}_{2n}}}
\newcommand{\SLn}{{\mathrm{SL}_n}}
\newcommand{\GLn}{{\mathrm{GL}_n}}
\newcommand{\GLnn}{{\mathrm{GL}_{2n}}}
\newcommand{\ratG}{\mathrm{G}} % p-adic group
\newcommand{\algg}{{\mathfrak{g}_\algK}} %Lie algebra over K
\newcommand{\ratg}{\mathfrak{g}} %Lie algebra over k
\newcommand{\buil}{\mathcal{B}} % building
\newcommand{\apart}{\mathcal{A}} % apartment
\newcommand{\ratT}{\mathrm{T}} % torus
\newcommand{\ratB}{\mathrm{B}} %Borel
\newcommand{\algT}{\mathbb{T}} % torus in alg gp
\newcommand{\ratt}{\mathfrak{t}} % lie algebra torus
\newcommand{\facet}{{\mathcal{F}}} % facet
\newcommand{\nilpk}{\mathrm{Nil}(\ratk)} % set of nilpotent orbits /k
\newcommand{\Ad}{\mathrm{Ad}}
\newcommand{\rank}{\mathrm{rank\;}}
\newcommand{\Z}{\mathbb{Z}}   %integers
\newcommand{\ep}{\varepsilon} %nontrivial nonsquare in O^\times
\newcommand{\val}{\textrm{val}} %valuation in k
\newcommand{\sltwo}{\mathfrak{sl}_2}
\newcommand{\Hasse}{\textrm{Hasse}}
\newcommand{\diag}{\textrm{diag}}
\newcommand{\Qani}{Q_{\textrm{aniso}}}
\theoremstyle{plain}
\newtheorem{theorem}{Theorem}[section]
\newtheorem{lemma}[theorem]{Lemma}
\newtheorem{proposition}[theorem]{Proposition}
\newtheorem{corollary}[theorem]{Corollary}
\theoremstyle{definition}
\newtheorem{definition}[theorem]{Definition}
\newtheorem{remark}[theorem]{Remark}
\theoremstyle{remark}
\newtheorem{example}[theorem]{Example}
\numberwithin{equation}{section}
\numberwithin{table}{section}
\begin{document}

\title[Nilpotent Orbits]{On Nilpotent Orbits of $\SLn$ and $\Spn$ over a Local Non-Archimedean Field}
\author{Monica Nevins}
\address{Department of Mathematics and Statistics, University of Ottawa, K1N 6N5}
\email{mnevins@uottawa.ca} 

%\titlerunning{Nilpotent Orbits}
%\journalname{Algebras and Representation Theory}

\keywords{nilpotent orbits \and DeBacker parametrization \and  algebraic groups \and Bruhat-Tits building}
\subjclass[2000]{20G25 (17B45)}
%\subclass{20G25 (17B45)}

\date{\today}

 \thanks{This research supported by Faculty of Science at the University of Ottawa, as well as by the Natural Sciences and Engineering Research Council of Canada (NSERC).}

\begin{abstract}
We relate the partition-type parametrization of rational (arithmetic)
nilpotent adjoint orbits of the classical groups $\SLn$ and
$\Spn$ over local non-Archimedean fields 
with a parametrization, introduced by DeBacker in 2002, which uses the
associated Bruhat-Tits building to relate the question to one
over the residue field.
\end{abstract}

\maketitle

\section{Introduction}

Let $\ratk$ be a local non-Archimedean field and let $\algG$ be a reductive
linear algebraic group defined over $\ratk$.
In \cite{DeBacker}, DeBacker parametrizes the set of $\ratk$-rational 
(that is, arithmetic)
nilpotent adjoint orbits of
$\algG$
by equivalence classes of objects coming from the Bruhat-Tits
building of the group.  This parametrization 
forms a key step in
DeBacker's proof of the range of validity of the Harish-Chandra-Howe 
character expansion in \cite{DeBackerhomog}.

As a special case of the Dynkin-Kostant classification, one can parametrize the algebraic (that is, geometric)  
nilpotent adjoint orbits of a classical 
algebraic group $\algG$ explicitly 
by way of 
the action of $\sltwo$-triples
on the standard representation $V$, and this classification can be 
conveniently interpreted via partitions of $n = \dim(V)$.  
The
$\ratk$-rational points of each algebraic orbit decompose into one or more
rational orbits under the action of $\algG(\ratk)$.  The parametrization
of these rational orbits thus additionally involves terms dependent
on the field $\ratk$, such as equivalence classes of nondegenerate
quadratic forms.

In this paper, we give such a partition-type classification of
rational nilpotent orbits of $\SLn$ in Proposition~\ref{P:slorbits}
and of $\Spn$ in Propositions~\ref{P:sp} and \ref{P:representatives},
based on the argument for real groups given in \cite{CMcG}.
We then interpret DeBacker's 
parameter set explicitly for the groups $\SLn$ and $\Spn$,
and define a map from this partition-type parametrization to the
DeBacker one for rational nilpotent orbits of these groups.
This is the content of Theorem~\ref{T:slcorrespondence} and 
Theorem~\ref{T:debackerspn}. 

One also may classify algebraic nilpotent adjoint orbits 
of reductive linear algebraic groups
via the Bala-Carter classification, and DeBacker describes
his pa\-ra\-met\-ri\-za\-tion as ``an affine analogue of Bala-Carter
theory'' \cite{DeBacker}.  A real analogue of the Bala-Carter
classification was provided in \cite{noel} by No\"el.
Though originally proven over algebraically closed fields
of characteristic either zero or sufficiently large, Bala-Carter theory
has been extended to algebraically 
closed fields of good characteristic by Pommerening.    Recent
work of McNinch, including \cite{McN1,McNinch}, has also advanced the
theory for rational orbits over fields of low characteristic.
More generally and classically, given an algebraic orbit
one may apply Galois cohomology towards understanding the
corresponding rational orbits; this was used previously by the author
in studying admissible nilpotent orbits of exceptional $p$-adic
groups in \cite{Nevins}.  Waldspurger gave a parametrization
of rational nilpotent orbits of symplectic, orthogonal and unitary
groups over the $p$-adic numbers in \cite[I.6]{Waldspurger};
his symplectic case is equivalent to our Proposition~\ref{P:sp}
for such $\ratk$.

These various parametrization schemes have different ranges
of validity and of applicability.  
The Bala-Carter parametrization of algebraic orbits, as well as
the DeBacker parametrization of rational orbits, 
applies to all reductive linear algebraic groups, whereas
partition-type classifications are restricted to the classical
groups.  The Bala-Carter classification avoids the
problematic (in
low characteristic) use of Lie triples upon which the
DeBacker parametrization in \cite{DeBacker} 
and partition-type classifications
rely.  Partition-type classifications yield explicit
representatives and encode much information about the corresponding
algebraic orbit, including dimension, the closure ordering on
orbits, and whether the orbit is special,
even or distinguished.  In contrast, we see here that these properties are not 
easily read from the DeBacker parametrization.

There are a number of interesting questions to pursue with regards
to DeBacker's parametrization.  
The first is that the parametrization varies with the choice of
a real number $r$.  For different choices of $r$, the number of
classes of objects in the building ($r$-facets) to which orbits
are associated can increase, and so correspondingly the number of
orbits associated to each $r$-facet will decrease.   In particular, 
two orbits associated to the same class of $r$-facet 
may be associated to different classes of $s$-facets, for $r\neq s$.
That said, given the $r$-facet to which
an orbit is associated, plus additional information
about the orbit (a Lie triple and adapted one-parameter subgroup)
it is easy to find the $s$-facet to which it is associated, for any $s$
(Corollary~\ref{C:justone}).  Choosing
$r$ to be irrational maximizes the number of classes of $r$-facets,
a feature we exploit in the proofs of Theorems~\ref{T:slcorrespondence} and 
\ref{T:debackerspn}. 

Secondly, the classification of the 
classes of $r$-facets seems quite difficult in general.  We determine some
equivalence classes for the case of $\SLn$ in Corollary~\ref{C:assoc}.
The problem can be reduced to a finite computation, since it suffices
to consider all $r$-facets meeting a fundamental chamber, but an
elegant solution seems elusive.

Thirdly, the dimension of the $r$-facet
to which a given rational orbit is associated is not in general an invariant
of the algebraic orbit.   This feature may perhaps offer more tools to distinguish between the various rational orbits in one algebraic class.   We prove the 
dimension is an invariant for $\SLn$ in 
Corollary~\ref{C:dimsl} and is not for
$\Spn$ in Corollary~\ref{C:dim}.

Finally, the DeBacker parametrization is proven under hypotheses
which require large residual characteristic, although DeBacker 
conjectures \cite{DeBacker} that the correspondence should
hold more generally.  We explore this question through some
examples in Section~\ref{S:smallreschar}, where we see that
one sticking point is that the intrinsic definition of the classes
arising in the correspondence (distinguished pairs, Definition~\ref{D:distinguished}) doesn't extend to small residual characteristic.  We can
also show that (not surprisingly) the correspondence cannot hold
if the residual characteristic is not good for $\algG$.

This paper is organized as follows.  In Section~\ref{S:Notation} we set our notation and recall several well-known
results about Lie ($\sltwo$) triples, the partition-type classification of
algebraic nilpotent orbits, Bruhat-Tits buildings and Moy-Prasad filtrations.  In Section~\ref{S:debacker}, we summarize the key results needed here about
DeBacker's parametrization
of rational nilpotent orbits.

In Section~\ref{S:sl} we turn our attention to the group $\SLn$.  We first
give the parame\-tri\-zation of rational
nilpotent orbits of $\SLn(\ratk)$ in Proposition~\ref{P:slorbits}; this  
is presumably well-known.  Our purpose is to produce
preferred representatives of the orbits, which we use to
deduce the corresponding DeBacker parameters
in Theorem~\ref{T:slcorrespondence}.  We conclude the section
by stating several Corollaries of the main theorem.

In Section~\ref{S:sp}, we consider the group $\Spn$.  
We begin by describing the partition-type classification of
rational nilpotent orbits in Section~\ref{SS:sppartition}.  To give
explicit representatives of these orbits, 
we briefly recall some facts about quadratic forms over local fields,
and then choose preferred representatives
for equivalence classes of nondegenerate quadratic forms, 
in
Section~\ref{SS:lam}.  The corresponding  orbit representatives are given
in Section~\ref{SS:sporbits}, and our main theorem is presented in
Section~\ref{SS:spdebacker}.  

We conclude in Section~\ref{S:smallreschar} with some illustrative 
examples and discussion about issues arising in small residual characteristic.
  
\subsection*{Thanks}
I wish to thank Stephen DeBacker for
some discussions on the subject, and Jason Levy for spending many hours 
learning about buildings with me.

\section{Preliminaries} \label{S:Notation}

Let $\ratk$ be a non-Archimedean local field of characteristic $\charp$
with finite residue field $\resk$ of characteristic $\resp >0$.
Then either $\ratk$ is a $p$-adic
field of characteristic $0$, or $\ratk$ is
a field of Laurent series over a finite field $\resk$, and $\charp = \resp > 0$.  
For the DeBacker correspondence we will require $\resp$ to be sufficiently
large, but for the partition-type classification it will suffice to 
ask that $\charp$ is either zero or sufficiently large.

Let $\RR$ denote the integer ring of $\ratk$ and $\PP$ the maximal
ideal of $\RR$.
Let $\p$ denote a uniformizer of $\ratk$ and let the discrete valuation
on $\ratk$ be normalized so that $\val(\p) = 1$.  
Let $\algK$ be an algebraic closure of $\ratk$.  

Let $\algG$ be a classical 
linear algebraic group defined over $\ratk$, and identify $\algG = \algG(\algK)$.  Write $\algG^0$ for the connected component of the identity of $\algG$.
Let $\algg$ denote its Lie algebra and
$V$ the vector space of its natural representation.
Set $\ratG = \algG(\ratk)$,  the group of $\ratk$-rational points of $\algG$,
and set $\ratg=\algg(\ratk)$.  We also write $V$ in place of $V(\ratk)$, where
this will not cause confusion.

 In this
paper, we will consider $\algG = \SLn$ or $\algG = \Spn$,
for some $n \geq 2$.  The group $\SLn$ consists of unimodular matrices.
Set $J = \left[ \begin{smallmatrix} 0 & I \\ -I & 0 \end{smallmatrix} \right]$,
where $I$ is the $n \times n$ identity matrix.  Then, writing $A^\dagger$ for
the transpose of the matrix  $A$, we embed
$\Spn(\algK)$ into the general linear group $\GLnn(\algK)$ as $\{ g \in \GLnn(\algK) \colon g^\dagger J g = J\}$.
We denote the symplectic form on $V$ corresponding to $J$ by $\langle x, y \rangle = x^\dagger J y$.

A \emph{partition} $\lambda$ of a positive integer
$n$ is a sequence $(\lambda_1, \cdots, \lambda_t)$ of positive integers in weakly decreasing order
with the property that $\sum_{i=1}^t \lambda_i = n$.  
The $\lambda_i$ are called the \emph{parts} of the partition $\lambda$
and for any $j$, the multiplicity of $j$ in $\lambda$, denoted $m_j^\lambda$ or $m_j$, is the number of parts
$\lambda_i$ such that $\lambda_i =j$.  
Write $\gcd(\lambda)$ for the greatest common divisor of
the parts of $\lambda$.

\subsection{Some $\sltwo(\ratk)$-modules} \label{S:sltwo}
  Consider the basis $\{e,h,f\}$ of $\sltwo(\ratk)$
for which $[h,e]=2e$, $[h,f]=-2f$, and $[e,f]=h$.  For each
positive integer $j$, we can construct a $j$-dimensional
module $W_j$ of $\sltwo(\ratk)$ with basis
$\{e^{j-1}w, e^{j-2}w, \cdots, ew, w\}$,
subject to
$h(e^iw) = (2i+1-j)\; e^i w$, $e^jw = 0$, $fw = 0$, and for $i>0$,
$f(e^iw) = i(j-i) \; e^{i-1}w$.
The $\ratk$-subspace
spanned by $w$ is called the lowest weight space of this module;
in general we refer to the eigenspaces of $h$ as weight spaces.
Write $\pi_j$ for the representation afforded by this module; then
with respect to this basis 
we have
$\pi_j(e) = J_j$, the upper triangular matrix
in Jordan normal form corresponding to a single Jordan block.  We also
have
\begin{align} \label{HrYr}
\pi_j(h) = H_j & = \diag(j-1, j-3, \cdots, -j+1),\\
\notag \pi_j(f) = Y_j & = J_j^\dagger\; \diag(j-1, 2(j-2), \cdots, j-1, 0),
\end{align}
which are diagonal and lower triangular matrices, respectively.

Now let $W$ be 
a finite-dimensional
$\sltwo(\ratk)$-module affording a representation $\pi$ such that there
exists a positive integer $m$ with  $\pi(e)^m=\pi(f)^m=0$.  Then
\cite[Thm 5.4.8]{Carter} if $\ratk$ has characteristic zero or
$\charp > m+1$, $W$ decomposes as a direct sum of irreducible
submodules, each one of which is isomorphic to $(\pi_j,W_j)$ for
some $1 \leq j \leq m$.  The dimensions of these irreducibles,
counted with multiplicity, define a partition $\lambda$ of $\dim(W)$.
Conversely, any partition of a positive integer $n$ 
(such that if $\charp >0$ then
$n+1<\charp$) defines a unique isomorphism class of
$\sltwo(\ratk)$-module.

In particular, choosing bases
for the irreducibles as above, the matrices for $\pi(e)$, $\pi(h)$ 
and $\pi(f)$ take block-diagonal form, with blocks of size 
equal to the parts of $\lambda$, in decreasing order.  
Write $J_\lambda$, $H_\lambda$ and $Y_\lambda$ for these
$n \times n$ matrices; so $J_\lambda$ is the upper triangular
matrix in Jordan normal form corresponding to the partition $\lambda$.

\subsection{Lie triples} \label{SS:lietriples}
Suppose $\ratk$ is a field of characteristic zero,
or of characteristic $\charp > 3(h-1)$ where $h$ is the Coxeter number of $\algG$.
(Recall that $h = n$ for $\algG = \SLn$ and $h=2n$ for $\algG = \Spn$.)  
Let $X$ be a nonzero nilpotent element in $\ratg$.   
Then by the Jacobson-Morozov
Theorem (see, for instance, \cite[\S5.4]{Carter}) there exists a Lie algebra
homomorphism $\phi \colon \sltwo \to \algg$
defined over $\ratk$
such that $\phi(e) = X$; we call $( \phi(f), \phi(h), \phi(e))$ 
(or, by abuse of notation, the map $\phi$)
a \emph{Lie triple} corresponding to $X$.  
 
Furthermore, there exists a homomorphism of algebraic groups $\varphi \colon \mathrm{SL}_2 \to \algG$,
defined over $\ratk$, such that $d\varphi = \phi$.  Define 
a one-parameter subgroup $\lambda$ of $\ratG$ via
$\lambda(t) = \varphi\left( \left[\begin{smallmatrix} t & 0 \\ 0 & t^{-1} \end{smallmatrix} \right] \right)$
for $t \in \ratk^\times$.  Then $d\lambda(1) = \phi(h)$ and $\lambda$
is said to be \emph{adapted} to the Lie triple $(\phi(f), \phi(h), \phi(e))$
\cite[Def 4.5.6]{DeBacker}.

\subsection{Classification of algebraic nilpotent orbits} \label{SS:classical}
Let $\ratk, \algK$ be as above.  The material in this section is adapted from \cite{CMcG} (where the hypothesis $\charp=0$ was assumed).

Let $X$ be a nonzero nilpotent element in $\algg$ and $\phi$ 
a corresponding Lie triple.
Through $\phi$, $V$ is a completely reducible  $\sltwo(\algK)$-module.
%, and this module is defined and completely reducible over $\resk$.
Let $V(j)$ denote
the isotypic component of $V$ of all $j$-dimensional irreducible
submodules; then there is an isomorphism
\begin{align*}
\phi_j \colon L(j) \otimes W_j& \to V(j)\\
v \otimes e^iw &\mapsto X^{i}v,
\end{align*}
where $L(j)$ denotes the subspace of lowest weight vectors of $V(j)$.
Then we may write
\begin{equation} \label{E:decomp}
V = \bigoplus_{j \in \mathbb{N}}V(j) \simeq \bigoplus_{j \in \mathbb{N}} L(j) \otimes W_j.
\end{equation}
If $X'$ is another nilpotent element in the same $\algG$-orbit, 
with associated Lie triple $\phi'$, then by the Jacobson-Morozov Theorem 
there exists an element $g \in \algG$
intertwining $\phi$ and $\phi'$ as representations of $\sltwo(\algK)$ (see, for example,
\cite[Proposition 5.6.4]{Carter}).   It follows that the 
modules \eqref{E:decomp} arising from $X$ and $X'$ are isomorphic, and
thus give rise to the same partition of $\dim(V)$.
% By associating to $X$  the partition of $\dim(V)$ given by the values  $j$ such that $V(j) \neq \{0\}$, with the multiplicity of $j$  set equal to $\dim(L(j))$, we obtain the following
In fact, we have the following
well-known result about nilpotent orbits over $\algK$;
see, for example, \cite[\S 3.1, 5.1]{CMcG}.  

\begin{proposition} \label{P:partitions}
When $\algG = \SLn$, the set of nilpotent adjoint orbits is 
in one-to-one correspondence with the set of partitions of $n$.
When $\algG= \Spn$, the set of nilpotent adjoint orbits is 
in one-to-one correspondence with the set of partitions of 
$2n$ in which all odd parts occur with even multiplicity.
\end{proposition}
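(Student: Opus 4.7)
My plan is to apply the $\sltwo$-theory developed in Sections~\ref{S:sltwo} and \ref{SS:lietriples} to reduce both cases to linear-algebraic questions about $\sltwo(\algK)$-module decompositions of $V$, equipped in the second case with an invariant bilinear form.

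For $\algG = \SLn$: the discussion preceding the statement already shows that the partition $\lambda$ of $n$ recorded by the decomposition \eqref{E:decomp} is an invariant of the $\algG$-orbit of $X$. For surjectivity I would take $X = J_\lambda$ from Section~\ref{S:sltwo} as an explicit nilpotent realizing any prescribed partition $\lambda$. For injectivity, given two nilpotents $X, X'$ with the same associated partition, the resulting $\sltwo(\algK)$-modules on $V$ are isomorphic, and any linear isomorphism realizing this lies in $\GLn(\algK)$; since the center of $\GLn$ acts trivially on $\algg$ under the adjoint action, I can rescale by a central element to obtain an element of $\SLn(\algK)$ intertwining $X$ and $X'$.

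For $\algG = \Spn$: the key new ingredient is that $\phi(\sltwo) \subset \mathfrak{sp}_{2n}(\algK)$, so the symplectic form $\langle\cdot,\cdot\rangle$ on $V$ is $\sltwo$-invariant. I would apply Schur's lemma to the decomposition \eqref{E:decomp} to show that the isotypic components $V(j)$ and $V(k)$ are orthogonal for $j \neq k$, and that the restriction of the form to $V(j) \simeq L(j) \otimes W_j$ factors as a tensor product of a non-degenerate bilinear form on $L(j)$ with the (unique up to scalar) non-degenerate $\sltwo$-invariant bilinear form on $W_j$. A direct calculation with the basis of $W_j$ from Section~\ref{S:sltwo} then shows that the invariant form on $W_j$ is symmetric when $j$ is odd and skew when $j$ is even. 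In order for the global form to be skew, the form on $L(j)$ must be skew when $j$ is odd and symmetric when $j$ is even; since non-degenerate skew forms exist only in even dimension, this forces $m_j = \dim L(j)$ to be even whenever $j$ is odd, giving the claimed parity constraint. Conversely, given any partition $\lambda$ of $2n$ with all odd parts of even multiplicity, I would build a representative by assembling blocks $J_j$ on copies of the standard modules $W_j$ and equipping $V$ with the tensor-product form just described; the parity condition is exactly what guarantees this can be done. For injectivity, two nilpotents sharing the same $\lambda$ produce isomorphic $\sltwo$-modules on $V$ together with non-degenerate bilinear forms of the same symmetry type on each $L(j)$; since all such forms are equivalent over the algebraically closed field $\algK$, I can find an $\sltwo$-module isomorphism preserving $\langle\cdot,\cdot\rangle$, and this corresponds to an element of $\Spn(\algK)$ conjugating the two nilpotents.

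The main obstacle is the representation-theoretic bookkeeping in the symplectic case: verifying the tensor-product factorization of the form on each $V(j)$ and pinning down the symmetry type of the invariant form on $W_j$. The conjugacy of Lie triples extending a fixed nilpotent, already cited in Section~\ref{SS:lietriples}, is what allows me to shift attention from the choice of triple to the intrinsic module-plus-form data; the remaining work is standard linear algebra over $\algK$.
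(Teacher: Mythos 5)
Your proof is correct and follows the standard $\sltwo$-module approach that the paper delegates to \cite[\S 3.1, 5.1]{CMcG}: it uses the same isotypic decomposition \eqref{E:decomp}, the same identification of the symmetry type of the invariant form on each $W_j$, and the same reduction of the classification to forms on the lowest-weight spaces $L(j)$ that the paper itself later develops (over $\ratk$ rather than $\algK$) in the proof of Proposition~\ref{P:sp}. The only point worth noting is that your $\SLn$ injectivity step --- rescaling $g \in \GLn(\algK)$ by a central element to land in $\SLn(\algK)$ --- silently uses that $\algK$ is algebraically closed so that $\det(g)$ has an $n$-th root; this is fine here but is exactly the mechanism that breaks over $\ratk$ and produces the several rational orbits of Proposition~\ref{P:slorbits}.
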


When $\algG$ is quasi-split one deduces from \cite[Theorem 4.2]{Kottwitz}
or \cite{Dokovic} that
every nilpotent $\algG$-orbit which is defined over $\ratk$ contains
a $\ratk$-rational element.  Thus each algebraic nilpotent orbit 
of $\SLn$ and $\Spn$ decomposes into one or more rational
orbits.

More precisely, we may apply the same reasoning as above 
to deduce that nilpotent $\ratG$-orbits are in one-to-one correspondence with
$\ratG$-orbits of $\ratk$-rational Lie triples in $\ratg$; the proofs of the
Jacobson-Morozov theorem and Kostant's theorem in \cite[Chap 3]{CMcG} are
unchanged over $\ratk$.  However, two non-conjugate Lie triples $\phi$ and $\phi'$ 
may give rise
to the same partition $\lambda$;
%to the same decomposition \eqref{E:decomp} of the corresponding module $V$; 
this happens exactly when none of the
endomorphisms from $V$ to $V$ which intertwine the two representations $\phi$ and $\phi'$
of $\sltwo(\ratk)$ lie in the group $\ratG$.  Distinguishing these cases is the subject of the first parts of each of Sections~\ref{S:sl} and \ref{S:sp}.

\subsection{Buildings and Moy-Prasad filtrations}  
Let $\ratk$ be a field of zero or odd characteristic.  We consult
\cite{MoyPrasad} for the theory of Moy-Prasad filtrations.
Let $\ratG = \SLn(\ratk)$ or $\Spn(\ratk)$ so that $\algG$ is
connected and split over $\ratk$.  Let  $\buil(\ratG) = \buil(\algG,\ratk)$ 
denote
the Bruhat-Tits building of $\ratG$.  The building is endowed with a
$\ratG$-action.  For any $x \in \buil(\ratG)$, set $\ratG_x = \{ g \in \ratG \colon g \cdot x = x\}$.  Since $\algG$ is simple and simply-connected, this is a parahoric subgroup of $\ratG$.
Now let $\algT$ be a maximal 
torus of $\algG$ which is $\ratk$-split and set $\ratT = \algT(\ratk)$.
Let $\apart = \apart(\algT) \subset \buil(\ratG)$ be the corresponding
apartment.  Then $\apart$ is the affine space underlying
$X_*(\algT) \otimes_{\mathbb{Z}} \mathbb{R}$
where $X_*(\algT)$ is the group of $\ratk$-rational
cocharacters (one-parameter
subgroups) of $\algT$.   

Let $X^*(\algT)$ be the group of $\ratk$-rational
characters of $\algT$.  
%Given $\alpha \in X^*(\algT)$ and $\mu \in X_*(\algT)$, the composition $\alpha \circ \mu$ is an endomorphism of the multiplicative group, which is therefore given by $t \mapsto t^m$ for some $m = m(\alpha,\mu) \in \mathbb{Z}$. This defines a natural pairing \begin{align*} X^*(\algT) \times X_*(\algT) &\to \mathbb{Z}\\ (\alpha, \mu) &\mapsto m(\alpha,\mu). \end{align*}
Recall there is a natural pairing $X^*(\algT) \times X_*(\algT) \to \mathbb{Z}$, which we denote by $(\alpha, \mu) \mapsto m(\alpha,\mu)$.

Let $\Phi = \Phi(\algG,\algT) \subseteq X^*(\algT)$ be the set of roots of $\algT$
in $\algG$, and let $\Psi = \Psi(\algG,\algT)$ denote the set of affine
roots relative to $\Phi$ and the valuation on $\ratk$.  If we fix an origin, as above, then
we may write
$$
\Psi = \{ \phi + n \colon \phi \in \Phi, n \in \mathbb{Z} \}.
$$
We say $\phi = \dot{\psi}$ is the gradient of $\psi$.   Each element
of $\Psi$ defines a map $\psi \colon \apart \to \mathbb{R}$ by
$\psi(\lambda \otimes s) = \langle \phi + n, \lambda \otimes s \rangle = s m(\phi,\lambda) + n$. 

For $\alpha \in \Phi$, let $\ratg_\alpha$ denote the $\alpha$-root subspace 
of $\ratg$.
To each $\psi \in \Psi$, we associate an $\RR$-submodule $\ratg_\psi$ of the
corresponding root space $\ratg_{\dot{\psi}}$ by choosing an
isomorphism $\gamma \colon \ratk \to \ratg_{\dot{\psi}}$ such
that $\gamma(\RR) = \ratg_{\dot{\psi}} \cap \ratg(\RR)$ and
setting 
$$
\ratg_\psi = \{ X \in \ratg_{\dot{\psi}} \colon \val(\gamma^{-1}(X)) \geq \psi - \dot{\psi}\}.
$$

To each pair $(x,r) \in \buil(\ratG) \times \mathbb{R}$, Moy and Prasad
have associated an $\RR$-subalgebra of $\ratg$, denote $\ratg_{x,r}$, defined
as follows.  Choose an apartment $\apart = \apart(\algT)$ such that $x \in \apart$.
Set $\ratt$ to be the Lie algebra of $\ratT$; define  
$$
%\ratt_r = \ratt \cap \langle \ratg_{\phi + n} \colon \phi \in \Phi, n \geq r \rangle.
\ratt_r = \{ H \in \ratt \colon \val(d\chi(H)) \geq r \; \;\forall \chi \in X^*(\algT)\}.
$$
Then 
$$
\ratg_{x,r} = \ratt_r \oplus \sum_{\psi \in \Psi \colon \psi(x) \geq r} \ratg_{\psi}.
$$
Similarly, we define $\ratt_{r+}$ and in turn $\ratg_{x,r+}$ by replacing each 
inequality above with a strict inequality.  Then $\ratg_{x,r+} = \ratg_{x,s}$
for some $s > r$ depending on $x$.

Explicitly, for $\algG = \SLn$, consider the apartment $\apart$
corresponding to the diagonal torus $\algT$; then identifying
$\apart = X_*(\algT) \otimes_\mathbb{Z} \mathbb{R}$ we 
have
$$
\Phi = \{ e_i - e_j \colon 1 \leq i \neq j \leq n \}
$$
where $e_i(\diag(t^{x_1}, t^{x_2}, \ldots, t^{x_n})\otimes s) \doteq sx_i$.
In particular, for any $a \in \apart$, $\sum_{i=1}^n e_i(a) = 0$.   
Our choice of simple system is the
set of roots $\{ \alpha_i = e_i - e_{i+1} \colon 1 \leq i < n\}$.
Each root space is one-dimensional.  An 
element $X \in \ratg_{e_i-e_j}$,
given in matrix form, has $X_{kl} = 0$ unless $(k,l) = (i,j)$.

Similarly for $\algG = \Spn$ consider the apartment $\apart$
corresponding to the diagonal torus $\algT$.  By our embedding of 
$\Spn$ the elements of $\ratT$
are diagonal matrices of the form $\tau = \diag(t_1, t_2, \ldots, t_n, t_1^{-1}, t_2^{-1}, \ldots, t_n^{-1})$.
Then
working as above, we have
$$
\Phi = \left\{ e_i - e_j, \pm (e_i + e_j), \pm 2e_i \colon 1 \leq i \neq j \leq n \right\},
$$
where here $e_i(\diag(t^{x_1}, \ldots, t^{x_n}, t^{-x_1}, \ldots, t^{-x_n}) \otimes s) \doteq sx_i$.   
The one-dimensional root spaces may be identified in matrix form as follows.  
We have
\begin{align*}
\ratg_{e_i-e_j} &= \left\{ \left[ \begin{matrix} A & 0 \\ 0 & -A^\dagger \end{matrix} \right] \colon
A \in M_{n\times n}(\ratk), A_{kl} = 0 \text{\;unless\;} (k,l)=(i,j) \right\},\\
\ratg_{e_i+e_j} &= \left\{ \left[ \begin{matrix} 0 & A \\ 0 & 0 \end{matrix} \right] \colon
A \in M_{n\times n}(\ratk), A = A^\dagger, A_{kl} = 0 \text{\;unless\;} \{k,l\}=\{i,j\} \right\}, \\
\ratg_{2e_i} &= \left\{ \left[ \begin{matrix} 0 & A \\ 0 & 0 \end{matrix} \right] \colon
A \in M_{n\times n}(\ratk), A_{kl} = 0 \text{\;unless\;} k=l=i \right\}, 
\end{align*}
corresponding to the positive roots, 
and for each $\alpha \in \Phi^+$, $X \in \ratg_{-\alpha}$ if and only if $X^\dagger \in \ratg_{\alpha}$.

\section{DeBacker's parametrization of nilpotent orbits} \label{S:debacker}

All material in this section is summarized from \cite{DeBacker}.  
For the case of split simple groups, the hypotheses of \cite[\S 4.2]{DeBacker}
(under which Lemmas~\ref{L:DB1} and \ref{L:DB2}, and Theorem~\ref{T:debacker}, recalled below, are proven) are satisfied by taking the residual characteristic $\resp > 3(h-1)$.

Let $r \in \mathbb{R}$.  Fix $\apart$, $\Phi$, $\Psi$ as above.  
 For each $\psi \in \Psi$ define
$$
H_{\psi-r} = \{ x \in \apart \colon \psi(x) = r \}.
$$
Given a finite subset $S \subseteq \Psi$, define
$H_S = \bigcap_{\psi \in S} H_{\psi - r}$.
Then any connected component $\facet$ in $H_S$ of
the complement 
$$
H_S \setminus \bigcup_{\psi \in \Psi \setminus S} H_{\psi-r},
$$
for some $S$, is called an $r$-facet of $\apart$.
Denote the smallest affine subspace of $\apart$ containing $\facet$ by $A(\facet,\apart)$ and define $\dim(\facet) = \dim A(\facet,\apart)$.

An $r$-facet has the property that for each $x,y \in \facet$, 
\begin{equation} \label{facet}
\ratg_{x,r} = \ratg_{y,r} \quad \text{and} \quad \ratg_{x,r+} = \ratg_{y,r+}.
\end{equation}
More generally, for each $x \in \buil(\ratG)$, the \emph{generalized $r$-facet}
$\facet^\buil$ containing $x$ is the set of all $y \in \buil(\ratG)$ satisfying \eqref{facet}.  Then $\facet^\buil$ is an open convex subset of $\buil(\ratG)$
whose intersection with any apartment, when nonempty, is an $r$-facet; moreover
each such nonempty intersection has the same dimension.  Call two generalized
$r$-facets $\facet_1^\buil$ and $\facet_2^\buil$ \emph{$r$-associate} if there exists a $g \in \ratG$ and
an apartment $\apart$ such that 
$A(\facet_1^\buil\cap \apart,\apart) = A(g\facet_2^\buil\cap \apart,\apart) \neq \emptyset$.

Now for each $x \in \facet \subset \facet^\buil$, consider the quotient space
$$
V_{\facet^\buil} \doteq V_\facet \doteq V_{x,r} \doteq \ratg_{x,r}/\ratg_{x,r+};
$$
this is naturally a vector space over the residue field $\resk$ with
a $\resk$-linear action by the parahoric subgroup $\ratG_x$.
Note that $V_{x,r}$ is entirely defined by the roots $\psi \in \Psi$ such that $\psi(x)=r$,
which in turn only depends only on $A(\facet, \apart)$.  In fact there
is a natural identification of $V_{\facet_1}$ with $V_{\facet_2}$ whenever 
$\facet_1$ and $\facet_2$ are such that $A(\facet_1, \apart) = A(\facet_2, \apart)$.

Call an element $v \in V_{x,r}$ \emph{degenerate} if there exists
a nilpotent element $X \in \ratg_{x,r}$ mapping to $v$ under the quotient
map.  We consider the set 
$$
I_r^n \doteq \{ (\facet,v) \colon \text{$v$ is a degenerate element of $V_{\facet}$} \}
$$
(or use generalized $r$-facets in place of $r$-facets, as in \cite[Def 5.3.1]{DeBacker}).
Define an equivalence relation $\sim$ on $I_r^n$ via 
$(\facet_1, v_1) \sim (\facet_2,v_2)$ 
if there exists a $g \in \ratG$ such that $A(\facet_1, \apart) = A(g\facet_2, \apart)$
and such that,  under the natural identification of $V_{\facet_1}$ with
$Ad(g)V_{\facet_2}$, the elements $v_1$ and $Ad(g)v_2$ lie in the same 
orbit under $\ratG_x$ for any $x \in \facet_1$.

\begin{lemma} \cite[Corollary 5.2.4]{DeBacker} \label{L:DB1}
Let $(\facet,v) \in I_r^n$, $v \neq 0$.  Extend $v$ to a Lie triple
$$
(w,h,v) \in V_{x,-r} \times V_{x,0} \times V_{x,r}.
$$
Let
$$
(Y,H,X) \in \ratg_{x,-r} \times \ratg_{x,0} \times \ratg_{x,r}
$$
be any lift of this triple to a Lie triple over $\ratk$. 
Then
we have that $\mathcal{O}(\facet,v) \doteq \Ad(\ratG)X$ is the unique nilpotent orbit of
minimal dimension whose intersection with the coset 
$v = X + \ratg_{x,r+}$ is nontrivial.
\end{lemma}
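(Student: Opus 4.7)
The plan is to prove the lemma in three stages: existence of the lift $(Y,H,X)$, independence of the orbit $\Ad(\ratG)X$ from the chosen lift, and the minimality plus uniqueness of its dimension among nilpotent orbits meeting the coset $X + \ratg_{x,r+}$.

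For existence, since $v$ is degenerate, pick any nilpotent $X_1 \in \ratg_{x,r}$ reducing to $v$ and invoke Jacobson-Morozov over $\ratk$ (applicable because $\charp$ is zero or sufficiently large relative to the Coxeter number of $\algG$) to produce some $\sltwo$-triple in $\ratg$. One then modifies this triple by $\Ad(\ratG_{x,0+})$-conjugation so that the neutral and negative elements land in $\ratg_{x,0}$ and $\ratg_{x,-r}$, and reduce to $h$ and $w$ respectively. This proceeds by successive approximation along the Moy-Prasad filtration, each step reducing to a linear equation involving $\ad(H)$ whose solvability comes from the graded $\sltwo$-structure on the associated graded Lie algebra $\bigoplus_s \ratg_{x,sr}/\ratg_{x,sr+}$. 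Equivalently, one may apply Jacobson-Morozov first in this graded Lie algebra over $\resk$ and then choose compatible lifts. Any two such lifts of $(w,h,v)$ differ by an element of $\ratG_{x,0+}$ by a graded Kostant conjugacy statement, so the orbit $\Ad(\ratG)X$ is unambiguously defined.

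The main obstacle, and the substance of the lemma, is the dimension claim. The essential tool is the $\ratk$-rational one-parameter subgroup $\lambda$ adapted to $(Y,H,X)$, as described in Section~\ref{SS:lietriples}. Because $H$ reduces to $h \in V_{x,0}$, the cocharacter $\lambda$ interacts predictably with both the Moy-Prasad grading at $x$ and the $\ad(H)$-weight grading on $\ratg$. Given any nilpotent $X'' = X+Z$ with $Z \in \ratg_{x,r+}$, one argues that, after possibly conjugating by an element of $\ratG_{x,0+}$ to put $Z$ in good position, iterating the adjoint action of $\lambda$ contracts $X''$ toward $X$, pushing the perturbation into arbitrarily deep pieces of the Moy-Prasad filtration. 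This places $X$ in the Zariski closure of $\Ad(\ratG)X''$, hence $\dim \Ad(\ratG)X \leq \dim \Ad(\ratG)X''$, with equality only when the two orbits coincide (since a nilpotent orbit closure contains no distinct orbit of equal dimension). This simultaneously yields both minimality and uniqueness.

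The hardest step is justifying the contraction rigorously: one must verify that $\lambda$ interacts with the Moy-Prasad filtration at $x$ in precisely the right way to push every perturbation $Z \in \ratg_{x,r+}$ into deeper filtration pieces without disturbing $X$, and then translate this filtration-theoretic statement into a genuine closure statement for nilpotent orbits. The compatibility between adapted one-parameter subgroups and parahoric filtrations needed for this, together with the related analysis of nilpotent orbit closures, is the content of the preparatory work in \cite{DeBacker}.
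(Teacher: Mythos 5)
This statement is quoted as \cite[Corollary 5.2.4]{DeBacker}; the paper offers no proof of its own, so there is no in-paper argument to compare your proposal against. What I can do is check your sketch against what DeBacker's argument must accomplish.

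Your existence and lift-independence outline is consistent with DeBacker's Section 4 machinery (the graded lifting result you invoke is essentially his Proposition 4.3.2, which this paper itself cites later in the proof of Theorem~\ref{T:slcorrespondence}). The concerns are in the minimality-and-uniqueness step. First, a technical imprecision: $\Ad(\lambda(t))X = t^2 X$, so $\lambda$ does not fix $X$, and the one-parameter family $\Ad(\lambda(t))(X+Z)$ does not converge to $X$ in the $p$-adic topology — it converges to $0$. One repairs this by renormalizing: $t^{-2}\Ad(\lambda(t))(X+Z) \to X$ in the Zariski topology over $\algK$, and since nilpotent $\algG$-orbits over $\algK$ are cones, $t^{-2}\Ad(\algG)(X+Z) = \Ad(\algG)(X+Z)$, giving $X \in \overline{\Ad(\algG)(X+Z)}$ and hence the dimension inequality.

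The more serious gap is the passage from algebraic to rational uniqueness. The closure argument above, together with your observation that an orbit closure contains no distinct orbit of equal dimension, yields uniqueness of the minimal-dimensional \emph{algebraic} orbit meeting the coset. But the lemma asserts uniqueness among \emph{rational} $\ratG$-orbits, and distinct $\ratG$-orbits inside a single $\algG$-orbit have equal dimension, so no dimension argument can separate them. To finish one must additionally show that any nilpotent $X''$ in the coset with $\Ad(\algG)X'' = \Ad(\algG)X$ is actually $\ratG_{x,0+}$-conjugate to $X$. This requires a second use of the filtration-compatible lifting of $\sltwo$-triples: produce for $X''$ a Lie triple in $\ratg_{x,-r}\times\ratg_{x,0}\times\ratg_{x,r}$ reducing to the fixed degenerate triple $(w,h,v)$, and then invoke the $\ratG_{x,0+}$-conjugacy of such lifts. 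As written, your proposal delegates this to ``preparatory work in \cite{DeBacker}'' without flagging that it is a separate and essential step rather than a detail of the contraction.
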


If $v=0$, define $\mathcal{O}(\facet,v) = \{0\}$.  Let $\nilpk$ denote
the set of rational nilpotent orbits in $\ratg$.

\begin{lemma} \cite[Lemma 5.3.3]{DeBacker}
\label{L:DB2} The map
\begin{align*}
\gamma \colon I_r^n/\sim &\to \nilpk\\
(\facet,v) &\mapsto \mathcal{O}(\facet,v),
\end{align*}
is  well defined and surjective.
\end{lemma}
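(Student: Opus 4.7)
The plan is to handle the two claims separately, in each case reducing to the characterization of $\mathcal{O}(\facet, v)$ provided by Lemma~\ref{L:DB1}.

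For well-definedness, suppose $(\facet_1, v_1) \sim (\facet_2, v_2)$, witnessed by $g \in \ratG$ and an apartment $\apart$. The condition $A(\facet_1, \apart) = A(g\facet_2, \apart)$ implies, via \eqref{facet} and the $\ratG$-equivariance of the Moy-Prasad filtration, that $\ratg_{x,r} = \Ad(g)\ratg_{y,r}$ and likewise at level $r+$, for any $x \in \facet_1 \cap \apart$ and any $y$ in $\facet_2$ with $gy = x$. Hence $v_1$ and $\Ad(g) v_2$ are cosets in the same quotient $\ratg_{x,r}/\ratg_{x,r+}$, and by hypothesis they lie in a single $\ratG_x$-orbit. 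Choosing lifts $X_1, X_2 \in \ratg$ and some $k \in \ratG_x$ with $X_1 + \ratg_{x,r+} = \Ad(kg)X_2 + \ratg_{x,r+}$, the cosets coincide; since $\Ad(\ratG)$ permutes rational nilpotent orbits, the unique minimal-dimensional nilpotent orbit meeting this coset is the same in either case, and Lemma~\ref{L:DB1} yields $\mathcal{O}(\facet_1, v_1) = \mathcal{O}(\facet_2, v_2)$.

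For surjectivity, fix a nonzero orbit $\mathcal{O} \in \nilpk$ and $X \in \mathcal{O}$. By the Jacobson-Morozov theorem recalled in Section~\ref{SS:lietriples}, produce a Lie triple $(Y, H, X)$ in $\ratg$ together with an adapted one-parameter subgroup $\lambda$ of $\ratG$. Pick a maximal $\ratk$-split torus $\algT$ of $\algG$ whose cocharacter lattice contains $\lambda$, work in $\apart = \apart(\algT)$, and using the identification $\apart \simeq X_*(\algT) \otimes_\mathbb{Z} \mathbb{R}$ (with a chosen origin) define $x = \frac{r}{2}\lambda \in \apart$. Under the $\lambda$-weight grading of $\ratg$, the elements $X$, $H$, $Y$ have weights $2$, $0$, $-2$, and the explicit description of $\ratg_{x,s}$ via affine roots $\psi$ evaluated at $x$ gives $(Y, H, X) \in \ratg_{x,-r} \times \ratg_{x,0} \times \ratg_{x,r}$. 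Let $\facet$ be the $r$-facet through $x$ and $v$ the image of $X$ in $V_\facet$. Then $v$ is degenerate (witnessed by $X$ itself), the images of $(Y, H, X)$ form a Lie triple in the graded vector space because the bracket is degree-additive, and Lemma~\ref{L:DB1} applied to the lift $(Y, H, X)$ yields $\mathcal{O}(\facet, v) = \Ad(\ratG) X = \mathcal{O}$. The case $X = 0$ is trivial: any $\facet$ with $v = 0$ works by definition.

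The main obstacle is the construction of the point $x$ from the adapted cocharacter in such a way that the chosen Lie triple lies in the expected graded pieces of the Moy-Prasad filtration. This is where the compatibility between the cocharacter description of the apartment and the filtration indexing is essential, and it is the reason one needs an adapted $\lambda$ rather than just the semisimple element $H$. Once this alignment is in place, everything else follows from the minimality characterization of Lemma~\ref{L:DB1}.
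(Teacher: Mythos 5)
The paper does not give its own proof of this lemma: it is cited directly as \cite[Lemma 5.3.3]{DeBacker}. So I am judging your argument on its own terms and against DeBacker's approach.

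Your treatment of well-definedness is essentially correct: the equivalence $\sim$ is designed so that $\Ad(kg)$ carries the coset defining $v_2$ onto the coset defining $v_1$, and since the adjoint action preserves rational nilpotent orbits and their dimensions, the minimal orbit meeting the two cosets (the characterization from Lemma~\ref{L:DB1}) must agree. One small imprecision: $\ratg_{x,r}$ need not literally equal $\Ad(g)\ratg_{y,r}$; what is true is that the quotients $V_{\facet_1}$ and $\Ad(g)V_{\facet_2}$ are naturally identified when $A(\facet_1,\apart)=A(g\facet_2,\apart)$, which is what the argument actually uses.

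The surjectivity argument, however, has a genuine gap precisely at the step you flag as ``the main obstacle.'' Setting $x=\tfrac{r}{2}\lambda$ relative to an arbitrary origin does not give $(Y,H,X)\in\ratg_{x,-r}\times\ratg_{x,0}\times\ratg_{x,r}$. The $\lambda$-weight grading controls which \emph{root spaces} the components of $X$ and $Y$ lie in, but the Moy--Prasad condition $\psi(x)\geq r$ with $\psi=\alpha+n$ and $\alpha(\lambda)=2$ reads $r+n\geq r$, i.e.\ $n\geq 0$, so one also needs the root-space components of $X$ to have \emph{nonnegative valuation} with respect to the chosen origin — and dually for $Y$. Nothing in the hypotheses ensures this for an arbitrary representative $X\in\mathcal{O}$ and an arbitrary origin (e.g.\ replacing $X$ by a scalar-shifted representative in the same rational orbit easily breaks it). The fix — and what DeBacker actually does — is to first establish that $\buil_r(Y,H,X)$ is nonempty. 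This rests on the nontrivial fact that the image $\varphi(\mathrm{SL}_2(\RR))$ under the lifted homomorphism $\varphi$ is bounded and hence fixes a point of $\buil(\ratG)$, giving a point $y_0\in\buil_0(Y,H,X)$; then Proposition~\ref{P:shiftr} produces $x=y_0+\tfrac{r}{2}\lambda\in\buil_r(Y,H,X)$. With such an $x$ your remaining steps (passing to the $r$-facet through $x$, projecting $X$ to $v$, and invoking Lemma~\ref{L:DB1}) go through, but the nonemptiness of $\buil_r(Y,H,X)$ cannot be bypassed and is not a formal consequence of the weight grading.
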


The map $\gamma$ is not bijective; see Example~\ref{exampledist} for
an interesting example.

Given a Lie triple $(Y,H,X)$ over $\ratk$, set
$$
\buil(Y,H,X) \doteq \buil_r(Y,H,X) \doteq \{ x \in \buil(\ratG) \colon Y \in \ratg_{x,-r}, X \in \ratg_{x,r} \}.
$$
This is a nonempty closed convex subset of $\buil(\ratG)$ which is
the union of generalized $r$-facets, such that any two generalized
$r$-facets of maximal dimension in $\buil(Y,H,X)$ are $r$-associate.
Moreover, given $(\facet,v)$ and an associated triple $(Y,H,X)$ as
in Lemma~\ref{L:DB1} we have that $\facet^\buil \subset \buil(Y,H,X)$.
 
\begin{definition} \label{D:distinguished}
The degenerate pair $(\facet,v)$ is \emph{distinguished} if 
$\facet^\buil$ is a generalized $r$-facet of maximal dimension
in $\buil(Y,H,X)$, where $(\facet,v)$ and $(Y,H,X)$ are related as in Lemma~\ref{L:DB1}.
\end{definition}

Denote the set of distinguished pairs by $I_r^d$, and the restriction
of $\gamma$ to $I_r^d/\sim$ by $\gamma_d$.

\begin{theorem} \cite[Theorem 5.6.1]{DeBacker} \label{T:debacker}
The map $\gamma_d \colon (I_r^d/\sim) \to \nilpk$ is a bijection.
\end{theorem}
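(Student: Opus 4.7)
My approach is to verify surjectivity and injectivity of $\gamma_d$ separately. For surjectivity, given a nonzero orbit $\mathcal{O}\in\nilpk$, I would select $X\in\mathcal{O}$ together with a Jacobson--Morozov Lie triple $(Y,H,X)$ over $\ratk$. Since $\buil(Y,H,X)$ is a nonempty union of generalized $r$-facets, choose a generalized $r$-facet $\facet^\buil\subseteq\buil(Y,H,X)$ of maximal dimension and a base point $x\in\facet$. Then $Y\in\ratg_{x,-r}$, $X\in\ratg_{x,r}$, and (using $H=[X,Y]$) $H\in\ratg_{x,0}$, so the triple descends to a Lie triple $(w,h,v)\in V_{x,-r}\times V_{x,0}\times V_{x,r}$. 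The delicate point is to verify $v\neq 0$: otherwise $X\in\ratg_{x,r+}$, and moving $x$ an infinitesimal amount along the one-parameter subgroup $\lambda$ adapted to $(Y,H,X)$ produces nearby points still lying in $\buil(Y,H,X)$ but inside a strictly larger-dimensional generalized $r$-facet, contradicting maximality. Granting $v\neq 0$, the pair $(\facet,v)\in I_r^d$ is distinguished by construction, and Lemma~\ref{L:DB1} applied to the lift $(Y,H,X)$ of $(w,h,v)$ identifies $\gamma_d(\facet,v)=\Ad(\ratG)X=\mathcal{O}$.

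For injectivity, suppose $(\facet_i,v_i)\in I_r^d$ both map to $\mathcal{O}$, and lift each to a Lie triple $(Y_i,H_i,X_i)$ via Lemma~\ref{L:DB1}, with $X_i\in\mathcal{O}$. Using the freedom built into $\sim$ to act by $\ratG$, I would first conjugate so that $X_1=X_2=X$, and then invoke a rational form of Kostant's theorem on conjugacy of Jacobson--Morozov triples sharing a fixed nilpotent element to further replace $(\facet_2,v_2)$ by an equivalent pair with $(Y_2,H_2)=(Y_1,H_1)$. Both $\facet_i^\buil$ are now generalized $r$-facets of maximal dimension in the common set $\buil(Y,H,X)$, so the stated $r$-associate property yields $g\in\ratG$ and an apartment $\apart$ with $A(\facet_1\cap\apart,\apart)=A(g\facet_2\cap\apart,\apart)$. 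Under the natural identification $V_{\facet_1}\simeq V_{g\facet_2}$ one then checks that $v_1$ and $\Ad(g)v_2$ lie in a common $\ratG_x$-orbit, either by exhibiting both as images of the same $X$ (after absorbing a further element of the parahoric into $g$) or by combining Lemma~\ref{L:DB1} with a uniqueness statement for Lie triples over the residue field.

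The principal obstacle is the nonvanishing statement $v\neq 0$ in the surjectivity step, which is the only place where the maximality of $\dim\facet^\buil$ is used and which requires a genuinely geometric deformation along the adapted cocharacter together with concavity of the Moy--Prasad valuations $y\mapsto\sup\{s\colon X\in\ratg_{y,s}\}$. A close second is the closing step of the injectivity argument, ensuring that distinct $\ratG_x$-orbits on degenerate vectors of $V_\facet$ give rise to distinct rational orbits of $\ratg$; this is where the hypothesis $\resp>3(h-1)$ enters essentially, since it is needed both for Kostant's theorem over $\ratk$ and for its residue-field analogue to govern lifting and conjugacy on both sides.
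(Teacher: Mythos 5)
The paper does not prove this statement; it is quoted verbatim from DeBacker \cite[Theorem~5.6.1]{DeBacker}, and all of Section~\ref{S:debacker} is explicitly a summary of that source. So there is no in-paper proof to compare against. Your sketch does, however, track the broad shape of DeBacker's actual argument in \cite{DeBacker}: surjectivity via constructing a distinguished pair from a Lie triple and a maximal generalized $r$-facet in $\buil(Y,H,X)$, and injectivity via conjugating two preimages to share a common Lie triple (rational Kostant) and then invoking the $r$-associativity of maximal generalized $r$-facets in $\buil(Y,H,X)$.

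Two points worth flagging. First, in the surjectivity step your nonvanishing claim ($v\neq 0$) is the right crux, but the argument you offer for it --- a ``small perturbation along $\lambda$ increases $\dim\facet^\buil$'' --- is not yet a proof. In DeBacker this is handled through a careful study of the descent of the Lie triple to the reductive quotient (\cite[Lemmas 5.1.3, 5.2.1--5.2.4]{DeBacker}); Proposition~\ref{P:shiftr} of the present paper shows that translating along $\frac{r}{2}\lambda$ moves $\buil_r$ to $\buil_0$ but does not by itself force $v\neq 0$ at a maximal facet, and making your concavity heuristic rigorous is exactly the hard technical content. Second, the closing step of injectivity --- after $r$-associating the facets, showing $v_1$ and $\Ad(g)v_2$ lie in one $\ratG_x$-orbit --- cannot be brushed off with ``absorb a further element of the parahoric''; one genuinely needs the residue-field analogue of Kostant/Pommerening conjugacy (hence the hypothesis $\resp>3(h-1)$) applied in the reductive quotient $\ratG_x/\ratG_{x+}$, and then a lifting statement back to $\ratG_x$. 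As written your proposal names the right ingredients but leaves both decisive verifications as gaps; since this paper cites rather than proves the theorem, that is acceptable context, but the sketch would not stand alone as a proof.
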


The following results, which are implicit in \cite{DeBacker}, are key
to establishing our correspondence.

\begin{proposition} \label{P:mine}
The equivalence class of $(\facet,v)$ is distinguished if and 
only if $(\facet,v)$ is maximal, in terms of the dimension of
$\facet$, among all degenerate pairs occurring in $\gamma^{-1}(\mathcal{O}(\facet,v))$.
\end{proposition}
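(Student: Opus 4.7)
The plan is to establish both directions by translating the dimension condition on $\gamma^{-1}(\mathcal{O}(\facet,v))$ into a dimension condition on generalized $r$-facets contained in $\buil(Y,H,X)$, for a Lie triple $(Y,H,X)$ lifting $(\facet,v)$ as in Lemma~\ref{L:DB1}. The ingredients assembled just before the statement are the key inputs: $\facet^\buil \subset \buil(Y,H,X)$ with $\dim(\facet^\buil) = \dim(\facet)$, and any two generalized $r$-facets of maximal dimension in $\buil(Y,H,X)$ are $r$-associate.

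For the forward direction, I would take $(\facet,v)$ distinguished with a lift $(Y,H,X)$ witnessing this, then consider an arbitrary $(\facet',v') \in \gamma^{-1}(\mathcal{O}(\facet,v))$ with its own Lie triple lift $(Y',H',X')$. By Lemma~\ref{L:DB1} the endpoints $X$ and $X'$ are $\ratG$-conjugate, and the $\ratk$-rational Jacobson-Morozov and Kostant theorems recalled in Section~\ref{SS:classical} then produce $\sigma \in \ratG$ with $\Ad(\sigma)(Y',H',X') = (Y,H,X)$. The image $\sigma\facet'$ then lies in $\buil(Y,H,X)$ and has the same dimension as $\facet'$, so distinguishedness of $(\facet,v)$ forces $\dim(\facet') \leq \dim(\facet^\buil) = \dim(\facet)$.

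For the reverse direction, I would argue the contrapositive. If $(\facet,v)$ is not distinguished, then for some lift $(Y,H,X)$ the set $\buil(Y,H,X)$ contains a generalized $r$-facet $\facet_0^\buil$ with $\dim(\facet_0) > \dim(\facet)$. Picking any $x_0 \in \facet_0$ and setting $v_0 = X + \ratg_{x_0,r+} \in V_{x_0,r}$, the triple $(Y,H,X)$ itself serves as a lift associated to $v_0$: the inclusions $Y \in \ratg_{x_0,-r}$ and $X \in \ratg_{x_0,r}$ come from $x_0 \in \buil(Y,H,X)$, and $H = [X,Y] \in \ratg_{x_0,0}$ follows from compatibility of the Moy-Prasad filtration with the Lie bracket. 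Lemma~\ref{L:DB1} then yields $\gamma(\facet_0,v_0) = \Ad(\ratG)X = \mathcal{O}(\facet,v)$, exhibiting a degenerate pair in $\gamma^{-1}(\mathcal{O}(\facet,v))$ of strictly larger $r$-facet dimension and contradicting maximality.

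The main subtlety I foresee is in the forward direction: one must produce the conjugating element $\sigma$ inside $\ratG$ rather than merely in $\algG$. This is exactly what the rational versions of Jacobson-Morozov and Kostant provide under the residual characteristic hypotheses of \cite[\S 4.2]{DeBacker}, and it is precisely what lets us compare $\facet'$ against the same $\buil(Y,H,X)$ relative to which distinguishedness of $(\facet,v)$ was asserted.
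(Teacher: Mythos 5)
Your forward direction is correct and mirrors the paper's core idea: use the $\ratk$-rational Kostant theorem to conjugate the two Lie triples into the same $\buil(Y,H,X)$, then compare $r$-facet dimensions using the maximality built into the definition of distinguished.

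Your reverse direction, however, has a gap. You set $v_0 = X + \ratg_{x_0,r+}$ and then apply Lemma~\ref{L:DB1}, but that lemma carries the hypothesis $v \neq 0$, which you never verify. Membership $x_0 \in \buil(Y,H,X)$ only forces $X \in \ratg_{x_0,r}$, not $X \notin \ratg_{x_0,r+}$; if $v_0 = 0$ then $\gamma(\facet_0,v_0) = \{0\}$ and the contradiction you want evaporates. The needed nonvanishing does hold, but it requires an argument: if $X \in \ratg_{x_0,r+}$ while $Y \in \ratg_{x_0,-r}$ then $H = [X,Y] \in \ratg_{x_0,0+}$; yet $H$ is a nonzero toral element lying in the Lie algebra of a split torus whose apartment contains $x_0$ (because $\buil(Y,H,X)$ sits inside the building of the centralizer of the adapted one-parameter subgroup, as recalled in the proof of Proposition~\ref{P:shiftr}), and under the residual characteristic hypothesis its nonzero integer eigenvalues are units, so $H \notin \ratg_{x_0,0+}$, a contradiction. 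The paper avoids this issue entirely: it takes a representative $(\facet_0,v_0)$ of $\gamma_d^{-1}(\mathcal{O})$, whose existence Theorem~\ref{T:debacker} supplies, conjugates the Lie triples so that both $r$-facets lie inside a single $\buil(Y_0,H_0,X_0)$, and extracts both directions from the resulting inequality $\dim(\facet)\le\dim(\facet_0)$ together with the $r$-associativity of maximal generalized $r$-facets.
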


At issue is that the maximality of $\facet$ in $\buil(Y,H,X) \cap \apart$ 
does not imply maximality of $\facet^\buil$ in $\buil(Y,H,X)$; see 
Example~\ref{exampledist}. 

\begin{proof}[Proof of Proposition~\ref{P:mine}]
Let $\mathcal{O} \in \nilpk$ and let $(\facet,v)$ represent 
an element of $\gamma^{-1}(\mathcal{O})$.  Let $(\facet_0,v_0)$
be a representative of $\gamma_d^{-1}(\mathcal{O})$.  
Let $(Y,H,X)$ and $(Y_0,H_0,X_0)$ be Lie triples corresponding
to $(\facet,v)$ and $(\facet_0,v_0)$, respectively.  Since
they represent the same orbit, these Lie triples are conjugate
under $\ratG$, so there exists a $g\in\ratG$
such that 
$$
\buil(Y,H,X) = \buil(Ad(g)Y_0, Ad(g)H_0,Ad(g)X_0) = g\buil(Y_0,H_0,X_0).
$$
Consequently the maximal generalized $r$-facets in $\buil(Y,H,X)$
and $\buil(Y_0,H_0,X_0)$ have the same dimension,
which is equal to $\dim(\facet_0)$ by hypothesis.  It follows
that $\dim(\facet) \leq \dim(\facet_0)$.  If equality
holds then $Ad(g^{-1})\facet$ and $\facet_0$ are $r$-associate by
their maximality in $\buil(Y_0,H_0,X_0)$; thus $(\facet,v) \sim (\facet_0,v_0)$
since they represent also the same orbit. 

\end{proof}

\begin{proposition} \label{P:shiftr} 
Let $(Y,H,X)$ be a Lie triple in $\ratg$, and let $\lambda$ be
a one-parameter subgroup adapted to $(Y,H,X)$.  Then we have
$$
\buil_r(Y,H,X) = \buil_0(Y,H,X) + \frac{r}{2}\lambda.
$$
In particular the sum is well-defined.
\end{proposition}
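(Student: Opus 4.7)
The plan exploits the adapted condition $d\lambda(1)=H$, which makes the $\mathrm{ad}(H)$-eigenspace decomposition of $\ratg$ coincide with the grading by weights of $\lambda$. Concretely, on any root space $\ratg_\phi$ relative to a $\ratk$-split maximal torus $\algT$ containing $\lambda(\mathbb{G}_m)$, the operator $\mathrm{ad}(H)$ acts as the integer $m(\phi,\lambda)$. Since $[H,X]=2X$ and $[H,Y]=-2Y$, only roots $\phi$ with $m(\phi,\lambda)=2$, respectively $-2$, contribute to the root-space decompositions of $X$ and of $Y$, and their $\ratt$-components vanish.

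The core computation is then: for any $x\in\apart(\algT)$ and any $\psi\in\Psi$ with gradient $\phi$,
$$\psi\bigl(x+\tfrac{r}{2}\lambda\bigr) \;=\; \psi(x) + \tfrac{r}{2}\,m(\phi,\lambda).$$
On the $+2$-weight space the shift is exactly $+r$, so each root-space component $X_\phi$ of $X$ satisfies $X_\phi\in\ratg_{x+\frac{r}{2}\lambda,\,r}$ if and only if $X_\phi\in\ratg_{x,\,0}$; on the $-2$-weight space the shift is $-r$, giving $Y_\phi\in\ratg_{x+\frac{r}{2}\lambda,\,-r}$ if and only if $Y_\phi\in\ratg_{x,\,0}$. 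Summing over $\phi$ produces the set-level equivalence $x+\tfrac{r}{2}\lambda\in\buil_r(Y,H,X)\iff x\in\buil_0(Y,H,X)$, which is the desired equality.

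For this to be meaningful one must verify that every $x\in\buil_0(Y,H,X)$ lies in an apartment $\apart(\algT)$ with $\algT\supset\lambda(\mathbb{G}_m)$. This follows because $H=[X,Y]\in [\ratg_{x,0},\ratg_{x,0}]\subset\ratg_{x,0}$ is a semisimple element of this parahoric Lie algebra, so standard Bruhat--Tits theory produces a $\ratk$-split maximal torus $\algT$ with $H\in\ratt$ and $x\in\apart(\algT)$; the image of $\lambda$ is then automatically contained in $\algT$ as the unique cocharacter whose derivative at $1$ is $H$.

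The main obstacle, and the content of the last sentence of the proposition, is showing that the translate $x+\tfrac{r}{2}\lambda$ does not depend on the particular choice of such $\algT$. The cleanest resolution is to realize integer translation by $\lambda$ as the isometry of $\buil(\ratG)$ induced by the group element $\lambda(\p)\in\ratG$: on any apartment containing $\lambda(\mathbb{G}_m)$ it restricts to translation by the vector corresponding to $\lambda$. Hence $x+n\lambda=\lambda(\p)^n\cdot x$ is intrinsic for $n\in\mathbb{Z}$, and after rescaling $\lambda$ the same argument handles rational, and by continuity real, multiples, consistently with the conventions of \cite[\S4.5]{DeBacker}.
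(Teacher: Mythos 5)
Your core computation — writing $\psi(x+\tfrac{r}{2}\lambda)=\psi(x)+\tfrac{r}{2}\,m(\dot\psi,\lambda)$, observing that only roots with $\dot\psi(\lambda)=\pm 2$ contribute to $X$ and $Y$, and deducing that the half-space conditions translate exactly by $r$ — is the same as the paper's and is correct.

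Where you diverge from the paper is in establishing well-definedness, and there your argument has real gaps. The paper invokes \cite[Cor.~4.4.2]{DeBacker} (that $\buil(\ratM)=\buil(\ratG)^{\lambda(\RR^\times)}$ for $\ratM=C_{\ratG^0}(\lambda)$) together with \cite[Cor.~4.5.9]{DeBacker} (that $\buil_r(Y,H,X)\subseteq\buil(\ratM)$) to conclude that any $x\in\buil_0(Y,H,X)$ lies in an apartment of $\buil(\ratM)$, and hence in $\apart(\algT)$ for a maximal $\ratk$-split torus $\algT$ of $\algM$, which automatically contains the central torus $\lambda(\mathbb{G}_m)$. Your replacement of this step has two problems. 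First, you assert that because $H\in\ratg_{x,0}$ is semisimple, ``standard Bruhat--Tits theory'' produces a $\ratk$-split maximal torus $\algT$ with $H\in\ratt$ \emph{and} $x\in\apart(\algT)$. This is not a standard fact; $H$ being a split semisimple element lying in $\ratg_{x,0}$ does not by itself put $H$ in the Lie algebra of a split torus whose apartment contains $x$ — it is precisely the content of DeBacker's Corollary~4.5.9 (which uses the full Lie-triple hypothesis, not just $H\in\ratg_{x,0}$) that forces $x$ to lie in $\buil(\ratM)$. Second, the justification ``the image of $\lambda$ is then automatically contained in $\algT$ as the unique cocharacter whose derivative at $1$ is $H$'' is not a valid argument: there is no a priori cocharacter of $\algT$ with derivative $H$, and the uniqueness you allude to is only meaningful once you already know $\lambda\in X_*(\algT)$. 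The correct reason is that $\lambda(\mathbb{G}_m)$ is central in $\algM=C_{\algG^0}(\lambda)$ and hence is contained in any maximal torus of $\algM$; this requires first knowing $\algT\subset\algM$, which again comes from DeBacker's corollaries rather than from $H\in\ratt$. Your concluding observation that integer translation by $\lambda$ is the isometry induced by $\lambda(\p)$ is a nice complement (it addresses independence of the chosen apartment, a point the paper leaves implicit), but it does not repair the existence gap above.
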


\begin{proof}
This is the content of \cite[Remark 5.1.5]{DeBacker}.
Set $\ratM =  C_{\ratG^0}(\lambda)$ and let $\algM$ be the Levi subgroup of $\algG$, defined over $\ratk$, such that $\algM(\ratk) = \ratM$.  
By \cite[Corollary 4.4.2]{DeBacker},
 $\buil(\ratM) = \buil(\ratG)^{\lambda(\RR^\times)}$ and
by \cite[Corollary 4.5.9]{DeBacker}, for any $r \in \mathbb{R}$ and
$x \in \buil(\ratG)$, one has $\buil_r(Y,H,X) \subseteq \buil(\ratM)$.
Hence for any $x \in \buil_0(Y,H,X)$ there exists 
a $\ratk$-split torus $\algT \subset \algM$ so
that $x \in \apart(\algT)$ and $\lambda \in X_*(\algT)$, so the sum
is well-defined in $\apart(\algT)$.

To see the equality, note first that for any such $\apart$ and $x\in \apart$, $x \in \buil_r(Y,H,X)$
is equivalent to 
$$
X \in \sum_{\substack{\psi \colon \psi(x)\geq r\\ \dot{\psi}(\lambda)=2}}\ratg_{\psi}
\;\text{and}\; Y  \in \sum_{\substack{\psi \colon \psi(x)\geq r\\ \dot{\psi}(\lambda)=2}}\ratg_{-\psi}.
$$
Hence the result follows by noting that
$$
\{ \psi \colon \psi(x)\geq 0, \; \dot{\psi}(\lambda)=2\} = \{ \psi \colon \psi(x+\frac{r}{2}\lambda)\geq r,\; \dot{\psi}(\lambda)=2 \}.
$$
% $$ \sum_{\substack{\psi \colon \psi(x)\geq 0\\ \dot{\psi}(\lambda)=2}}\ratg_{\psi}  = \sum_{\substack{\psi \colon \psi(x+\frac{r}{2}\lambda)\geq r\\ \dot{\psi}(\lambda)=2}}\ratg_{\psi}. $$
%Hence given a Lie triple $(Y,H,X)$ we have $X \in \ratg_{x,0}$ if and only if $X \in \ratg_{x+\frac{r}{2}\lambda,r}$, and similarly for $Y$. 

\end{proof}

\begin{corollary} \label{C:justone}
Let $(Y,H,X)$, $\lambda$ be as above.
If $\facet_r \subset \apart$ corresponds to a maximal generalized $r$-facet
of $\buil_r(Y,H,X)$, then there exists $x \in \facet_r$ and an $s$-facet $\facet_s'$ such that 
$x + \frac{s-r}{2}\lambda \in \facet_s'$ and  $\facet_s'$ corresponds to a maximal
generalized $s$-facet in $\buil_s(Y,H,X)$.
\end{corollary}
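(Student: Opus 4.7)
The plan is to use Proposition~\ref{P:shiftr} as a rigid identification between $\buil_r(Y,H,X)$ and $\buil_s(Y,H,X)$ inside a suitable apartment, and then to choose $x \in \facet_r$ generically so that $y = x + \frac{s-r}{2}\lambda$ lands in a maximal $s$-facet. I would begin by invoking the proof of Proposition~\ref{P:shiftr} to select an apartment $\apart = \apart(\algT)$ containing $\facet_r$ with $\lambda \in X_*(\algT)$, so that the translation $T \colon z \mapsto z + \frac{s-r}{2}\lambda$ is well-defined on $\apart$ and carries $\buil_r(Y,H,X) \cap \apart$ bijectively onto $\buil_s(Y,H,X) \cap \apart$.

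For $x \in \facet_r$, set $y = T(x)$ and let $\facet_s'$ be the $s$-facet of $\apart$ containing $y$. I would first verify $\facet_s' \subset \buil_s(Y,H,X)$: by the computation in the proof of Proposition~\ref{P:shiftr}, membership in $\buil_s \cap \apart$ is cut out by the inequalities $\psi(z) \geq s$ for $\psi$ appearing in the decomposition of $X$ (with $\dot\psi(\lambda) = 2$) together with $(-\psi)(z) \leq s$ for $\psi$ appearing in that of $Y$. Each such inequality corresponds to a hyperplane $H_{\phi-s}$ with $\phi \in \Psi$, which the $s$-facet $\facet_s'$ either contains or avoids entirely; since the inequality holds at $y$, connectedness of $\facet_s'$ propagates it throughout.

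The core step is to verify that the generalized $s$-facet $(\facet_s')^\buil$ is of maximal dimension in $\buil_s(Y,H,X)$. For this I would use a two-sided genericity argument. In one direction, $T(\facet_r)$ is an open subset of the affine subspace $A(\facet_r, \apart) + \frac{s-r}{2}\lambda$ of dimension $d := \dim \facet_r$; only finitely many $s$-hyperplanes cross this shifted subspace transversally, so a generic choice of $x \in \facet_r$ ensures $y$ avoids all of them, giving $\dim \facet_s' \geq d$. In the other direction, starting from any maximal $s$-facet $\facet_s^*$ in $\buil_s(Y,H,X)$ and applying the same genericity argument to $T^{-1}$ produces an $r$-facet in $\buil_r$ of dimension at least $\dim \facet_s^*$; maximality of $\facet_r$ then forces $\dim \facet_s^* \leq d$. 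Chaining the inequalities yields $d \leq \dim \facet_s' \leq \dim \facet_s^* \leq d$, so $(\facet_s')^\buil$ is indeed maximal. The main obstacle is this genericity claim: the translation $T$ sends $H_{\psi-r}$ to $H_{\psi-s}$ only when $\dot\psi(\lambda) = \pm 2$, while hyperplanes with other $\lambda$-weights move to hyperplanes at levels other than $s$, so one must verify carefully that the finite set of $s$-hyperplanes meeting $T(\facet_r)$ transversally really can be avoided by a choice of $x$, and that the symmetric reverse assertion holds as well.
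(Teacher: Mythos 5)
Your proof is correct and follows the paper's approach: translate $\facet_r$ by $\frac{s-r}{2}\lambda$ via Proposition~\ref{P:shiftr}, then argue by genericity that a typical $x\in\facet_r$ is carried to a point lying in a maximal $s$-facet. You make the paper's terse ``by dimension'' step explicit by bounding $\dim\facet_s'$ from both sides (running the genericity argument through $T$ and $T^{-1}$), whereas the paper instead appeals to the assertion that maximality of $(\facet_r)^\buil$ forces every affine root $\psi$ with $\psi\equiv r$ on $\facet_r$ to satisfy $\dot\psi(\lambda)=2$; your two-sided version is a slightly more careful rendering of the same idea.
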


\begin{proof}
By Proposition~\ref{P:shiftr} we have that $\facet_r + \frac{s-r}{2}\lambda \subset \buil_s(Y,H,X) \cap \apart$ and by dimension we have that this subset must meet a maximal $s$-facet in an open nonempty set.  More precisely, given $x \in \facet_r$, maximality implies that the only $\psi \in \Psi$ for
which $\psi(x)=r$ are among those for which $\dot{\psi}(\lambda)=2$; in which case we have
% if $\psi \in \Psi$ satisfies $\psi(x)=r$ and $\dot{\psi}(\lambda)=2$ then 
$\psi(x+\frac{s-r}{2}\lambda) = s$.  Therefore $x+\frac{s-r}{2}\lambda$ will lie in a maximal $s$-facet if and only if
%It thus suffices to choose $x \in \facet_r$ so that 
for all other $\psi$, $\psi(x) \neq s - \frac{s-r}{2}\dot{\psi}(\lambda)$.  In particular, the set of such $x$ is dense in $\facet_r$.  

\end{proof}

Corollary~\ref{C:justone} shows that given a Lie
triple $(Y,H,X)$,  
%with adapated one-parameter subgroup $\lambda$, 
identifying a maximal $r$-facet in $\buil_r(Y,H,X)$ for just
one value of $r$ determines the $s$-associativity class of $s$-facets 
which are maximal in $\buil_s(Y,H,X)$, for any $s$.
Thus, in Sections~\ref{S:sl} and \ref{S:sp}
it suffices to establish our correspondence for values of $r \in \mathbb{R}\setminus \mathbb{Q}$,
for example.

The following example illustrates the difficulties one needs to
address to explicitly realize the correspondence.

\begin{example} \label{exampledist}
Consider $\algG = \mathrm{Sp}_4$ and choose $r \in \mathbb{R} \setminus \mathbb{Q}$.  Consider  the rational
nilpotent orbit represented by
$$
X_1 = \left[ \begin{smallmatrix}
0 & 0 &  0 & 1 \\ 0 & 0 & 1 & 0
\\ 0 & 0 & 0 & 0 \\ 0 & 0 & 0 & 0  \end{smallmatrix} \right].
$$
Then the orbit $\ratG \cdot X_1$ contains also a representative of
the form
$$
X_0 = \left[ \begin{smallmatrix}
0 & 0 &  1 & 0 \\ 0 & 0 & 0 & t
\\ 0 & 0 & 0 & 0 \\ 0 & 0 & 0 & 0  \end{smallmatrix} \right].
$$
where $-t \in \RR^{\times 2}$.
Complete these to Lie triples $(X_1, H, Y_1)$ and $(X_0,H,Y_0)$
with $H = \diag(1,1,-1,-1)$.
Then we have
\begin{align*}
\buil(Y_1,H,X_1) \cap \apart &= H_{e_1+e_2-r}\\
\buil(Y_0,H,X_0) \cap \apart &= H_{2e_1-r} \cap H_{2e_2-r}.
\end{align*}
Choose $\facet_1 \in \buil(Y_1,H,X_1) \cap \apart$ and $\facet_0 \in \buil(Y_0,H,X_0) \cap \apart$
with $\dim(\facet_i)=i$, so that each g$\facet_i$ is maximal in 
$\buil(Y_i,H,X_i) \cap \apart$, and set 
\begin{align*}
v_1 &= X_1+ \ratg_{\facet_1+} \in V_{\facet_1}\\ 
v_0 &= X_0 + \ratg_{\facet_0+} \in V_{\facet_0}.
\end{align*}
By Lemma~\ref{L:DB1} we conclude that 
$$
\mathcal{O} \doteq \mathcal{O}(\facet_1,v_1) = \Ad(\ratG)X_1 = \Ad(\ratG)X_0 = \mathcal{O}(\facet_0,v_0).
$$
It follows that $(\facet_0,v_0) \in \gamma^{-1}(\mathcal{O})$ and that
$\facet_0$
 is a maximal $r$-facet in $\buil(Y_0,H,X_0) \cap \apart$
but that it does NOT correspond to a maximal generalized $r$-facet in $\buil(Y_0,H,X_0)$.
Hence $(\facet_0,v_0) \notin I_r^d$.

On the other hand we may conclude that $(\facet_1,v_1) \in I_r^d$, as follows.
Since every $r$-associativity class necessarily
meets the apartment $\apart$, then if $(\facet_1,v_1)$ 
were not distinguished, then we would have
$$
2 \geq \dim(\buil(Y_1,H,X_1)) > \dim(\buil(Y_1,H,X_1) \cap \apart) = 1.
$$
Now if $\dim(\buil(Y_1,H,X_1)) = 2 = \dim(\apart)$ it would follow that there exists 
another representative $X' \in \mathcal{O}$ such that for a
corresponding Lie triple $(X', H', Y')$, we have 
$\buil(Y',H',X') \cap \apart \supset \facet'$ with $\facet'$
an $r$-alcove.  However, for $r$ non-integral $V_{\facet'} = \{0\}$; hence the unique corresponding 
orbit under the DeBacker correspondence (for any $r$)
is the trivial one.   This contradicts the choice of $X'$.  Hence
$\dim(\buil(Y_1,H,X_1))=1$ and $(\facet_1,v_1)$ is distinguished.
\end{example}

\begin{remark}
As we'll see in Sections~\ref{S:sl} and \ref{S:sp}, Lemma~\ref{L:DB1} implies that given a good choice of representative of a nilpotent orbit $\mathcal{O}$ it is relatively easy to identify elements of $\gamma^{-1}(\mathcal{O})$.  The
difficult step in identifying $\gamma_d^{-1}(\mathcal{O})$
 is the necessity of verifying that
one has indeed chosen the $r$-associate class of maximal dimension.
As Example~\ref{exampledist} illustrates, to identify maximal
generalized $r$-facets in $\buil(Y,H,X)$ (and hence distinguished pairs)
it does not suffice 
to consider maximal $r$-facets of 
$\buil(Y,H,X) \cap \apart$ for an given apartment
$\apart$.

Conversely, suppose we begin with a list of representatives of all the 
classes of pairs $(\facet,v)$ with $\facet \in \apart$.  To identify
the distinguished pairs, apply Proposition~\ref{P:mine} and 
work inductively downwards on the
dimensions of the facets:  for each pair $(\facet,v)$, use
 Lemma~\ref{L:DB1} to determine
$\mathcal{O} = \mathcal{O}(\facet,v)$; if $\mathcal{O}$ has
not been obtained from any previous pair, then the pair $(\facet,v)$
is distinguished.   

One difficulty in this latter approach seems to be to generate a list
of  
representatives
$I_r^d$.  The number of
classes of facets varies with $r$ and, as DeBacker notes, identifying
orbits of distinguished elements in $V_\facet$ is not easy in general;
choosing $r$ irrational is helpful here, as it increases the number of 
equivalence classes of facets while decreasing the number of orbits
associated to each class of facets.
\end{remark}

\section{The rational nilpotent orbits of $\SLn(\ratk)$} \label{S:sl}

\subsection{Partition-type parametrization of rational nilpotent orbits of $\SLn$}

Let $\ratk$ be as in Section~\ref{SS:lietriples}, $\algG = \SLn$ and 
$\lambda$ be a partition of $n$ with corresponding nilpotent $\algG$-orbit 
$\mathcal{O}_\lambda$.
One representative of $\mathcal{O}_\lambda$ is $J_\lambda$; this representative is clearly $\ratk$-rational.

Suppose that $X, X' \in \mathcal{O}_\lambda(\ratk)$ and let
$\phi$ and $\phi'$ be corresponding $\ratk$-rational Lie triples.
Then there exists $g \in \GLn(\ratk)$
which 
conjugates $\phi$ to $\phi'$; so $g$ preserves the
direct sum decomposition \eqref{E:decomp}, as well as each of the weight
spaces $V(j)$.   Write $g_j$ for the restriction
of $g$ to $L(j)$.
On each nonzero $V(j) \simeq L(j) \otimes W_j$, we have
$g(v \otimes u) = (g_j v) \otimes u$ for all $u \in W_j$, 
so $\det(g\vert_{V(j)}) = \det(g_j)^j$. 
Taking $g_j$ to be the identity
if $L(j) = \{0\}$, we have
$\det(g) = \prod_j (\det(g_j))^j$.
This product takes values only in $(\ratk^\times)^m$, where $m = \gcd(j \colon L(j) \neq \{0\}) = \gcd(\lambda)$, whence the following result.

\begin{proposition} \label{P:slorbits}
Let $\lambda$ be a partition of $n$ and set $m = \gcd(\lambda)$.  For any
$d \in \ratk^\times$, define
$D(d) = \diag(1,1,\ldots,1,d)$.
\begin{enumerate}
\item For each $d \in \ratk^\times$, the matrix $J_\lambda D(d)$ represents a $\ratk$-rational orbit in $\mathcal{O}_\lambda(\ratk)$, and conversely every orbit has a
representative of this form.
\item The $\SLn(\ratk)$-orbits represented by $J_\lambda D(d)$ and $J_{\lambda'}D(d')$
coincide if and only if $\lambda = \lambda'$ and $d \equiv d'$ in $\ratk^\times/(\ratk^\times)^m$.
\end{enumerate}
Thus there are exactly $\vert \ratk^\times/(\ratk^\times)^m\vert$  rational orbits of $\SLn(\ratk)$ in $\mathcal{O}_\lambda(\ratk)$.
\end{proposition}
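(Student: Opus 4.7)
The plan is to combine the determinant calculation sketched in the paragraphs preceding the statement with an explicit analysis of the matrices $J_\lambda D(d)$, yielding a bijection between $\ratk^\times/(\ratk^\times)^m$ and the set of $\SLn(\ratk)$-orbits in $\mathcal{O}_\lambda(\ratk)$ that sends the coset of $d$ to the orbit of $J_\lambda D(d)$.

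For part (1), the essential point is the matrix identity $D(d)^{-1} J_\lambda D(d) = J_\lambda D(d)$: right-multiplication by $D(d)$ scales the last column of $J_\lambda$ by $d$ (this column has a single nonzero entry in row $n-1$ when the smallest part of $\lambda$ is at least $2$, and vanishes entirely otherwise, in which case $m=1$ and everything is trivial), while left-multiplication by $D(d)^{-1}$ leaves $J_\lambda$ alone since its last row is zero. As $D(d)\in\GLn(\ratk)$, this shows $J_\lambda D(d)\in\mathcal{O}_\lambda(\ratk)$. To see that every $\ratG$-orbit in $\mathcal{O}_\lambda(\ratk)$ contains such a representative, I would invoke the classical fact that two rational nilpotent matrices of the same Jordan type are $\GLn(\ratk)$-conjugate: given $X\in\mathcal{O}_\lambda(\ratk)$, pick $h\in\GLn(\ratk)$ with $hXh^{-1}=J_\lambda$ and set $d\doteq\det(h)$; then $g\doteq D(d)^{-1}h$ lies in $\SLn(\ratk)$ and satisfies $gXg^{-1}=D(d)^{-1}J_\lambda D(d)=J_\lambda D(d)$.

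For part (2), suppose $g\in\SLn(\ratk)$ conjugates $J_\lambda D(d)$ to $J_\lambda D(d')$; using the identity from part (1), the element $h\doteq D(d')gD(d)^{-1}$ lies in $Z_{\GLn(\ratk)}(J_\lambda)$ and has determinant $d'/d$. By Jacobson-Morozov (valid under the characteristic hypothesis of Section~\ref{SS:lietriples}), $Z_{\GLn(\ratk)}(J_\lambda)$ is the semidirect product of its unipotent radical with the centralizer of a fixed rational Lie triple $\phi_\lambda$ extending $J_\lambda$, and unipotents have trivial determinant; so after adjusting $h$ by a unipotent in $Z_{\GLn(\ratk)}(J_\lambda)$ the paragraph preceding the proposition applies and gives $d'/d=\det(h)\in(\ratk^\times)^m$. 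Conversely, given $d'/d\in(\ratk^\times)^m$, the identification $Z_{\GLn(\ratk)}(\phi_\lambda)\simeq\prod_j\mathrm{GL}(L(j))(\ratk)$ combined with B\'ezout's identity applied to the parts of $\lambda$ (whose $\mathbb{Z}$-span equals $m\mathbb{Z}$) produces an $h$ in this centralizer with $\det(h)=d'/d$; then $g\doteq D(d')^{-1}hD(d)$ lies in $\SLn(\ratk)$ and conjugates $J_\lambda D(d)$ to $J_\lambda D(d')$. The main obstacle is bridging the gap between $Z_{\GLn(\ratk)}(J_\lambda)$ and the centralizer of a Lie triple extending $J_\lambda$, which is handled cleanly by the Jacobson-Morozov Levi decomposition; once this is in place, the pre-proposition calculation and B\'ezout do the remaining work, and the count of $|\ratk^\times/(\ratk^\times)^m|$ rational orbits is immediate.
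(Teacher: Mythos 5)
Your proof is correct and follows essentially the same route as the paper: reduce to $J_\lambda$ via $\GLn(\ratk)$, fix the determinant with $D(d)^{-1}$, and for part (2) compute determinants on the centralizer $Z_{\GLn(\ratk)}(\phi_\lambda)\simeq\prod_j\GLn[m_j](\ratk)$ exactly as in the paragraph preceding the proposition. The one place you are more explicit than the paper is in passing from $Z_{\GLn(\ratk)}(J_\lambda)$ to $Z_{\GLn(\ratk)}(\phi_\lambda)$ via the Levi (unipotent-radical) decomposition — the paper compresses this by invoking the $\ratk$-version of Kostant's theorem (bijection between $\ratG$-orbits of nilpotents and of Lie triples) established in Section~\ref{SS:classical}, which is the same idea packaged differently.
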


\begin{proof}
%The first part was proven above.
%For the second part, 
Let $X \in \mathcal{O}_\lambda(\ratk)$ and set $X' = J_\lambda$.
Then there exists   $g \in \GLn(\ratk)$ such
that  $gXg^{-1} = J_\lambda$. Set $d = \det(g)$; then
$D(d^{-1})g \in \SLn(\ratk)$ and 
$$(D(d^{-1})g)\;X\;(D(d^{-1})g)^{-1} = J_\lambda D(d).$$  
Now let $X = J_\lambda D(d)$ and $X' = J_{\lambda'}D(d')$; these
are nilpotent elements in $\ratg$.  Choose corresponding Lie triples $\phi$ and 
$\phi'$ respectively.  That $\lambda = \lambda'$ follows from Proposition~\ref{P:partitions} and rest from the discussion preceding Proposition~\ref{P:slorbits}.
 \end{proof}

It follows immediately, for instance, that if the smallest part of
the partition $\lambda$ defining the algebraic orbit of $X$ is $1$,
then the algebraic orbit contains a unique rational
orbit.   
More generally, we note that $\gcd(\lambda)$ divides $n$ and all divisors of $n$
occur for some partition $\lambda$.

\subsection{Correspondence with the DeBacker parametrization} \label{S:slncorrespondence}

We now determine the $r$-associativity class to which
each nilpotent orbit is to be associated via the DeBacker correspondence.
Assume now additionally  that if $\charp = 0$ then the residual characteristic
$\resp$ is greater than $3(h-1)$.

Given a partition 
$\lambda = (\lambda_1, \lambda_2, \cdots, \lambda_t)$ of $n$ and 
$D = \diag(d_1,d_2,\cdots, d_n) \in \ratT$,
set 
$m = \gcd(\lambda)$ and define the set
$$
I_\lambda = \{1,2,3,\cdots, n\} \setminus \{ \lambda_1, \lambda_1+\lambda_2, \cdots, \sum_i\lambda_i = n\}.
$$ 
This set identifies the nonzero entries of the matrix of $X = J_\lambda D$:
the value $d_{i+1}$ is the $(i,i+1)$st entry, for each $i \in I_\lambda$;
all others are zero.  Recall that we denote the simple roots of $\ratG$ by $\alpha_i=e_i-e_{i+1}$.

%the nonzero components of $X = J_\lambda D$ are the $(i,i+1)$ entries of the matrix for $i \in I_\lambda$, and $\vert I_\lambda \vert = \rank(X)$.

\begin{theorem} \label{T:slcorrespondence}
Let $\lambda$ and $D$ be as above.
Define
\begin{equation} \label{flambda}
H_{\lambda,D} = \bigcap_{i \in I_\lambda} H_{\alpha_i + \val(d_{i+1}) - r}
\quad \subset \apart
\end{equation}
and let $\facet$ be any maximal $r$-facet in $H_{\lambda,D}$.  We have
$X = J_\lambda D \in \ratg_\facet$; set $v$ to be its image in
$V_\facet$.  Then $(\facet,v) \in I_r^d$ and $\mathcal{O}(\facet,v) = \Ad(\ratG)X$.
\end{theorem}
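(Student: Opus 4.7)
The plan has three parts: confirm that $X = J_\lambda D$ represents $v$ at the correct Moy--Prasad level, provide a rational Lie triple $(Y,H,X)$ so that Lemma~\ref{L:DB1} yields the orbit, and establish distinguishedness via Proposition~\ref{P:mine} by a uniform dimension bound across all apartments.

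The filtration-level check is direct: $X = \sum_{i \in I_\lambda} d_{i+1} E_{i,i+1}$, with each summand lying in the affine root subspace $\ratg_{\alpha_i + \val(d_{i+1})}$. The defining equations of $H_{\lambda,D}$ force each such affine root to equal $r$ on $\facet$, so $X \in \ratg_{x,r}$ with nonzero reduction $v$. To produce a Lie triple, set $H := H_\lambda$ and $Y := g^{-1} Y_\lambda g$, where $g \in \GLn(\ratk)$ is the diagonal matrix determined by $g_{s+1} = 1$ at the start of each Jordan block of $\lambda$ and $g_{j+1} = g_j d_{j+1}$ for $j \in I_\lambda$. Then $gXg^{-1} = J_\lambda$, and direct computation yields $(Y)_{i+1,i} = (Y_\lambda)_{i+1,i}/d_{i+1}$ of valuation $-\val(d_{i+1})$, placing $Y$ in $\ratg_{x,-r}$ with nonzero reduction (the integer $(Y_\lambda)_{i+1,i}$ being a unit of $\RR$ under our standing characteristic hypotheses). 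The reductions of $(Y,H,X)$ form a Lie triple in $V_{x,-r} \times V_{x,0} \times V_{x,r}$ extending $v$, so Lemma~\ref{L:DB1} yields $\mathcal{O}(\facet,v) = \Ad(\ratG) X$. The dimension $\dim \facet = t-1$ (where $t$ is the number of parts of $\lambda$) then follows since the gradients $\{\alpha_i : i \in I_\lambda\}$ form a subset of the $n-1$ simple roots of $\SLn$, hence are linearly independent, so $H_{\lambda,D}$ has codimension $|I_\lambda| = n-t$ in the apartment.

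The main obstacle is the upper bound: as Example~\ref{exampledist} demonstrates, $\buil(Y',H',X') \cap \apart'$ need not witness the maximum dimension within $\buil(Y',H',X')$, so I need to bound $\dim \facet'$ uniformly across apartments. My plan uses the combinatorics of $A_{n-1}$. For any $(\facet',v') \in \gamma^{-1}(\Ad(\ratG) X)$, associate a Lie triple $(Y',H',X')$ via Lemma~\ref{L:DB1} and choose an apartment $\apart' \ni \facet'$ whose torus $\ratT'$ contains $H'$; this is possible because $\buil(Y',H',X') \subseteq \buil(C_\ratG(H'))$ and $H'$ lies in the center of $C_\ratG(H')$. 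Decompose $X' = \sum_{\alpha \in S} c_\alpha E_\alpha$ relative to $\ratT'$; the matched Lie triple conditions force $\alpha(x) = r - \val(c_\alpha)$ for every $\alpha \in S$, cutting out an affine subspace of codimension equal to the rank of $S$ in $X^*(\ratT') \otimes \mathbb{R}$. Interpreting $S$ as an edge set on $\{1,\ldots,n\}$ via $e_i - e_j \mapsto \{i,j\}$, this rank equals $n - c$, where $c$ is the number of connected components (isolated vertices included). Because $X'$ has Jordan type $\lambda$ and is block-upper-triangular along these components, each component contributes at least one of the $t$ Jordan blocks, so $t \geq c$. Hence codim $\geq n - t$ and $\dim \facet' \leq t - 1$, which by Proposition~\ref{P:mine} completes the proof of distinguishedness.
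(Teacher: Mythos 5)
Your first two parts (verifying the filtration level of $X = J_\lambda D$ and building the Lie triple $(Y,H,X)$ with $H = H_\lambda$, $Y$ diagonally conjugate to $Y_\lambda$ so that Lemma~\ref{L:DB1} applies) match the paper's proof essentially step for step; the only cosmetic difference is that the paper writes $Y = D^{-1}Y_\lambda$ directly rather than via your $g$.

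Your argument for distinguishedness, however, is genuinely different from the paper's. After reducing to an apartment whose torus contains $H'$, you interpret the roots appearing in $X'$ as an edge set on $\{1,\dots,n\}$, observe that the rank of this set is $n - c$ where $c$ is the number of connected components, and then bound $c$ from above by the number $t=|\lambda|$ of Jordan blocks of $X'$ (each component being an $X'$-invariant subspace contributing at least one block). The paper instead stays in the fixed apartment $\apart$, picks one nonzero entry per row of $X_0$ to obtain a directed graph, and rules out closed cycles by exploiting irrationality of $r$ (after reducing to irrational $r$ via Corollary~\ref{C:justone}). Your Jordan-block count is a nice structural alternative to the paper's irrationality trick: it turns the rank estimate into an invariant statement about $\lambda$ rather than a genericity argument, and it would apply verbatim for any non-integer $r$ once the lift is set up. Incidentally, your description of $X'$ as ``block-upper-triangular'' along components should be block-\emph{diagonal}.

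There is one genuine gap, at the step ``the matched Lie triple conditions force $\alpha(x)=r-\val(c_\alpha)$ for every $\alpha\in S$.'' Membership $X'\in\ratg_{x,r}$ only gives $\alpha(x)+\val(c_\alpha)\geq r$; equality does not follow from the Lie-triple relation or from $Y'\in\ratg_{x,-r}$ in general. The paper handles this by first reducing to $r\notin\mathbb{Z}$ (via Corollary~\ref{C:justone}), noting that then $\ratt_r/\ratt_{r+}=0$, and invoking DeBacker's Proposition 4.3.2 to \emph{choose} a lift $X_0\in\bigoplus_{\psi\colon\psi(x)=r}\ratg_\psi$ that still extends to a rational Lie triple; roots with $\psi(x)>r$ are simply absent from the chosen lift. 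Your argument needs the same maneuver, since roots with strict inequality do not constrain $A(\facet',\apart')$ at all, so without discarding them (or showing they cannot occur) the inequality $\dim\facet'\leq c-1$ is not established. Once you insert the reduction to non-integral $r$ and the choice of lift in $\bigoplus_{\psi(x)=r}\ratg_\psi$, the rest of your Jordan-block argument closes the proof.
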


\begin{proof}
As noted previously, the zero orbit corresponds to the $r$-associate class of
any $r$-alcove, so we may assume from now on that $X \neq 0$ and so $I_\lambda \neq \emptyset$.
Set $N = \vert I_\lambda \vert$; then $N = \rank X$.

Since the set $\{ \alpha_i \colon i \in I_\lambda \} \subseteq \Phi$ 
is linearly independent, $H_{\lambda,D}$ is nonempty and has dimension $n-1-\vert I_\lambda \vert$.
It is a union of $r$-facets; moreover, all maximal $r$-facets $\facet$ contained 
in $H_{\lambda,D}$ satisfy  $A(\facet,\apart) =H_{\lambda,D}$, and so are strongly $r$-associate.

Let $\facet$ be such a facet and  $x \in \mathcal{F}$.  Then recall
$$
\ratg_{x,r} = \ratt_r \oplus \sum_{\psi \colon \psi(x) \geq r} \ratg_\psi.
$$
By construction, $X = J_\lambda D \in  \bigoplus_{i \in I_\lambda}  \ratg_{\alpha_i +\val(d_{i+1})}$,
and $\alpha_i(x) +\val(d_{i+1}) = r$, so $X \in \ratg_{x,r}$.  Let $v$ be the image of $X$
in $V_{x,r} = \ratg_{x,r}/\ratg_{x,r+}$.

A Lie triple corresponding to $X$ is 
$(Y,H,X)$ with $H = H_\lambda$ and $Y = D^{-1} Y_\lambda$.
%; see Section~\ref{S:sltwo}.
By the hypotheses on $\resp$, the
valuations of the nonzero entries of $H_\lambda$ and $Y_\lambda$ are all zero.
Thus $H_\lambda \in \ratt_0 \subset \ratg_{x,0}$.
Moreover, since $\val(d^{-1})=-\val(d)$
we deduce
$$
Y = D^{-1}Y_\lambda \in  \bigoplus_{i \in I_\lambda} \ratg_{-\alpha_i-\val(d_{i+1})},
$$
which is in turn  a subset of $\ratg_{x,-r}$.  Let $(w,h,v) \in V_{x,-r} \times V_{x,0} \times V_{x,r}$
be the image of $(Y,H,X) \in \ratg_{x,-r} \times \ratg_{x,0} \times \ratg_{x,r}$; then
these two Lie triples correspond as in Lemma~\ref{L:DB1}, and $\mathcal{O}(\facet,v) = \Ad(\ratG)X$.

Suppose now $(\facet_0,v_0) \in I_r^n$ is another pair with $\facet_0 \subset \apart$ such that
$\mathcal{O}(\facet_0,v_0) = \Ad(\ratG)X$.  We claim that $\dim(\facet_0) \leq n-1-N$,
from which we may conclude that $(\facet,v) \in I_r^d$ by maximality of dimension.  By Corollary~\ref{C:justone}, it suffices to show this for just one choice of $r$; let $r \in \mathbb{R} \setminus \mathbb{Q}$.

Let $x \in \facet_0$.
Let $(w_0,h_0,v_0) \in V_{x,-r} \times V_{x,0} \times V_{x,r}$ be a Lie triple
corresponding to $v_0$.  
By \cite[Proposition 4.3.2]{DeBacker}, we have that \emph{any} lift of $v_0$ 
to an element $X_0$ in $\ratg_{x,r}$ (and lying in the $2$-weight space 
defined by an adapted one-parameter subgroup) 
extends to a lift $(Y_0,H_0,X_0)$ of the Lie triple $(w_0,h_0,v_0)$.
% In particular, we may assume that (UNDEFINED) $$ \rank_\resk(v_0)= \rank_\ratk(X_0) = \rank_\ratk(X) = m \geq 1,$$ where this latter equality comes from $X_0 \in \Ad(\ratG)X$. 
Furthermore, since $r \notin \Z$ we have $\ratt_r/\ratt_{r+} = \{0\}$.
Thus we may choose a lift $X_0$ such that
\begin{equation} \label{E:lift}
X_0 \in \bigoplus_{\psi \colon \psi(x) = r} \ratg_{\psi}.
\end{equation}
Since $\rank X_0 = \rank X = N$, there exist at least $N$ rows of the matrix
of $X_0$ with nonzero, non-diagonal entries.  Permuting the indices
as necessary, we may assume these nonzero entries correspond to affine
roots
$$
\psi_i = e_i - e_{j_i} + n_i, \quad 1 \leq i \leq N < n, \;1 \leq j_i \leq n, \; i \neq j_i
$$
for some $n_i \in \mathbb{Z}$.
By \eqref{E:lift} we have that $\psi_i(x)=r$ for each $i$,
for each $x \in \facet_0$.  Thus any $x \in \facet_0$ satisfies the 
inhomogeneous 
system of $N+1$ linear equations in $n$ unknowns
\begin{align} \label{E:sys2}
(e_i - e_{j_i})(x) &= r -n_i, \quad 1 \leq i \leq N,\\
\notag \sum_{i=1}^n e_i(x) &= 0.
\end{align}
% Note that by including this last equation we may tacitly work in a larger space where the $e_i$ are linearly independent.  
To conclude the desired result, it suffices to show that this system 
is not overdetermined.

For suppose it was.  Then there would
exist $c_1, c_2, \ldots, c_N, c$, not all zero, such
that
\begin{equation} \label{E:sys}
\sum_{i=1}^N c_i (e_i - e_{j_i}) + c\sum_{i=1}^ne_i = 0.
\end{equation}
Form a directed graph $\Gamma$ with vertices equal to the set of indices
$\{1,2,\ldots, n\}$ and an edge from $i$ to $j_i$ for each $i \in \{1,2,\ldots, N\}$.  It is an exercise in linear algebra to see that the system \eqref{E:sys}
has rank $N+1-k$ where $k$ is the number of distinct closed cycles
of $\Gamma$, and the characteristic functions $f$ of the closed cycles
parametrize a basis for the space of solutions to \eqref{E:sys}
by setting $c_i = f(i)$.
Let $B$ denote the set of vertices in one cycle.  Then the corresponding
basic solution of \eqref{E:sys} is $\sum_{i \in B} (e_i - e_{j_i})(x)=0$.
Together with \eqref{E:sys2} this implies
$$
0 = \sum_{i \in B} (e_i - e_{j_i})(x) \equiv  \vert B \vert r \mod \mathbb{Z},
$$
which contradicts the choice of $r \notin \mathbb{Q}$.  Hence $k = 0$ and we  
conclude that \eqref{E:sys2} has a solution space of dimension 
$n-1-N$.  Thus $\dim(\facet_0) = n-1-N$, as required.
 \end{proof}

One immediate corollary of the proof is the following result.

\begin{corollary} \label{C:dimsl}
If $(\facet,v)  \in I_r^d$ satisfies $\mathcal{O}(\facet,v) \subset \mathcal{O}_\lambda(\ratk)$ then $$\dim \facet = \vert \lambda \vert - 1,$$
where $\vert \lambda \vert$ denotes the number of parts, counted with multiplicity, in the partition $\lambda$.
\end{corollary}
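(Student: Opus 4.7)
The plan is to read off the dimension from the explicit description of maximal $r$-facets given in the proof of Theorem~\ref{T:slcorrespondence}, and then invoke Theorem~\ref{T:debacker} to transfer this to an arbitrary distinguished representative.

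First I would reduce to the case of a preferred representative. By Proposition~\ref{P:slorbits}, every rational orbit contained in $\mathcal{O}_\lambda(\ratk)$ has a representative of the form $X = J_\lambda D(d)$ for some $d \in \ratk^\times$. Applying Theorem~\ref{T:slcorrespondence} to such an $X$, we obtain a distinguished pair $(\facet_0, v_0) \in I_r^d$ with $\mathcal{O}(\facet_0, v_0) = \Ad(\ratG)X$ and $A(\facet_0, \apart) = H_{\lambda, D(d)}$.

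Next comes the dimension count, which is the heart of the matter but is essentially already done inside the proof of Theorem~\ref{T:slcorrespondence}. Since $\{\alpha_i : i \in I_\lambda\}$ is a subset of the simple roots of $\SLn$, it is linearly independent, so the intersection of the $|I_\lambda|$ affine hyperplanes $H_{\alpha_i + \val(d_{i+1}) - r}$ inside $\apart$ (which has dimension $n-1$) has codimension exactly $|I_\lambda|$. By definition $I_\lambda$ is obtained from $\{1, 2, \ldots, n\}$ by removing the $t$ partial sums $\lambda_1, \lambda_1+\lambda_2, \ldots, \sum_i \lambda_i$, where $t = |\lambda|$, so $|I_\lambda| = n - |\lambda|$ and hence
$$\dim \facet_0 \;=\; (n-1) - |I_\lambda| \;=\; |\lambda| - 1.$$

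To finish I would invoke Theorem~\ref{T:debacker}: any distinguished pair $(\facet, v) \in I_r^d$ with $\mathcal{O}(\facet, v) \subset \mathcal{O}_\lambda(\ratk)$ must be $\sim$-equivalent to one of the pairs $(\facet_0, v_0)$ produced above (take $d$ so that $J_\lambda D(d)$ represents the same rational orbit). Because the equivalence relation $\sim$ on $I_r^n$ acts by $g \in \ratG$ sending affine spans of facets to affine spans of facets, it preserves the dimension, so $\dim \facet = \dim \facet_0 = |\lambda|-1$. I do not anticipate a serious obstacle; the corollary is really just the observation that the dimension computation in the proof of Theorem~\ref{T:slcorrespondence} depends only on the partition $\lambda$, not on the diagonal matrix $D(d)$ which distinguishes rational orbits inside $\mathcal{O}_\lambda(\ratk)$.
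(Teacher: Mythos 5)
Your proposal is correct and follows essentially the same route as the paper's own proof: both rest on the dimension count in the proof of Theorem~\ref{T:slcorrespondence} (the paper phrases it as $\dim\facet = n-1-\rank(X)$ with $\rank J_\lambda = n - |\lambda|$, while you phrase it as $\dim\facet_0 = (n-1)-|I_\lambda|$ with $|I_\lambda| = n - |\lambda|$, and these quantities coincide since $N = |I_\lambda| = \rank X$ there). Your explicit final step invoking Theorem~\ref{T:debacker} to transfer the dimension to an arbitrary distinguished pair via the dimension-preserving relation $\sim$ is a point the paper leaves implicit, but it is the same reasoning.
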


\begin{proof}
We saw in the proof of Theorem~\ref{T:slcorrespondence} that 
if $X \in \mathcal{O}(\facet,v)$, then $\dim(\facet) = n-1-\rank(X)$.
When $\mathcal{O}(\facet,v)$ is a rational orbit of $\mathcal{O}_\lambda$, 
we have $\rank X = \rank J_\lambda = n - \vert \lambda \vert$.
 \end{proof}

\begin{remark}
Recall from \cite{CMcG} that $\dim_K(\mathcal{O}_\lambda)=n^2-\vert \lambda \vert^2 - \sum_{i=2}^n\left(\sum_{j \geq i} m_j\right)^2$, where
$m_i$ denotes the multiplicity of $i$ in $\lambda$.  Thus there is
no direct relationship between $\dim(\mathcal{O}(\facet,v))$ and $\dim(\facet)$.  (As we shall see in Corollary~\ref{C:dim}, $\dim(\facet)$ is not even
an invariant of the algebraic orbit.)
\end{remark}

In general it is difficult to identify $r$-associate classes of facets in $\apart$.  However, for the special case that these facets are defined by intersections of hyperplanes corresponding to \emph{simple} roots, we have a complete answer, as follows.

\begin{corollary} \label{C:assoc}
Let $S \subseteq \{1,2,\ldots, n-1\}$ and for each $i \in S$ let $k_i \in \mathbb{Z}$.   Suppose $\facet$ is an $r$-facet of maximal dimension in 
$$
\bigcap_{i \in S} H_{\alpha_i + k_i - r} \subset \apart.
$$
Then $\facet$ is $r$-associate to any $r$-facet of maximal dimension
in 
$$
H_S = \begin{cases}
\bigcap_{i \in S} H_{\alpha_i - r} & \text{if $n-1\notin S$;}\\
\bigcap_{i \in S, i\neq n-1} H_{\alpha_i - r} \cap H_{\alpha_{n-1}+K-r} & \text{if $n-1 \in S$,}
\end{cases}
$$
where $K$ is defined as follows.  Let $\lambda$ be the unordered partition
of $n$ determined by the maximally consecutive subsets of $S$ and let $x_l = \sum_{j\leq l} \lambda_j$.  Then  $K$ is any integer in the equivalence class modulo
$\gcd(\lambda)$ of the sum 
$$
-\sum_{l=1}^{\vert \lambda \vert} \sum_{j=1}^{\lambda_l-1} j k_{x_l+j-1}.
$$
\end{corollary}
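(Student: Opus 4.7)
The plan is to reinterpret both sides of the claimed $r$-associativity, via Theorem~\ref{T:slcorrespondence}, as images under $\gamma_d$ of explicit rational nilpotent orbits, and then to verify equality of those orbits using Proposition~\ref{P:slorbits}; Theorem~\ref{T:debacker} then delivers the conclusion.

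Let $\lambda$ be the partition of $n$ determined by the maximally consecutive subsets of $S$, so that $I_\lambda=S$. Choose $D=\diag(d_1,\ldots,d_n)\in\ratT$ with $d_{i+1}=\p^{k_i}$ for $i\in S$, for instance by taking the remaining entries to be $1$ except for one adjusted to enforce $\det D=1$. The stated intersection then equals $H_{\lambda,D}$, and Theorem~\ref{T:slcorrespondence} shows that every maximal $r$-facet $\facet$ therein is distinguished with
$$
\mathcal{O}(\facet,v)=\Ad(\ratG)X,\qquad X=J_\lambda D=\sum_{i\in S}\p^{k_i}E_{i,i+1},
$$
where $E_{i,i+1}$ denotes the standard elementary matrix. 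The identical construction applied to $H_S$ yields the orbit of
$$
X'=\sum_{i\in S,\;i\neq n-1}E_{i,i+1}+\varepsilon\,\p^{K}E_{n-1,n},
$$
where $\varepsilon=1$ if $n-1\in S$ and $\varepsilon=0$ otherwise. The bijectivity of $\gamma_d$ in Theorem~\ref{T:debacker} then reduces the corollary to the statement that $X$ and $X'$ lie in the same $\SLn(\ratk)$-orbit.

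To settle this, I would invoke Proposition~\ref{P:slorbits}: since $X$ and $X'$ share the Jordan type $\lambda$, they are conjugate if and only if their associated classes in $\ratk^\times/(\ratk^\times)^m$, $m=\gcd(\lambda)$, coincide. To extract the class of $X$, conjugate by a diagonal matrix $g=\diag(c_1,\ldots,c_n)\in\GLn(\ratk)$ so that $gXg^{-1}=J_\lambda$. The equations $c_{i+1}=c_i\p^{k_i}$, $i\in S$, cascade inside each of the $|\lambda|$ maximally consecutive blocks of $S$, leaving one free parameter $c_{x_{l-1}+1}$ per block. A short computation yields
$$
\det g=\prod_{l=1}^{|\lambda|}c_{x_{l-1}+1}^{\lambda_l}\cdot\p^{E},\qquad E=\sum_{l=1}^{|\lambda|}\sum_{s=1}^{\lambda_l-1}(\lambda_l-s)\,k_{x_{l-1}+s}.
$$
Since $m$ divides every $\lambda_l$, the free parameters disappear modulo $(\ratk^\times)^m$, so by Proposition~\ref{P:slorbits} the class of $X$ is $\p^{E}$; the same computation applied to $X'$ returns $\p^{K}$. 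Hence the desired conjugacy is equivalent to $K\equiv E\pmod m$, and the substitution $j=\lambda_l-s$ converts $E$ into the sum appearing in the statement (both expressions represent the same residue class modulo $m=\gcd(\lambda)$).

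The main obstacle is purely combinatorial bookkeeping: matching the blocks of $S$ with the parts of $\lambda$, tracking which $c_i$ are free versus determined, and verifying that the reindexed exponent matches the displayed formula. The case $n-1\notin S$ is trivial: then $\lambda_t=1$ forces $\gcd(\lambda)=1$, so $\ratk^\times/(\ratk^\times)^m$ is trivial and no parameter $K$ is required.
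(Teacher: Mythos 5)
Your proof is correct and follows essentially the same strategy as the paper's: identify each intersection of affine hyperplanes as an $H_{\lambda,D}$, invoke Theorem~\ref{T:slcorrespondence} to produce distinguished pairs for explicit nilpotent representatives, reduce the $r$-associativity claim to conjugacy of those representatives via the bijectivity of $\gamma_d$ in Theorem~\ref{T:debacker}, and compute the coset class in $\ratk^\times/(\ratk^\times)^m$ from the determinant of a diagonal conjugator as in Proposition~\ref{P:slorbits}. The only divergence is cosmetic: you normalize the conjugator $g$ so that the first entry of each block is free, whereas the paper fixes the last entry to be $1$; this produces an exponent $\sum_l\sum_{s}(\lambda_l-s)k_{x_{l-1}+s}$ rather than $-\sum_l\sum_j j\,k_{x_{l-1}+j}$, but the two differ by $\sum_l\lambda_l\sum_s k_{x_{l-1}+s}\equiv 0\pmod{\gcd(\lambda)}$, so the resulting residue class is the same, as you note.
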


\begin{proof}
Given $\facet$ as above, 
we can construct a nilpotent $X$ and its
image in $V_\facet$ such that $(\facet,v) \in I_r^d$ and 
$\Ad(G)X = \mathcal{O}(\facet,v)$ using the methods of the proof of Theorem~\ref{T:slcorrespondence}.
Namely, if for each $i \in S$ we choose 
$X_i \in \ratg_{\alpha_i+k_i}\setminus \ratg_{\alpha_i+k_i+1}$
then $X = \sum_{i\in S} X_i$ is such an element.  We may write
$X = J_\lambda D$ for some 
diagonal matrix $D$ whose diagonal entries $d_i$ satisfy $\val(d_{i+1})=k_i$
for each $i \in S$.

Now let $\facet_0$ be a maximal $r$-facet in $H_S$.
By DeBacker's theorem, if we can show that there exists $X_0$ 
so that with $v_0 = X_0 +V_{\facet_0+}$, 
$(\facet_0,v_0)$ is another distinguished
pair such that $\mathcal{O}(\facet_0,v_0) = \mathcal{O}(\facet,v)$,
then we may deduce that $\facet$ and $\facet_0$ are $r$-associate.  

By Proposition~\ref{P:slorbits} if we construct a $g\in GL_n$ such that
$gXg^{-1}=J_\lambda$, then $X$ lies in the same $SL_n$ orbit as
$X_0=J_\lambda D(d)$, for any $d \equiv \det(g) \mod \gcd(\lambda)$.
It is clear that $X_0$ is the lift of an element of $V_{\facet_0}$,
and hence that we are done, if $K \equiv \val(d) \mod \gcd(\lambda)$.

Note that the unordered partition $\lambda$ is defined by first choosing
$1 = x_1 < x_2 < \cdots < x_t \leq n$ so that for each $x_l$,
either $x_l \notin S \cup\{n\}$, or $x_l \in S\cup\{n\}$ and $x_l-1 \notin S$.
Then we set $\lambda_l = x_{l+1}-x_l$ and $\lambda_t = n+1-x_t$;
$\vert \lambda \vert = t$ is the number of parts in $\lambda$.

For each $l$ with $1\leq l \leq t$ 
consider the matrix
$J_{\lambda_l}D_l$ with $$D_l = \diag(d_{x_l}, d_{x_l+1}, \cdots, d_{x_l+\lambda_l-1}).$$  Then the diagonal matrix 
$$
g_l = \diag(\prod_{j=1}^{\lambda_l-1}d_{x_l+j}^{-1}, \prod_{j=2}^{\lambda_l-1}d_{x_l+j}^{-1}, \cdots, d_{x_l+\lambda_l-1}^{-1},1)
$$
satisfies $g_l(J_{\lambda_l}D_l)g_l^{-1} = J_{\lambda_l}$ (as does $ag_l$
for any nonzero scalar $a$) and
$\det(g_l) = \prod_{j=1}^{\lambda_l-1} (d_{x_l+j}^{-1})^{j}$.  
Since
$X$ is the direct sum of the $J_{\lambda_l}D_l$, and $\val(d_j) = k_{j-1}$,
we have
$$
\val(\det(g)) = \sum_{l=1}^t \sum_{j=1}^{\lambda_l-1} -j k_{x_l+j-1},
$$
as we wished to show.
 \end{proof}

\section{The rational nilpotent orbits of $\Spn(\ratk)$} \label{S:sp}

\subsection{Nilpotent orbits} \label{SS:sppartition}
Let $\ratk$ be a local non-Archimedean field of characteristic zero 
or of characteristic $\charp>3(h-1)$  and assume additionally
that the residual characteristic $\resp$ is odd.

Let $\lambda$ be a partition of $2n$ in which odd parts occur with
even multiplicity.  Then by Proposition~\ref{P:partitions}, there is
an algebraic nilpotent adjoint orbit $\mathcal{O}_\lambda$ of $\algG = \Spn$,
the $k$-points of which form one
or more
rational orbits under the action of $\ratG = \Spn(\ratk)$.
These rational orbits are parametrized by 
isometry classes of quadratic forms, as described in Proposition~\ref{P:sp} below. 
 This theorem is derived using the
notation and approach of \cite[Chap 9.3]{CMcG}; the result
is functionally equivalent to \cite[I.6]{Waldspurger}.

\begin{proposition} \label{P:sp}
Let $\lambda$ be a partition of $2n$ and 
write $m_j$ for the multiplicity of $j$ in $\lambda$.
Suppose
$m_j$ is even whenever $j$ is odd.
The $\ratG$-orbits in $\mathcal{O}_\lambda(\ratk)$ are parametrized
by $n$-tuples 
$$
\overline{\Q} = (\Q_2, \Q_4, \cdots, \Q_{2n})
$$
where $\Q_j$ represents the isometry class 
of a nondegenerate quadratic form over $\ratk$ of dimension $m_j$
(taking $\Q_j = 0$ if $m_j=0$).
\end{proposition}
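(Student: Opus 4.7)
My plan is to adapt the argument of \cite[Chap 9.3]{CMcG} from the real to the local non-Archimedean setting. Given $X \in \mathcal{O}_\lambda(\ratk)$, choose a $\ratk$-rational Lie triple $\phi$ with $\phi(e)=X$ (by Jacobson-Morozov, valid under our hypotheses on $\charp$), and decompose $V = \bigoplus_j L(j) \otimes W_j$ as in \eqref{E:decomp}. As recalled at the end of Section~\ref{SS:classical}, $\ratG$-orbits in $\mathcal{O}_\lambda(\ratk)$ correspond bijectively to $\ratG$-conjugacy classes of such Lie triples; the task is to extract from $\phi$ a complete set of invariants living in the tensor factors $L(j)$.

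The symplectic form $\langle\cdot,\cdot\rangle$ restricts compatibly to this decomposition. Since $\phi(h) \in \ratg$, two weight vectors for $\phi(h)$ pair to zero unless their weights are negatives of each other, which forces $V(j) \perp V(k)$ for $j \neq k$. Within each $V(j)$, invariance of $\langle\cdot,\cdot\rangle$ under $\phi(e)$ and $\phi(f)$ forces the restricted form to factor as a tensor product $B_j^L \otimes B_j^W$, where $B_j^L$ is a nondegenerate form on $L(j)$ and $B_j^W$ is a canonical form on $W_j$ computable via the relations of Section~\ref{S:sltwo} by pairing $e^i w$ with $e^{j-1-i} w$. An elementary calculation shows $B_j^W$ is alternating for $j$ even and symmetric for $j$ odd. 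Since $\langle\cdot,\cdot\rangle$ is itself alternating, $B_j^L$ is symmetric for $j$ even --- and thus defines a nondegenerate quadratic form $\Q_j$ of dimension $m_j$, using $\resp$ odd --- and alternating for $j$ odd, in which case $m_j$ is even by hypothesis, so $B_j^L$ is unique up to isometry and contributes no parameter.

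Next I would show that the assignment $X \mapsto (\Q_2,\Q_4,\ldots,\Q_{2n})$ descends to a bijection on $\ratG$-orbits. If $g \in \ratG$ conjugates the Lie triple for $X$ to that for $X'$, then $g$ preserves the decomposition \eqref{E:decomp} and restricts to isometries $L(j) \to L(j)'$, so the tuples agree. Conversely, given isometries of the $L(j)$'s for $j$ even (and, automatically, for $j$ odd), one assembles them with the identity on each $W_j$ to produce an intertwiner $g \in \GLnn(\ratk)$ of the two Lie triples, and verifies from the tensor-product form of $\langle\cdot,\cdot\rangle$ that $g \in \ratG$. Surjectivity is then the explicit construction of a representative for each tuple, carried out in Proposition~\ref{P:representatives}. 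The principal obstacle is this last verification: an intertwiner of Lie triples in $\GLnn(\ratk)$ is determined only modulo automorphisms of the $V(j)$ as $\sltwo$-modules, and one must check that the symplectic condition on $g$ reduces exactly, under the tensor decomposition, to the isometry conditions on $(L(j),B_j^L)$ for every $j$ --- so that matching $\Q_j$'s suffices to produce an element of $\ratG$ rather than merely of $\GLnn(\ratk)$.
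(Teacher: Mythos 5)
Your argument is essentially the paper's: both decompose $V = \bigoplus_j V(j) = \bigoplus_j L(j)\otimes W_j$, both extract a nondegenerate form on each $L(j)$ (the paper directly via $(v,w)_j = \langle v, X^{j-1}w\rangle$, you via the tensor factorization of $\langle\cdot,\cdot\rangle\vert_{V(j)}$ --- these are the same thing after unwinding the isomorphism $\phi_j$), both read off that the form is symmetric for $j$ even and alternating for $j$ odd, and both show a $\ratG$-intertwiner of Lie triples restricts to isometries $g_j$ on $L(j)$ and conversely. For surjectivity you defer to Proposition~\ref{P:representatives}; the paper instead rebuilds a symplectic form $\langle\cdot,\cdot\rangle'$ on $V$ from an arbitrary choice of $\Q_j$'s and uses $\mathfrak{sp}(V,\langle\cdot,\cdot\rangle')\simeq\ratg$. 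Both routes are fine, and there is no circularity in yours since the proof of Proposition~\ref{P:representatives} is an explicit verification independent of Proposition~\ref{P:sp}.

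There is, however, one step in your sketch that as stated does not work. You write that since $\phi(h)\in\ratg$, weight vectors pair to zero unless their weights are negatives, \emph{which forces} $V(j)\perp V(k)$ for $j\neq k$. The first clause is correct, but the conclusion does not follow from it: for $j,k\geq 2$ the weight sets of $V(j)$ and $V(k)$ overlap, so there are plenty of pairs $(u,v)\in V(j)\times V(k)$ with opposite weights. The weight argument only gives the orthogonality of the $h$-eigenspace decomposition $V_a\perp V_b$ for $a\neq -b$, not the orthogonality of the isotypic decomposition. To get $V(j)\perp V(k)$ you need the full $\sltwo$-invariance, not just the torus part: e.g., note that the form induces an $\sltwo(\ratk)$-equivariant map $V(j)\to V(k)^*$, which must vanish when $j\neq k$ since $V(j)$ and $V(k)^*\simeq V(k)$ are isotypic of different types; or, more concretely, for $v\in L(j)$, $u\in L(k)$ with $k>j$, repeatedly move powers of $X$ across the pairing and use $X^j v = 0$. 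This is a small, easily-repaired gap, but the reason you gave is not sufficient, and since the orthogonality is what makes the restricted forms $(\cdot,\cdot)_j$ nondegenerate, it is worth filling in correctly.
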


\begin{proof}
Let $X \in \mathcal{O}_\lambda(\ratk)$ and let $\phi \subset \ratg$ be a corresponding
Lie triple.  Then under $\phi$ the symplectic vector space $V$ decomposes as
$$
V = \bigoplus_{j \in \lambda} V(j) = \bigoplus_{j \in \lambda} L(j) \otimes W_j,
$$
with $\dim(L(j)) = m_j$, and each $V(j)$ a symplectic vector space.  The restriction of $\langle , \rangle$ to $V(j)$
naturally induces a nondegenerate form $(,)_j$ on the lowest weight space $L(j)$ via the formula
\begin{equation} \label{E:form}
(v,w)_j  = \langle v, X^{j-1}w \rangle \quad \forall v,w \in L(j).
\end{equation}
Note that this form is symplectic if $j$ is odd; such a form exists only if $\dim L(j) = m_j$ is
even, and then it is unique up to equivalence.
If $j$ is even, the form \eqref{E:form} is symmetric and nondegenerate; such forms are not unique (\emph{cf.} the first column of Table~\ref{T:numforms}).
Given $X' \in \mathcal{O}_{\lambda}(\ratk)$ and a corresponding Lie triple $\phi'$, 
suppose there exists a $g \in \ratG$ satisfying $\Ad(g)\phi = \phi'$.
%intertwines $\phi$ and $\phi'$  
Then the restriction $g_j$ of $g$ to each $V(j)$ induces
an isometry between $(L(j), (,)'_j)$ and $(L(j),(,)_j)$.
Conversely, any collection of such isometries
lifts to an element of $G = \Spn(\ratk)$.

Finally,  given any choice of nondegenerate symmetric form on
$L(j)$, for each even $j$, one can use \eqref{E:form} and
\eqref{E:decomp} to define  
a symplectic form on $V(j)$, and hence build a symplectic form $\langle,\rangle'$ on $V$,
with the property that $\phi \subset \mathfrak{sp}(V,\langle,\rangle')$. 
Since $\mathfrak{sp}(V,\langle,\rangle') \simeq \ratg$, it
follows that all equivalences classes of nondegenerate symmetric forms (equivalently, of
nondegenerate quadratic forms) on $L(j)$ arise for some choice of $X \in \mathcal{O}_\lambda(\ratk)$.
 \end{proof}

\begin{example}
Consider the groups $\mathrm{Sp}_4$ and $\mathrm{Sp}_6$, and suppose
$\charp = 0$ or $\charp > 3(h-1)$ so that Proposition~\ref{P:partitions} can 
be applied.
In Tables~\ref{T:examplesp4} and
\ref{T:examplesp6}, respectively,  we enumerate the 
algebraic orbits by partition $\lambda$,
and deduce the number of rational orbits in $\mathcal{O}_\lambda(\ratk)$
using Proposition~\ref{P:sp} and Table~\ref{T:numforms}.  The dimension
of each orbit can be  determined from the partition; see \cite[Chap 6]{CMcG}.
We include the dimension of the $r$-facet $\facet$ associated to $\mathcal{O}$
as a preview of Corollary~\ref{C:dim}.

\begin{table} 
\begin{tabular}{|ccccc|}
\hline Name & Partition & \# Rational Orbits & $\dim(\mathcal{O})$ & $\dim(\facet)$ \\ \hline
Principal & $[4]$ & $4$ & $8$ & $0$\\
Subregular & $[2^2]$ & $7$ & $6$ & $0$ or $1$  \\
Minimal & $[2,1^2]$ & $4$ & $4$ & $1$ \\
Zero & $[1^4]$ & $1$ & $0$ & $2$ \\ \hline
\end{tabular}
\caption{Rational nilpotent orbits of $\mathrm{Sp}_4(\ratk)$. \label{T:examplesp4}}
\end{table}

\begin{table}
\begin{tabular}{|ccccc|}
\hline Name & Partition & \#Rational Orbits &  $\dim(\mathcal{O})$ & $\dim(\facet)$ \\ \hline
Principal & $[6]$ & $4$ & $18$ & $0$\\
Subregular & $[4,2]$ & $16$ & $16$ & $0$ \\
 & $[4,1^2]$ & $4$ & $14$ & $1$ \\
 & $[3^2]$ & $1$ & $14$ & $1$ \\
 & $[2^3]$ & $8$ & $12$& $0$ or $1$ \\
 & $[2^2, 1^2]$ & $7$ & $10$ & $1$ or $2$ \\
Minimal & $[2,1^4]$ & $4$ & $6$ & $2$\\
Zero & $[1^6]$ & $1$ & $0$ & $3$ \\ \hline
\end{tabular}
\caption{Rational nilpotent orbits of $\mathrm{Sp}_6(\ratk)$. \label{T:examplesp6}}
\end{table}
\end{example}

To achieve our orbit correspondence, we need ``best'' representatives
of the different quadratic forms, which is the goal of the next
subsection.

\subsection{Classification of quadratic forms} \label{SS:lam}
In this subsection, $\ratk$ may be any local field of
characteristic $0$ or odd.  Let $\ep \in \ratk^\times$ be an non-square such that $\val(\ep)=0$.

Let $\Q$ be a quadratic form on an $m$-dimensional vector space over $\ratk$; 
write $Q$ for
a matrix representing $\Q$.
Define $\dim(\Q) = m$ and $\textrm{Det}(\Q)$ as the class of $\det(Q)$
in $\ratk^\times/(\ratk^\times)^2$.
For $a, b \in \ratk^\times$, the \emph{Hilbert symbol} $(a,b)_\ratk$ takes
values in $\pm 1$.  It equals $1$ if and only if $ax^2 + by^2 = 1$
has a solution $(x,y) \in \ratk^2$;  see Table~\ref{Hilbert}.
Given a diagonal representative $Q=\diag(a_1, a_2, \cdots, a_m)$ of 
$\Q$, the Hasse invariant of $\Q$ is defined as
$\Hasse(\Q) = \prod_{i<j} (a_i,a_j)_\ratk$.

\begin{table}
\begin{tabular}{|c|rrrr||c|rrrr|}
\hline
\multicolumn{5}{|c||}{Case: $-1 \in {\ratk^\times}^2$} & \multicolumn{5}{c|}{Case: $-1 \notin {\ratk^\times}^2$}\\ \hline
  $(a,b)_\ratk$    & $1$  & $\ep$ & $\p$ & $\ep \p$ & $(a,b)_\ratk$    & $1$ & $\ep$ & $\p$ & $\ep \p$\\ \hline
$1$ & $1$  & $1$   & $1$  & $1$      & $1$ & $1$ & $1$   & $1$  & $1$ \\
$\ep$ & $1$  & $1$   &$-1$  & $-1$  & $\ep$ & $1$  & $1$   &$-1$  & $-1$\\
$\p$ &  $1$   &$-1$  & $1$   &$-1$  & $\p$ &  $1$   &$-1$  &$-1$  & $1$\\
$\ep \p$ &  $1$   &$-1$  &$-1$  & $1$ & $\ep \p$ &  $1$   &$-1$  & $1$   &$-1$  \\ \hline
\end{tabular}
\caption{Explicit values of the Hilbert symbol on representatives of $\ratk^\times/(\ratk^\times)^2$ when $\resp \neq 2$.     \label{Hilbert}}
\end{table}

The following theorem is well-known;
see for example  \cite[Theorem VI.2.12]{Lam}.

\begin{theorem} \label{T:lam}
Let $\ratk$ be a local field of characteristic $0$ or odd.
Two nondegenerate quadratic forms $\Q$ and $\Q'$ over $\ratk$
are isometric if and only if 
$$
\dim(\Q) = \dim(\Q'), \quad \textrm{Det}(\Q) = \textrm{Det}(\Q'), \quad \text{and} \quad
\Hasse(\Q) = \Hasse(\Q').
$$
\end{theorem}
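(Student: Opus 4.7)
The plan is to prove the nontrivial direction by induction on $m=\dim(\Q)$, after first verifying that the three quantities really are isometry invariants. The dimension is obviously preserved; $\disc(\Q)$ transforms by $\det(P)^2$ under a change of basis and so is well-defined in $\ratk^\times/(\ratk^\times)^2$; and the Hasse invariant, although defined through a choice of diagonalization, is actually independent of that choice, as follows from the bilinearity and symmetry of the Hilbert symbol together with the fact that any two orthogonal bases are linked by a sequence of two-dimensional elementary moves.

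For the converse direction I would diagonalize both forms (permissible since $\charp\neq 2$). The case $m=1$ is immediate from the $\disc$ condition. The case $m=2$ is the heart of the argument. Given $\Q=\langle a,b\rangle$ and $\Q'=\langle a',b'\rangle$ sharing the same $\disc$ and $\Hasse$, I would show that they represent a common nonzero value $c\in\ratk^\times$: the standard characterization states that $\langle a,b\rangle$ represents $c$ if and only if $\langle a,b,-c\rangle$ is isotropic, and this in turn translates via Hilbert symbol manipulations into an identity involving only $c$, $\disc(\Q)$ and $\Hasse(\Q)$. Hence $\Q$ and $\Q'$ represent exactly the same set of nonzero values; picking a common $c$ and splitting off $\langle c\rangle$ from each side reduces to the one-dimensional case with equal discriminants, which has just been handled.

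For the inductive step $m\geq 3$, the plan is again to exhibit a common $c\in\ratk^\times$ represented by both $\Q$ and $\Q'$. Then $\Q\simeq\langle c\rangle\perp\Q_1$ and $\Q'\simeq\langle c\rangle\perp\Q_1'$, and one checks directly from the definitions that $\dim$, $\disc$ and $\Hasse$ match between $\Q_1$ and $\Q_1'$, so induction finishes the argument. The existence of the common $c$ rests on a universality statement: over $\ratk$ with $\resp$ odd, any nondegenerate form of dimension $\geq 4$ represents every nonzero value, and a nondegenerate ternary form fails to represent at most one class in $\ratk^\times/(\ratk^\times)^2$. Since $\vert\ratk^\times/(\ratk^\times)^2\vert=4$ under our hypothesis, each of $\Q$ and $\Q'$ represents at least three of the four square classes, forcing an overlap. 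The main obstacle is establishing this representation statement for ternary forms, which is the local heart of Hasse--Minkowski and reduces to a careful Hilbert symbol computation using Table~\ref{Hilbert} together with the universality of the hyperbolic plane $\langle 1,-1\rangle$; once this is secured, the remainder is routine bookkeeping with the invariants.
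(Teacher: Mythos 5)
The paper does not prove Theorem~\ref{T:lam}: it is simply cited to \cite[Theorem VI.2.12]{Lam}. Your sketch is in substance the standard proof found in Lam and in Serre's \emph{Cours d'arithm\'etique}: diagonalize, handle $m=1$ via the discriminant, handle $m=2$ by showing that the set of values represented by a binary form is determined by its discriminant and Hasse invariant (via the ternary isotropy criterion), and for $m\ge 3$ produce a common represented value $c$, split off $\langle c\rangle$, check that the residual invariants still match, and induct. That plan is correct and, modulo supplying the Hilbert-symbol calculations you defer, would yield a complete proof.

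Two small points to tighten. First, you assert $\vert\ratk^\times/(\ratk^\times)^2\vert=4$ ``under our hypothesis,'' but the theorem as stated allows $\ratk$ of characteristic zero with residual characteristic $\resp=2$, for which the index of squares is $2^{[\ratk:\mathbb{Q}_2]+2}>4$ (and Table~\ref{Hilbert} in the paper explicitly restricts to $\resp\neq 2$). This does not break your pigeonhole step, since over any nonarchimedean local field a nondegenerate ternary form misses at most one square class and there are always at least four, so an overlap is forced; but the displayed cardinality should be phrased as a lower bound rather than an equality. Second, your inductive step for $m\ge 4$ invokes that every nondegenerate quaternary form over $\ratk$ is universal; this is correct over nonarchimedean local fields but is not immediate from isotropy (the unique anisotropic quaternary form is the norm form of the quaternion division algebra, and its universality is a separate small lemma). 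You should flag it as such, alongside the ternary representation criterion you already identify as the ``local heart'' of the argument. You should also record explicitly the identity $\Hasse(\langle c\rangle\perp\Q_1)=(c,\disc(\Q_1))_\ratk\,\Hasse(\Q_1)$, since it is precisely what makes the invariants of $\Q_1$ and $\Q_1'$ agree after splitting off $\langle c\rangle$, and it is easy to get a sign convention wrong there.
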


A quadratic form $\Q$ is called \emph{anisotropic} if there is no
nonzero $x$ such that $\Q(x)=0$, and \emph{isotropic} otherwise.
Following \cite[VI.2]{Lam}, we list the number of nondegenerate quadratic forms, together with
the number of those which are anisotropic, in Table~\ref{T:numforms}.
\begin{table}
\begin{tabular}{|c|c|c|}
\hline $\dim(\Q)$ & \# Quadratic Forms & \# Anisotropic Forms\\ \hline
1 & 4 & 4\\
2 & 7 & 6\\
3 & 8 & 4\\
4 & 8 & 1\\
$\geq$ 5 & 8 & 0 \\ \hline
\end{tabular}
\caption{Number of equivalence classes of nondegenerate quadratic forms over $\ratk$, $\resp\neq 2$, together with the number thereof which are anisotropic.} \label{T:numforms}
\end{table}
A key example of a nondegenerate isotropic quadratic form is the \emph{hyperbolic
plane}, which can be represented
by the matrix
\begin{equation} \label{Q:iso}
q_0 = \left[ \begin{matrix} 0 & 1 \\ 1 & 0 \end{matrix} \right].
\end{equation}
Thus $\dim(q_0)=2$, $\textrm{Det}(q_0) = -1$ and $\Hasse(q_0) = 1$.
Any orthogonal direct sum of hyperbolic planes is called a hyperbolic space.
Every nondegenerate quadratic form $\Q$ may be uniquely decomposed as a
direct sum of hyperbolic and anisotropic forms, which we may write as
\begin{equation} \label{sum}
\Q = q_0^m \oplus \Q_{\text{aniso}}
\end{equation}
where $q_0^m$ denotes the direct sum of $m$ copies of $q_0$, for some $m \leq \frac12 \dim(\Q)$.

\begin{lemma}  Suppose $\resp \neq 2$.
Given a quadratic form $\Q$, its anisotropic part 
$\Q_{\text{aniso}}$ is either the zero subspace, or one of the 15
anisotropic quadratic forms in Table~\ref{Table:anisotropic}.
\end{lemma}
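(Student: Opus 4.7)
The plan is to combine the orthogonal decomposition~\eqref{sum} with Theorem~\ref{T:lam} to reduce the classification of $\Q_{\text{aniso}}$ to a finite bookkeeping problem. By Theorem~\ref{T:lam}, the isometry class of any nondegenerate form is determined by its dimension together with $(\disc, \Hasse)$. Since $\resp \neq 2$, the group $\ratk^\times/(\ratk^\times)^2$ has order four with representatives $\{1,\ep,\p,\ep\p\}$, so there are at most $4 \cdot 2 = 8$ isometry classes of nondegenerate forms in each fixed dimension, and it suffices to establish the anisotropic counts recorded in the third column of Table~\ref{T:numforms}.

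Next I would proceed dimension by dimension. Every $1$-dimensional form is anisotropic, giving four classes. In dimension two, $\langle a,b\rangle$ is isotropic precisely when $-ab \in (\ratk^\times)^2$; thus the isotropic $2$-dimensional forms are exactly those with $\disc \equiv -1 \bmod (\ratk^\times)^2$, all isometric to the hyperbolic plane $q_0$. This accounts for one of the seven two-dimensional classes and leaves six anisotropic ones, whose invariants are read off from Table~\ref{Hilbert}. For dimensions three and four, a standard Witt cancellation argument shows that any isotropic form splits off a copy of $q_0$; combined with the classical count of quaternion algebras over a non-Archimedean local field, this yields four anisotropic classes in dimension three (the pure parts of the four quaternion algebras) and exactly one anisotropic class in dimension four (the norm form of the unique non-split quaternion algebra over $\ratk$). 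Finally, the Hasse-Minkowski local theorem asserts that every nondegenerate form of dimension at least five over $\ratk$ is isotropic. Summing gives $4+6+4+1 = 15$ anisotropic classes.

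The only non-trivial bookkeeping is in dimensions three and four, where one must verify that the claimed numbers of $(\disc,\Hasse)$-pairs are actually realized by an anisotropic form and that the remaining pairs are not. This is carried out by exhibiting, for each admissible pair, an explicit diagonal form with entries in $\{1,\ep,\p,\ep\p\}$ whose invariants can be computed directly from Table~\ref{Hilbert}, while the forbidden pairs are ruled out by showing that any representative splits off a hyperbolic plane. The resulting fifteen preferred representatives then constitute Table~\ref{Table:anisotropic}.
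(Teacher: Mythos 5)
Your overall strategy (reduce to a dimension-by-dimension count of anisotropic classes using the invariants $(\dim, \disc, \Hasse)$ and then exhibit diagonal representatives with entries in $\{1,\ep,\p,\ep\p\}$) is sound and closely parallels the paper, which cites \cite[Theorem VI.2.2]{Lam} for dimensions $1$ and $4$ and does a direct check in dimensions $2$ and $3$ using the observation that an isotropic ternary form must be $q_0 \oplus q_1$.

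However, there is a genuine error in your justification for the dimension-$3$ count. You claim the four anisotropic ternary classes are ``the pure parts of the four quaternion algebras over a non-Archimedean local field,'' but over such a field there are exactly \emph{two} quaternion algebras up to isomorphism: the split one $M_2(\ratk)$ and a unique nonsplit one $D$ (the Brauer group of $\ratk$ has a single element of order $2$). The pure part of the norm form on $D$ gives just one anisotropic ternary form up to isometry, with $\disc \equiv 1$. To obtain the other three classes, one must scale: the anisotropic ternary forms are precisely the multiples $c \cdot N_D^0$ of that pure norm form as $c$ ranges over the four square classes, and one checks that distinct $c$ give distinct $(\disc,\Hasse)$ pairs. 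Your sentence also contradicts itself, since you correctly refer to ``the unique non-split quaternion algebra'' when treating dimension $4$. A smaller point: ``Witt cancellation'' is not quite the right name for the step that splits off a hyperbolic plane from an isotropic form; that is Witt decomposition. The rest of the argument (dimensions $1$, $2$, $4$, and $\geq 5$) and the final tally $4+6+4+1 = 15$ are correct.
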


\begin{table}
\begin{tabular}{|c|cclc|} 
\hline Dimension & Det($\Q$) & Hasse($\Q$) & Representative & \\  \hline
$1$ & $1$ & $1$ & $1$ &\\
$1$ & $\ep$ & $1$ & $\ep$ &\\
$1$ & $\p$ & $1$ & $\p$ &\\
$1$ & $\ep \p$ & $1$ & $\ep \p$ & \\ \hline
$2$ & $\alpha$ & $1$ & $\diag(1,\alpha)$ & \\
$2$ & $\alpha$ & $-1$ & $\diag(\p,\alpha \p)$ &\\
$2$ &  $tt'\p$ & $(t,t'\p)_k$ & $\diag(t,t'\p)$ & $t, t'\in \{1, \ep\}$\\ \hline
$3$ & $t$ & $-1$ & $\diag(\alpha t,\p,\alpha \p )$ & $t \in \{1,\ep\}$\\
$3$ &  $\alpha t \p$ & $(\alpha, \p)_k$ & $\diag(1, \alpha, t \p)$ & $t \in \{1, \ep\}$\\ \hline
$4$ &  $1$ & $-1$ & $\diag(1,-\ep,-\p,\ep \p)$ &\\ \hline
\end{tabular}
\caption{Explicit representatives of the 15 equivalence classes of anisotropic quadratic forms over $\ratk$, $\resp \neq 2$.  Here, we abbreviate  $\alpha = \ep$ if $-1 \in {\ratk^\times}^2$, and $\alpha = 1, \ep = -1$, otherwise. \label{Table:anisotropic}}
\end{table}

\begin{proof}
The values for dimensions $1$ and $4$ are from \cite[Theorem VI.2.2]{Lam}.
For the rest, one produces representatives of all equivalence classes
of quadratic forms from among diagonal matrices with entries in the set
$\{1,\ep,\p,\ep\p\}$, and then calculates their invariants.  
Note that any isotropic form of dimension $3$ must be of the form
$q_0 \oplus q_1$ for some $q_1 \in \{1,\ep,\p,\ep\p\}$, and so
are easy to eliminate from the list.
The
final result in Table~\ref{Table:anisotropic} is condensed by setting
$\alpha = \ep$ if $-1 \in {\ratk^\times}^2$, but $\alpha = 1$ and $\ep = -1$ 
if $-1 \notin {\ratk^\times}^2$.
 \end{proof}

Given a quadratic form $\Q$, a matrix representative of the form
$Q=q_0^m \oplus \Qani$, where $\Qani$ is one of the diagonal
matrices given in Table~\ref{Table:anisotropic}, will henceforth be called
a \emph{minimal matrix  representative} of $\Q$.

\subsection{Explicit parametrization of nilpotent orbits} \label{SS:sporbits}
We return to the hypotheses on $\ratk$ from Section~\ref{SS:sppartition}.
In \cite[Ch 5.2]{CMcG}, Collingwood and McGovern construct 
explicit Lie triples representing each of the algebraic
orbits $\mathcal{O}_\lambda$.  In this subsection, we construct
triples for each of the rational orbits in $\mathcal{O}_\lambda(\ratk)$
using similar ideas, though we must group the indices
in a slightly different way.

Let $\lambda$ be a partition of $2n$ such that each odd part occurs with
even multiplicity, and let $m_j$ denote the multiplicity of $j$ in $\lambda$.
Let $\overline{\Q} = (\Q_2, \Q_4, \cdots, \Q_{2n})$ be an $n$-tuple of 
quadratic forms corresponding to $\lambda$ and choose a minimal
matrix representative $Q_i$ for each $\Q_i$.   We construct a 
representative $X$ of the corresponding nilpotent orbit in $\ratg$
by first producing a decomposition of $\ratg$ of the form \eqref{E:decomp}
which corresponds to the partition $\lambda$, and then defining $X$ by
its action on each symplectic subspace $V(j)$.

Denote by $\{ p_1, p_2, \cdots, p_n, q_1, q_2, \cdots q_n \}$ an ordered
symplectic basis for $V$.  By this we mean that $\langle p_i, q_j \rangle = \delta_{ij}$,
$\langle q_i, p_j \rangle = -\delta_{ij}$ and
all other pairings are zero.
For each $i \in \{ 1,2,\ldots, 2n\}$, define a set of indices
$$
s_i = \sum_{j<i} \frac12 j m_j \in \mathbb{Z};
$$
so $0 = s_1 \leq s_2 \leq \ldots \leq s_{2n} \leq n$.
For each $j$ such that $m_j \neq 0$, 
construct the symplectic subspace $V(j)$ as
\begin{equation} \label{E:basis}
V(j) = \textrm{span}\{ p_{s_j + 1}, \ldots, p_{s_j+\frac12 jm_j}, 
q_{s_j+1}, \ldots, q_{s_j+\frac12 jm_j} \}.
\end{equation}
Then we have $V = \oplus_{j \colon m_j \neq 0} V(j)$, so to define $X$ it suffices to
give its action on each such nonzero $V(j)$.

If $j$ is odd, let $\mu$ be the partition of $\frac12 jm_j$
given by $\frac12 m_j$ copies of $j$, and 
define the restriction of $X$ to $V(j)$
with respect to the basis \eqref{E:basis} by
\begin{equation} \label{odd}
X|_{V(j)} = \left[ \begin{matrix}
J_{\mu} & 0 \\
0 & -J_\mu^\dagger
\end{matrix} \right].
\end{equation}
If $j = 2N$ is even, 
% choose a representative of the quadratic form $Q_{j}(m_j)$ with the matrix representation $Q = q_0^m \oplus Q_{aniso}$ as in \eqref{Q:iso} and Table~\ref{Table:anisotropic}.  Let
let 
$Z$ denote the $m_j(N-1) \times m_j(N-1)$ zero matrix.
Then define the restriction of $X$ to $V(j)$
with respect to the basis \eqref{E:basis} by
\begin{equation} \label{even}
X|_{V(j)} = \left[ \begin{matrix}
J_{Nm_j}^{m_j} & Z \oplus (-1)^{N} Q_{j} \\
0 & -(J_{Nm_j}^{m_j})^\dagger
\end{matrix} \right].
\end{equation}
Note that $J_{Nm_j}^{m_j} \doteq (J_{Nm_j})^{m_j} = J_{N} \otimes I_{m_j}$, where
$I_{m_j}$ the $m_j \times m_j$ identity matrix.

\begin{proposition} \label{P:representatives}
Let $\lambda$ be as above.  
The matrix $X\in \ratg$ defined by \eqref{odd} and \eqref{even} 
is a representative of the $\ratG$-orbit in $\mathcal{O}_\lambda(\ratk)$
corresponding to the $n$-tuple of quadratic forms $\overline{\Q}$.
\end{proposition}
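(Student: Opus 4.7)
The plan is to verify, in turn, the three conditions that pin down a rational $\ratG$-orbit in $\mathcal{O}_\lambda(\ratk)$ via Proposition~\ref{P:sp}: (a) $X$ lies in $\ratg$; (b) $X$ is nilpotent of Jordan type $\lambda$, so that $X\in\mathcal{O}_\lambda(\ratk)$; and (c) for each even $j$ with $m_j>0$, the induced form on the lowest weight space $L(j)\subset V(j)$, defined by \eqref{E:form}, is represented by the matrix $Q_j$ and so has isometry class $\Q_j$.

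Step (a) is immediate from the criterion that a block matrix $\left[\begin{smallmatrix} A & B \\ C & -A^\dagger\end{smallmatrix}\right]$ lies in $\mathfrak{sp}(V,\langle,\rangle)$ iff $B$ and $C$ are symmetric: the off-diagonal blocks in \eqref{odd} vanish, and the block $Z\oplus(-1)^N Q_j$ in \eqref{even} is symmetric because $Q_j$ is.  For step (b), I would re-index the basis \eqref{E:basis} of $V(j)$ as $p_{a,b},q_{a,b}$, with $a$ indexing position within a Jordan block (of length $j/2$ if $j$ is odd, of length $N$ if $j=2N$ is even) and $b$ indexing the block.  For odd $j$ the $p$- and $q$-chains are uncoupled and yield $m_j$ Jordan blocks of size $j$ directly.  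For $j=2N$ even, a short computation in this basis shows that each $q_{1,b}$ generates a single Jordan chain
\[
q_{1,b} \;\longmapsto\; -q_{2,b} \;\longmapsto\; \cdots \;\longmapsto\; (-1)^{N-1}q_{N,b} \;\longmapsto\; -\textstyle\sum_{b'}(Q_j)_{b',b}p_{N,b'} \;\longmapsto\; \cdots \;\longmapsto\; -\textstyle\sum_{b'}(Q_j)_{b',b}p_{1,b'} \;\longmapsto\; 0
\]
of length exactly $2N=j$, in which the off-diagonal block $Z\oplus(-1)^N Q_j$ glues the $q$-half to the $p$-half.  Nondegeneracy of $Q_j$ then implies that these $m_j$ chains are linearly independent and span $V(j)$, so $X|_{V(j)}$ has Jordan type $j^{m_j}$; summing over $j$ gives Jordan type $\lambda$.

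For (c), I extend $X$ to a $\ratk$-rational Lie triple $(Y,H,X)$, either by invoking Jacobson--Morozov under the standing hypothesis on $\charp$ or by exhibiting $Y,H$ in the same block form as $X$, and identify $L(j)$ with $\ker(Y|_{V(j)})$.  The chain description in (b) shows $L(j)=\mathrm{span}\{q_{1,b}\}_{b=1}^{m_j}$ for $j=2N$ even, together with $X^{j-1}q_{1,b'}=-\sum_{b''}(Q_j)_{b'',b'}p_{1,b''}$.  Combined with $\langle q_{1,b},p_{1,b''}\rangle=-\delta_{bb''}$, this gives
\[
(q_{1,b},q_{1,b'})_j \;=\; \langle q_{1,b},\,X^{j-1}q_{1,b'}\rangle \;=\; (Q_j)_{b,b'},
\]
so the induced form on $L(j)$ is represented by $Q_j$ itself, as required.

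The step I expect to be the main obstacle is the sign bookkeeping in (b) for the even parts: the factor $(-1)^N$ in the definition \eqref{even} is placed there precisely so that the minus signs accumulating along the $q$-half of the chain, combined with the sign in $\langle q,p\rangle=-\delta$ used in (c), collapse cleanly to $+Q_j$ rather than $-Q_j$.  Once the indexing and signs are set up consistently, the remaining verifications are mechanical applications of Propositions~\ref{P:partitions} and~\ref{P:sp}.
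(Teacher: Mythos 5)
Your proposal is correct and takes essentially the same route as the paper: both exhibit the Lie triple $(Y,H,X)$ in explicit block form preserving each $V(j)$, identify $L(j)$ for even $j$ as the span of $q_{s_j+1},\dots,q_{s_j+m_j}$, and reduce the verification to computing $X^{j-1}|_{V(j)}$ and pairing against $\langle q,p\rangle=-\delta$ (the paper directly computes $X^{j-1}|_{V(j)}=\left[\begin{smallmatrix}0&-Q_j\oplus Z\\0&0\end{smallmatrix}\right]$, which is exactly the endpoint of your Jordan-chain bookkeeping). Your observation about the role of the $(-1)^N$ factor cancelling against the sign in $\langle q,p\rangle$ correctly pinpoints the one nontrivial bit of arithmetic.
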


\begin{proof}
We complete $X$ to a Lie triple $(Y,H,X)$ in $\ratg$, and then verify 
that $V$ decomposes according to $\lambda$ and that the
resulting form \eqref{E:form} coincides with $\Q_j$ for each
even $j$.  As done for $X$, we define $H$ and $Y$ by their action on 
the subspaces $V(j)$.

Following the notation of \eqref{HrYr}, for each odd $j$, let $H_{j}^{\oplus \frac12 m_j}$
denote the direct sum of $\frac12 m_j$ copies of the
diagonal
matrix $H_{j}$ and let
$H|_{V(j)} = H_{j}^{\oplus \frac12 m_j} \oplus -H_{j}^{\oplus \frac12 m_j}$.
Similarly, we may set
$Y|_{V(j)} = Y_{j}^{\oplus \frac12 m_j} \oplus -\left( Y_{j}^{\oplus \frac12 m_j}\right)^\dagger$.

For each even $j = 2N$, let $H_{Nm_j}$ be the $Nm_j \times Nm_j$ matrix given by $H_{Nm_j} = (j-1)I_{m_j} \oplus (j-3)I_{m_j} \oplus \cdots \oplus I_{m_j}$, and set
\begin{equation} \label{E:lowestweightspace}
H|_{V(j)}  = H_{Nm_j} \oplus -H_{Nm_j}.
\end{equation}
Similarly, set 
$$
Y_{Nm_j} = \left( J_N^\dagger \; \diag(j-1, 2(j-2), \ldots, (N-1)(N+1), 0) \right) \otimes I_{m_j}
$$
and let
$$
Y|_{V(j)} = \left[ \begin{matrix}
Y_{Nm_j} & 0 \\
Z \oplus (-1)^{N}N^2 Q_{j}^{-1} & -Y_{Nm_j}^\dagger
\end{matrix} \right].
$$
One checks directly that each $(Y|_{V(j)},H|_{V(j)},X|_{V(j)})$ is a
 Lie triple in the 
Lie subalgebra $\mathfrak{sp}(V(j))$ and hence
by orthogonality $(Y,H,X)$ is a  Lie triple in $\ratg$.
By construction the corresponding decomposition of $V$
into isotypic subspaces under this Lie triple corresponds
to the partition $\lambda$ so $X \in \mathcal{O}_\lambda(\ratk)$.

It remains to verify that when $j$ is even, the
quadratic form \eqref{E:form} is isometric to $\Q_{j}$.
First note that by \eqref{E:lowestweightspace}, with respect to the basis \eqref{E:basis} of $V(j)$,
the lowest weight space of this isotypic component is 
$$
L(j) = \textrm{span}\{ q_{s_j+1}, q_{s_j+2}, \ldots, q_{s_j+m_j} \}.
$$
Given $v,w \in L(j)$, write $\overline{v}, \overline{w}$ for
their coordinate vectors with respect to this basis.

By construction, we have that
$$
X^{j - 1}|_{V(j)}
= \left[ \begin{matrix} 
0 & -Q_j\oplus Z \\
0 & 0
\end{matrix} \right],
$$
which allows us to deduce directly that 
$\langle v,X^{j-1}w \rangle = \overline{v}^\dagger Q_j\overline{w}$, as required.
 \end{proof}

\subsection{Correspondence with the DeBacker parametrization} \label{SS:spdebacker}
Suppose now that $\resp > 3(h-1)$.
As in Section~\ref{S:slncorrespondence}, we assert 
that our choice of representative defines the equivalence class
of the corresponding pair in $I_r^d$.  To make this precise, we
need to identify the affine roots which occur in the expression of 
$X$ as a sum of root vectors; Theorem~\ref{T:debackerspn} below
says that the orbit of $X$ is associated to the corresponding intersection
of affine $r$-hyperplanes.  Unlike the case of $SL_n$,  however,
the roots which arise are not generally simple, so the precise
statement is more cumbersome.

Set our notation as in Section~\ref{SS:sporbits} and
suppose that $X \in \mathcal{O}_\lambda(\ratk)$ is as defined in
Proposition~\ref{P:representatives}.  For each odd $j$, 
let 
$$
I_{j} = \{1,2, \cdots, \frac12 jm_j\} \setminus \{ j, 2j, \cdots, \frac12jm_j\}
$$
and let $S_{j} = S_{j}^1$ denote the set of simple roots
$$
S_{j}^1=\{e_{s_j+k}-e_{s_j+k+1} \colon k \in I_{j} \}.
$$
For each even $j$, suppose $Q_j = q_0^m \oplus Q_{aniso}$, with $0 \leq 2m \leq m_j$ and set $M_j = (\frac12 j - 1)m_j$ for simplicity.  Then
we 
take instead 
$S_{j} = S_{j}^1 \cup S_{j}^2$, where these are the
sets of positive roots defined by
\begin{align*}
S_{j}^1=\{ & e_{s_j + k} - e_{s_j + k+m_j} \colon 1 \leq k \leq M_j \} \\
& \cup \{ e_{s_j + M_j + 2i-1} + e_{s_j + M_j + 2i} \colon i \in \{1, 2, \cdots, m\} \} 
\end{align*}
and 
$$
S_{j}^2 = \{ 2e_{s_j + M_j +i} \colon 2m < i  \leq m_j \}.
$$
Finally, if $Q_{aniso} = \diag(a_{2m+1}, a_{2m+2}, \cdots, a_{m_j})$, define
for $\alpha_i = 2e_{s_j + M_j +i}$ the integer $v_{\alpha_i} = \val(a_i)$ for each $i \in \{2m+1,2m+2,\cdots, m_j\}$.

Now let $S = \bigcup_{j \colon m_j \neq 0} S_{j}$
and define $H_{\lambda, \overline{\Q}}$ to be the common intersection
of all the hyperplanes $H_{\alpha-r}$ for $\alpha \in S_j^1$ and
$H_{\alpha+v_\alpha-r}$ for $\alpha = S_{2j}^2$.
% $$ H_{\lambda, \overline{\Q}} = \bigcap_{j \colon m_j \neq 0}  \left( \bigcap_{\alpha \in S_{j}^1} H_{\alpha,r}\right) \cap  \bigcap_{\substack{\text{$j$ even}\\ m_j \neq 0}} \left( \bigcap_{\alpha \in S_{j}^2} H_{\alpha+v_\alpha,r}\right). $$

\begin{theorem} \label{T:debackerspn}
The affine subspace $H_{\lambda,\overline{\Q}} \subset \apart$ is a nonempty union of $r$-facets.
Let $\facet$ be any maximal $r$-facet in $H_{\lambda,\overline{\Q}}$, and let
$v$ denote the projection of $X$ in $V_\facet$.  Then $(\facet,v) \in I_r^d$
and $\mathcal{O}(\facet,v) = \Ad(\ratG)X$.
\end{theorem}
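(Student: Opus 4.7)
The plan is to mirror the proof of Theorem~\ref{T:slcorrespondence}, with modifications reflecting the richer root structure of type $C_n$ and the presence of the valuations $v_\alpha$ coming from the anisotropic part of each $\Q_{2j}$. First I would check that the affine roots defining $H_{\lambda,\overline{\Q}}$ form a linearly independent system: the roots attached to distinct values of $j$ involve disjoint index ranges (separated by the $s_j$), while within a single $j$ the roots in $S_j^1$ (and $S_j^2$, for $j$ even) are linearly independent by inspection of their supports. This would show $H_{\lambda,\overline{\Q}}$ is nonempty and a union of $r$-facets, with every maximal one of dimension $n - \vert S \vert$. For any $x \in \facet$, the construction of $X$ in Proposition~\ref{P:representatives} shows each root-space component of $X$ lies in $\ratg_\psi$ for some $\psi$ with $\psi(x) = r$, so $X \in \ratg_{x,r}$. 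I would then use the Lie triple $(Y,H,X)$ provided in Proposition~\ref{P:representatives} and verify, using the hypothesis $\resp > 3(h-1)$, that $H \in \ratt_0 \subset \ratg_{x,0}$ and $Y \in \ratg_{x,-r}$; the latter uses that the entries of $Y$ involve $N^2 Q_j^{-1}$, whose valuations invert those of $Q_j$ and thereby match the opposite affine root conditions in the lower block. Applying Lemma~\ref{L:DB1} then yields $\mathcal{O}(\facet,v) = \Ad(\ratG)X$.

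For distinguishedness, Proposition~\ref{P:mine} reduces the problem to showing $\dim \facet_0 \leq \dim \facet = n - \vert S \vert$ for every pair $(\facet_0, v_0) \in \gamma^{-1}(\Ad(\ratG)X)$; by Corollary~\ref{C:justone} it suffices to handle one value of $r$, and I would choose $r \in \mathbb{R} \setminus \mathbb{Q}$. After conjugating by $\ratG$ one may assume $\facet_0 \subset \apart$. Since $r \notin \mathbb{Z}$ we have $\ratt_r = \ratt_{r+}$, so by \cite[Proposition 4.3.2]{DeBacker} one can lift $v_0$ to $X_0 \in \bigoplus_{\psi \colon \psi(x_0) = r} \ratg_\psi$ for any $x_0 \in \facet_0$. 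Writing $X_0 = \sum_\psi c_\psi X_\psi$, each $\psi$ with $c_\psi \neq 0$ imposes the constraint $\psi(y) = r$ on $\facet_0$, so $\dim \facet_0$ equals $n$ minus the rank of this system of linear equations.

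To finish, one must show that the number of linearly independent $\psi$ appearing in the support of $X_0$ is at least $\vert S \vert$. The linear-independence step itself mirrors Theorem~\ref{T:slcorrespondence}: any nontrivial dependence among the constraints $\psi(y) = r$ would force a nonzero integer multiple of $r$ to lie in $\mathbb{Z}$, contradicting $r \notin \mathbb{Q}$. The genuine obstacle is producing enough such $\psi$. My approach is orbit-structural: because $X_0$ and $X$ are $\ratG$-conjugate, the partition $\lambda$ and the isometry classes of the induced forms $\Q_j$ on the lowest-weight spaces are invariants of $X_0$, and I would read these off from $X_0$ component-by-component. In each isotypic block of $X_0$ one expects to find at least $\tfrac12(j-1)m_j$ distinct roots of type $e_a - e_b$ for odd $j$, and $Nm_j - m$ distinct roots of mixed type for even $j = 2N$. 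The most delicate case, and the principal obstacle I anticipate, is the even-$j$ count: one must verify that the $m$ hyperbolic summands of $\Q_j$ account for exactly $m$ independent roots of type $e_a + e_b$ and the $m_j - 2m$ anisotropic summands for exactly $m_j - 2m$ independent roots of type $2e_a$, with no trade-off between the two types that could collapse the total. Here the explicit classification in Table~\ref{Table:anisotropic} together with the Hilbert-symbol invariants of the $\Q_j$ would be invoked to rule out any such reduction.
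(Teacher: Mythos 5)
Your first half — establishing that $H_{\lambda,\overline{\Q}}$ is nonempty, that $X \in \ratg_{x,r}$ for $x$ in a maximal facet $\facet$, that the explicit triple $(Y,H,X)$ of Proposition~\ref{P:representatives} lifts a Lie triple over $\resk$, and that Lemma~\ref{L:DB1} therefore gives $\mathcal{O}(\facet,v)=\Ad(\ratG)X$ — matches the paper's argument. So does the reduction of distinguishedness, via Proposition~\ref{P:mine} and Corollary~\ref{C:justone}, to bounding $\dim\facet_0$ for irrational $r$, and lifting $v_0$ to $X_0 \in \bigoplus_{\psi\colon\psi(x_0)=r}\ratg_\psi$.

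The gap is in the counting step, where you depart from the paper. You propose to read the partition and the forms $\Q_j$ off $X_0$ ``component-by-component'' in each isotypic block and to bound the number of roots block by block, and you flag the even-$j$ within-block trade-off between roots of type $e_a+e_b$ and $2e_a$ as the principal danger, to be ruled out by Table~\ref{Table:anisotropic} and Hilbert symbols. This misidentifies the difficulty on both ends. The within-$j$ trade-off is not actually dangerous: a Witt decomposition $q_0^m\oplus\Qani$ already minimizes the number of affine-root hyperplanes needed to realize a fixed $\Q_j$ in matrix form (a root $e_a+e_b$, $a\neq b$, carves out a hyperbolic plane, and an anisotropic block can never be written with fewer hyperplanes than its dimension), so there is nothing to check with Hilbert symbols. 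The genuine danger is \emph{cross}-$j$: the matrix $B$ that controls the $i=-1$ contribution to the root count realizes, not the individual $\Q_j$, but their orthogonal sum $-\Q_2\oplus\Q_4\oplus\cdots\oplus(-1)^n\Q_{2n}$ on $V[-1]$ (this is the paper's Lemma~\ref{L:B}, a key ingredient your outline does not identify). Because direct sums of anisotropic forms with opposite signs create new hyperbolic planes (e.g.\ $\diag(1)\oplus\diag(-1)\cong q_0$), a lift $X_0$ whose $H_0$-weight spaces are not aligned with the isotypic decomposition can genuinely realize $B$ with \emph{fewer} roots than the block-diagonal choice, so a block-by-block count gives the wrong bound. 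The paper's proof closes this gap with a structural argument you would need to reconstruct: if a hyperbolic pair in $V[-1]$ is assembled from vectors in two distinct isotypic components $E(2t_1)\neq E(2t_2)$, then chasing those vectors up the $X_0$-filtration forces extra nonzero off-diagonal entries in some higher graded piece $X_0^{(2s+1)}$, $s\geq 1$, increasing $\vert\Phi_{(2s+1)}\vert$ by at least as much as $\vert\Phi_{(-1)}\vert$ was reduced. Without this compensating count, the bound $\dim\facet_0\leq\dim\facet$ does not follow, and invoking the anisotropic classification cannot substitute for it, since the isometry class of the direct sum over $j$ is not determined by the individual classes of the $\Q_j$.
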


\begin{proof}
The first assertion follows from the construction of $S$ as a linearly
independent subset of $\Phi$, and of $H_{\lambda, \overline{\Q}}$ as
an intersection of hyperplanes of the form $H_{\psi-r}$.
Given $\facet \subset H_{\lambda, \overline{\Q}}$ of maximal dimension,
and $x \in \facet$, we deduce from the proof of Proposition~\ref{P:representatives} that $X \in \ratg_{x,r}$, $H \in \ratg_{x,0}$ and $Y \in \ratg_{x,-r}$.
Furthermore, by the hypotheses on the residual characteristic, this
Lie triple projects onto a Lie triple 
$(w,h,v) \in V_{x,-r} \times V_{x,0} \times V_{x,r}$.
Thus we deduce that $\mathcal{O}(\facet,v) = \Ad(\ratG)X$,
as in the proof of Theorem~\ref{T:slcorrespondence}.  

It remains
to show that $(\facet,v)$ is distinguished, which we do by demonstrating
that $\dim(\facet) \geq \dim(\facet_0)$ for any other pair
$(\facet_0,v_0)$ such that $\mathcal{O}(\facet_0,v_0) = \Ad(\ratG)X$.

Start with $(\facet_0,v_0)$ and $x_0 \in \facet_0$.  Complete
$v_0$ to a Lie triple $(w_0,h_0,v_0)$ with adapted one-parameter
subgroup $\mu$ over $\resk$.  Using the argument in \cite[\S 4.3]{DeBacker}, we may 
conjugate the triple and $\mu$ by an element which fixes $x \in \apart$
to obtain a new Lie triple such that the adapted one-parameter subgroup
lies in $X_*(\algT)$.   Lift this to a Lie triple $(Y_0,H_0,X_0) \in 
\ratg_{x,-r} \times \ratg_{x,0} \times \ratg_{x,r}$.  

There exists a choice of $g$ normalizing $\ratT$ so that $\Ad(g)H_0$ is a dominant
toral element.  Since $g$ preserves $\apart$, we may without loss of
generality replace $\facet$ with $g\facet$ and $(Y_0,H_0,X_0)$ with
its $\Ad(g)$-conjugate.  (Note that in general this is not equal
to the Lie triple $(Y,H,X)$, since our $H$ is not generally 
dominant.)

We wish to show that if $\Ad(\ratG)X = \Ad(\ratG)X_0$, then
the dimension of $\facet_0$ is at most equal to $\dim(\facet)$.
As in the proof of Theorem~\ref{T:slcorrespondence}, we
assume that $r$ is irrational and begin by deducing that
\begin{equation*}
X_0 \in \bigoplus_{\psi \colon \psi(x)=r}\ratg_{\psi}.
\end{equation*}
Now let $\Phi(X_0) = \{\dot{\psi} \in \Phi \colon \psi(x)=r\}$; 
then we have 
\begin{equation}  \label{E:X0decomp}
X_0 \in \bigoplus_{\alpha \in \Phi(X_0)} \ratg_\alpha.
\end{equation}
Let $s = \dim(\textrm{span} \Phi(X_0))$.  Then $\dim(\facet_0) = n-s$ so 
our goal is to 
minimize $s$.    As was the case for $\SLn$, this is closely
related to the goal of minimizing the number of nonzero entries
in the matrix representing $X_0$; however, as some root spaces
contain matrices with two nonzero entries, there is more to say.

Let $V[i]$ denote the $i$-weight space of the toral element $H_0$.
Since $H_0$ is dominant, these weight spaces are strictly ordered
with respect to the basis \eqref{E:basis} of $V$.  
More precisely, we can define a decreasing list of indices $k_i$ by
$$
k_{i} = \sum_{j > i} \dim V[j], \quad  0 \leq i \leq 2n.
$$
Setting $k_{-i} = k_i$ and noting than $k_0 \leq n$, we have
\begin{equation} \label{E:basis2}
V[i] = \begin{cases}
\textrm{span}\{p_{k_i+1}, p_{k_i+2}, \cdots, p_{k_i+\dim V[i]} = p_{k_{i-1}}\} & \text{if $i>0$};\\
\textrm{span}\{q_{k_i+1}, q_{k_i+2}, \cdots, q_{k_i+\dim V[i]} = q_{k_{i-1}}\} & \text{if $i<0$;}\\
\textrm{span}\{p_{k_0+1}, \cdots, p_n, q_{k_0+1}, \cdots, q_n\} & \text{if $i=0$.}
\end{cases}
\end{equation}
In particular, $V[0]$, as well as $V[-i] \oplus V[i]$ for each $i \geq 1$,  are
symplectic subspaces of 
$V$.

For each $i$, the restriction of $X_0$ to the $i$-weight space gives a
map $X_0 \colon V[i] \to V[i+2]$; when $i \geq -1$, this map is surjective.
Let us decompose $X_0$ into a sum of simpler elements in $\ratg$ via 
these restricted maps.  

Since $V[-1]\oplus V[1]$ is a symplectic subspace of $V$,
the map $X_0^{(-1)}$ defined by extension by zero of the restriction
$X_0 \colon V[-1] \to V[1]$ is a well-defined  element of $\ratg$.  Similarly, for $i\geq 0$,
define $X_0^{(i)}$ to be the restriction of $X_0$ to the
domain  $V[i] \oplus V[-i-2]$, and zero on all other weight spaces; then $X_0^{(i)} \in \ratg$.
% since $V[-2] \oplus V[0] \oplus V[2]$ is a symplectic subspace of $V$. Finally, for each $i > 0$, the restriction of $X_0$ to the  direct sum $V[i] \oplus V[-i-2]$, extended by zero to all of $V$, yields a well-defined element $X_0^{(i)} \in \ratg$.  
Altogether, we have a decomposition of $X_0$ as
$$
X_0 = \sum_{i \geq -1} X_0^{(i)}.
$$
By construction, these components are supported on disjoint subsets of
the basis \eqref{E:basis} of $V$ associated to the root system
$\Phi$.  It follows that we may decompose the set of roots $\Phi(X_0)$
occurring in \eqref{E:X0decomp} as a disjoint union
$$
\Phi(X_0) = \Phi_{(-1)} \cup \Phi_{(0)} \cup \cdots \cup \Phi_{(2n-2)}
$$
such that for each $i \geq -1$,  
\begin{equation} \label{E:X0basis2}
X_0^{(i)} \in \bigoplus_{\alpha \in \Phi_{(i)}}\ratg_\alpha.
\end{equation}
We can say more about these sets $\Phi_{(i)}$.  

Suppose first that $i \geq 1$. Then we deduce from \eqref{E:basis2} that
%we can see from by \eqref{E:basis2} that $X_0^{(i)}$ sends elements
%from one distinguished  Lagrangian subspace to the other;
% lies in the Siegel parabolic subalgebra; 
%hence
\begin{equation} \label{E:set}
\Phi_{(i)} \subseteq  \{ e_l-e_j \colon l \in \{ k_{i+2}+1, \cdots, k_{i+1}\}, j \in  \{k_i+1, \cdots, k_{i-1}\} \}.
\end{equation}
Furthermore, since the restriction $X_0 \colon V[i] \to V[i+2]$ is surjective,
the matrix $A$ representing this map has at least one nonzero entry in each
row.  In terms of the roots which occur in $\Phi_{(i)}$, one may deduce
that for each $l \in \{ k_{i+2}+1, \cdots, k_{i+1}\}$ there exists a $j_l \in \{k_i+1, \cdots, k_{i-1}\}$
such that $e_l - e_{j_l} \in \Phi_{(i)}$.  Hence we conclude that
$\vert \Phi_{(i)} \vert \geq \dim(\textrm{span} \Phi_{(i)}) \geq \dim V[i+2]$.

Next consider $i=0$.  Since $V[0]$ meets both $\textrm{span}\{p_1,\ldots, p_n\}$
and $\textrm{span}\{q_1,\ldots,q_n\}$, it follows that additionally $\Phi_{(0)}$ may contain  roots of the form
$e_l+e_j$, for indices $l$ and $j$ corresponding
to $V[0]$ and $V[2]$, respectively.
%Since $V[0]$ contains elements from each of the distinguished
%Lagrangian subspaces, the image of some $p_l \in V[0]$ under $X_0^{(0)}$ may have nonzero components
%with respect to the $p_j$'s%, albeit for a distinct set of indices (and
%and similarly for the $q_l \in V[-2]$.  Consequently
Consequently,
$$
\Phi_{(0)} \subseteq \{ e_l \pm e_j \colon l \in \{ k_{2}+1, \cdots, k_{1}\}, j \in  \{k_0+1, \cdots, n \} \}.
$$
Since $X_0^{(0)}$ has full rank, we deduce as above that 
$\vert \Phi_{(0)} \vert \geq \dim(\textrm{span} \Phi_{(0)}) \geq \dim V[2]$. 

Finally, suppose that $i=-1$.  The map $X_0^{(-1)}$ 
sends the span of $\{ q_{k_1+1}, \ldots, q_{k_0}\}$ onto
the span of $\{ p_{k_1+1}, \ldots, p_{k_0}\}$, so we have that
\begin{equation} \label{E:badcase}
\Phi_{(-1)} \subseteq \{ e_l + e_j \colon l,j \in  \{k_1+1, \cdots, k_0 \}\}.
\end{equation}
This time, however, there is little connection between the number of roots in $\Phi_{(-1)}$
and the rank $\dim V[1]$ of $X_0^{(-1)}$.  Namely, when $l=j$, a root
 vector corresponding to the root $e_l+e_j$ has a single nonzero entry,
whereas when $l \neq j$, it has two nonzero entries, in two distinct rows. 

To better understand the constraints on $\Phi_{(-1)}$ we consider the symmetric matrix $B$ 
representing $X_0 \colon V[-1] \to V[1]$
with respect to the bases \eqref{E:basis2}.  We have the
following Lemma.

\begin{lemma} \label{L:B}
If $X_0$ represents the $\ratG$-orbit defined by the pair $(\lambda, \overline{\Q})$ 
then the matrix $B$ represents the quadratic form
\begin{equation} \label{E:q}
-\Q_2 \oplus \Q_4 \oplus -\Q_6 \oplus \cdots \oplus (-1)^n \Q_{2n}.
\end{equation}
\end{lemma}

\begin{proof}
Given the Lie triple $(Y_0,H_0,X_0)$, consider the 
decomposition \eqref{E:decomp} of $V$ it determines.
For each $t \geq 1$, the lowest weight space $L(2t)$ of the isotypic component
$V(2t)$ (if nonzero) carries the quadratic form $\Q_{2t}$ of dimension $m_{2t} = \dim L(2t)$.
This form is given by $(v,w)_{2t} = \langle v, X_0^{2t-1} w \rangle$.
Note that $L(2t) \subseteq V[-2t+1]$. There is a natural map 
from $L(2t)$ to $V[-1]$ given by $v \mapsto X_0^t v$; denote
its image $E(2t)$.  Then $V[-1]$ decomposes as direct sum
\begin{equation} \label{E:splitv1}
V[-1] = \bigoplus_{t \geq 1} E(2t).
\end{equation}
The form $(,)_{2t}$ on $L(2t)$ induces
a form $(,)$ on $E(2t)$, for each $t$, and hence by orthogonal direct sum
a quadratic form on $V[-1]$.
It is given on each $E(2t)$ by the formula
$$
(X_0^t v, X_0^t w) = \langle X_0^t v, X_0X_0^t w \rangle = (-1)^{t} \langle v, X_0^{2t-1}w \rangle = (-1)^t (v,w)_{2t}.
$$
Since $V[-1]$ and $V[1]$ have complementary bases under $\langle,\rangle$
the Lemma follows.
 \end{proof}

Given all the above, we wish to bound the value of $\vert \Phi(X_0) \vert$.  We consider 
two possible cases.

In the first case, suppose that each subspace $E(2t)$ of the decomposition \eqref{E:splitv1} 
is spanned by a subset of $\{q_{k_1+1}, \ldots, q_{k_0}\}$.  Then
by orthogonality $X_0E(2t)$ has as basis the complementary subset of
$\{p_{k_1+1},\ldots, p_{k_0}\}$, and so the matrix of $B$ is block-diagonal,
corresponding to the decomposition \eqref{E:q}.  Hence the problem of minimizing 
$\vert \Phi_{(-1)} \vert$
is reduced to minimizing the roots required to represent each of the
quadratic forms $\Q_{2t}$, which by Section~\ref{SS:lam} implies
one should choose for each $\Q_{2t}$ the minimal matrix representative $q_0^m \oplus Q_{aniso}$ (up to order of the summands).

Consequently, in this first case, we deduce that the minimal
possible value of $\vert \Phi(X_0) \vert$ is exactly $\vert \Phi(X) \vert$,
and thus that $\dim(\facet_0) \leq \dim(\facet)$, as required.

In the second case, suppose that some of the subspaces $E(2t)$ are not
aligned with the symplectic basis.  The only way to reduce
the size of $\Phi_{(-1)}$ would be to choose a matrix representative 
$B$ of the quadratic form \eqref{E:q} which implies fewer roots 
than our preferred choice, above.  This in turn can only happen 
if we form a hyperbolic plane
from vectors coming from different isotypic components.  We claim
that any such reduction in
$\vert \Phi_{(-1)} \vert$ would be offset by a corresponding increase in some
$\vert \Phi_{(i)} \vert$ with $i \geq 1$, as follows.

Suppose that $B$ contains a summand $q_0$ at indices $t_1$ and $t_2$.
% has a pair of rows $t_1$ and $t_2$ corresponding to a hyperbolic plane.  That is, suppose there exist
So we have 
$q_{t_1}, q_{t_2} \in V[-1]$ such that 
$$
X_0q_{t_1} = p_{t_2}\quad \text{and} \quad X_0q_{t_2} = p_{t_1}.
$$
If $q_{t_1}$ and $q_{t_2}$ lie in the same isotypic component of
$V$, then this hyperbolic plane falls under the analysis of
the first case so 
assume this is not the case.
% instead we assume that $q_{t_i}$ is expressible as a linear combination of vectors from two or more subspaces of the form $E(2t)$.

For each $i=1,2$, consider the sequences
$(p_{t_i}, X_0p_{t_i}, X_0^2p_{t_i}, \cdots)$.  If there exists
an $i \in \{1,2\}$ and a least $w > 0$ such that $X_0^wp_{t_i}$ is
not again a scalar multiple of some $p_j$, then write $X_0^{w-1}p_{t_i} = cp_j$
and $X_0^wp_{t_i} = X_0(cp_j) = \sum_{i=1}^n c_ip_i$ with at least
two coefficients $c_k, c_l$ different from zero.   Now $p_j \in V[2s+1]$
for some $s \geq 1$, so by \eqref{E:X0basis2}, 
 it follows that $e_{k}-e_j, e_{l}-e_j \in \Phi_{(2s+1)}$.
Using rank arguments as before (this time counting columns with nonzero
entries), we conclude that $\vert \Phi_{(2s+1)} \vert \geq 1+ \dim V[2s+3]$.
 Thus the decrease by one in the size of $\Phi_{(-1)}$ is offset by an increase of
at least one in the size of $\Phi_{(2s+1)}$. 

So we may assume from now on that both sequences consist of
multiples of vectors from our symplectic basis.  That is,
for each $i$ we have indices $s(i,l)$ and an integer
$w_i$ so that up to nonzero scalar multiples
$$
(p_{t_i}, X_0p_{t_i}, X_0^2p_{t_i}, \cdots) = (p_{t_i}, p_{s(i,1)}, \cdots, p_{s(i,w_i)}, 0, \cdots).
$$
 Now note that for any $k \in \{t_i, s(i,1), \cdots, s(i,w_i) : i \in \{1,2\}\}$, if 
$X_0p_k = cp_j \neq 0$
then $\langle X_0q_j, p_k \rangle = -\langle q_j,X_0p_k\rangle = -\langle q_j,cp_j\rangle =c$, so $X_0q_j = cq_k + \text{other terms}$.
%\sum_{i=1}^n c_iq_i$ with $c_i \neq 0$.
If the other terms were nonzero, then the same analysis as
above would yield an $s \geq 1$ such that $\vert \Phi_{(2s+1)} \vert$
is not minimal.  Hence we may assume $X_0q_j = cq_k$.

From the sequence above we have that $p_{s(i,w_i)}$ is a vector of 
highest weight
$2w_i+1$; hence $q_{s(i,w_i)}$ has weight $-2w_i-1$ since they are 
complementary in the symplectic basis.   By the preceding paragraph,
however, we have that the smallest invariant subspace containing 
$q_{s(i,w_i)}$ contains $p_{t_j}$,for $j\neq i \in \{1,2\}$, as
a highest weight vector; so necessarily $w_j \geq w_i$.
We deduce that 
$w_1=w_2=w$, which implies
$q_{t_1}$ and $q_{t_2}$ both lie in $E(2w+2)$, contrary to assumption.

Finally, we remark that given any number of distinct hyperbolic pairs
arising from $V[-1]$, the roots added to $\Phi(X_0)$ through the
above argument will be distinct.
We conclude that one cannot do better than $\Phi(X)$, so 
$\dim(\facet) \geq \dim(\facet_0)$.
 \end{proof}

\begin{corollary} \label{C:dim}
Let $(\lambda,\overline{\Q})$ represent a rational nilpotent orbit $\mathcal{O}$.
Suppose that $(\facet,v) \in I_r^d$ such that $\mathcal{O} = \mathcal{O}(\facet,v)$.  
For each $i$, let $a_i$ denote the dimension of the largest anisotropic
subspace of $\Q_{2i}$.  Then
$$
\dim(\facet) = \frac12\left(\vert \lambda \vert  - \sum_{i=1}^n a_i \right).
$$
\end{corollary}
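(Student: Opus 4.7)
The plan is to use Theorem~\ref{T:debackerspn} to identify $\facet$ as a maximal $r$-facet inside $H_{\lambda,\overline{\Q}}$, and then compute its dimension by a linear-algebra count. Since $H_{\lambda,\overline{\Q}}$ is a nonempty intersection of the affine hyperplanes $H_{\alpha + c - r}$ indexed by the roots $\alpha \in S := \bigcup_{j \colon m_j \neq 0} S_j$, and $\dim\apart = n$, one has $\dim(\facet) = n - \dim_{\mathbb{R}}(\textrm{span}\,S)$. The coordinate blocks used by the $S_j$ are pairwise disjoint --- $S_j$ involves only $\{e_{s_j+1},\ldots,e_{s_j+N_j}\}$ with $N_j = \frac{1}{2} j m_j$, and $\sum_j N_j = n$ --- so $\dim(\textrm{span}\,S) = \sum_j d_j$ with $d_j := \dim(\textrm{span}\,S_j)$, giving $\dim(\facet) = \sum_j (N_j - d_j)$.

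For odd $j$ (with $m_j$ even), $S_j = S_j^1$ consists of $\frac{(j-1)m_j}{2}$ consecutive-index simple roots that decompose into $\frac{m_j}{2}$ chains of length $j - 1$ (one per Jordan block of size $j$ in $J_\mu$). Each chain is linearly independent and spans $j-1$ of the $j$ coordinates of its block, so $d_j = \frac{(j-1)m_j}{2}$ and $N_j - d_j = \frac{m_j}{2}$. For even $j = 2i$, the $(i-1)m_j$ difference roots in $S_j^1$ form $m_j$ chains of length $i$ indexed by $c \in \{1,\ldots,m_j\}$ (the $c$-th chain runs through the coordinates $s_j + c + l m_j$ for $l = 0,\ldots,i-1$), contributing $i-1$ independent relations per chain. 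Quotienting $\mathbb{R}^{N_j}$ by the span of these chain differences collapses each chain to its final coordinate $e_{s_j+M_j+c}$ and yields an $m_j$-dimensional quotient. The remaining roots in $S_j$ --- the $m = (m_j - a_i)/2$ sum roots of $S_j^1$ and the $a_i$ doubling roots $2e_{s_j+M_j+l}$ of $S_j^2$ --- already live in this last block of $m_j$ coordinates and decompose into hyperbolic pairs plus anisotropic singletons, mirroring $Q_j = q_0^m \oplus \Qani$. They contribute $m + a_i = \frac{m_j + a_i}{2}$ further independent dimensions, so $d_j = (i-1)m_j + \frac{m_j + a_i}{2}$ and $N_j - d_j = \frac{m_j - a_i}{2}$.

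Summing over $j$ (odd parts contribute no $a_i$) gives
\[
\dim(\facet) = \sum_{j\text{ odd}} \frac{m_j}{2} + \sum_{i \colon m_{2i} \neq 0} \frac{m_{2i}-a_i}{2} = \frac{1}{2}\bigg(\vert \lambda \vert - \sum_i a_i \bigg),
\]
as required. The main obstacle is justifying the dimension count for even $j$: one has to verify joint linear independence of the chain-difference, sum, and doubling roots in $S_j$ and confirm that they split cleanly as $(i-1)m_j$ plus $\frac{m_j + a_i}{2}$. The quotient argument sketched above resolves this, since the chain differences kill intra-chain variation while the sum and doubling roots depend only on the final coordinate of each chain, and the last-block decomposition of these roots precisely matches the minimal matrix representative $Q_j = q_0^m \oplus \Qani$ chosen in Section~\ref{SS:lam}.
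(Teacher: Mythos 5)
Your proof is correct and follows the same route as the paper: both compute $\dim(\facet) = n - \vert S \vert$ by tallying the roots in each $S_j$ block and using $2m + a_i = m_{2i}$ and $\sum_j jm_j = 2n$. The only difference is that the paper simply invokes the linear independence of $S$ (asserted at the start of the proof of Theorem~\ref{T:debackerspn}) to pass from $\vert S \vert$ to $\codim$, whereas you re-derive that independence via the chain/quotient decomposition of each $S_j$; otherwise the count is identical.
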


\begin{proof}
Recall that $\vert \lambda \vert$ denotes the number of parts in the partition
$\lambda$, counting with multiplicity.

We have that $\dim \facet = \dim(H_{\lambda,\overline{\Q}}) = n - \vert S \vert$.
When $j$ is odd, $\vert S_{j} \vert = \vert I_{j} \vert = \frac12 jm_j - \frac12 m_j = \frac12 m_j (j-1)$.
When $j=2i$ is even, we have $2m+a_i = m_j$ so 
$\vert S_{j}^1 \vert = m_j(\frac12 j -1) + \frac12(m_j-a_i)$ and 
$\vert S_{j}^2 \vert = a_{i}$, so $\vert S_{j} \vert = \frac12 m_j(j-1)+\frac12a_{i}$.
Hence since $\sum jm_j = 2n$, we have $\vert S \vert = \sum_{j=1}^{2n} \frac12m_j(j - 1) + \frac12 \sum_{i=1}^n a_i
= \frac12 (2n - \sum_{j=1}^{2n} m_j + \sum_{i=1}^n a_i)$, and the result follows.
 \end{proof}

See Tables~\ref{T:examplesp4} and \ref{T:examplesp6} for examples.

\begin{remark}
Recall the notion of distinguished orbits, key to the Bala-Carter classification.  By \cite[Lemma 4.2]{Jantzen}, these may be characterized for $\Spn$ as the
orbits with partitions
consisting of distinct even parts.  So if $\mathcal{O}$ is distinguished, $a_i = 1$ for each $i$ and it follows from Corollary~\ref{C:dim}
that $\dim(\facet)=0$; the distinguished orbits correspond to vertices.
(This also holds for $\SLn$, where only the principal orbit
(corresponding to the partition $(n)$) is distinguished.) 
As we can see already from $Sp_4$ and $Sp_6$,
the converse is false in general.

Note also that the $r$-associativity classes of \emph{vertices} are easy to determine for any $r$;
there is one representative of each in the fundamental domain.
\end{remark}

\section{Some remarks on the case of small residual characteristic} \label{S:smallreschar}

The parametrization of nilpotent orbits via conjugacy classes
of Lie triples fails to hold in
small characteristic.   
Since this parametrization, over finite fields,
forms the backbone of DeBacker's results in \cite{DeBacker}, it is
not clear to what extent the DeBacker parametrization will
hold over $p$-adic fields with small residual characteristic.  
We explore this question in this section.

So let $\ratk$ be a $p$-adic field with residual characteristic $\resp$.
Then the partition-type parametrization of
nilpotent orbits discussed
in Section~\ref{S:sl} is valid for all $\resp$ and that in Section~\ref{S:sp}
is valid for all $\resp>2$.  In Section~\ref{S:debacker} we 
have assumed that $\resp >3(h-1)$.
Let us consider now values of $\resp$ less than this bound.

Consider the following essential characteristics
of the
correspondence $(\facet,v) \mapsto \mathcal{O}$:
\begin{enumerate}
\item That $\mathcal{O}$ is the unique nilpotent orbit of minimal
dimension meeting the coset $v = X+\ratg_{\facet+}$.
\item That $(\facet,v)$ corresponds to a maximal generalized $r$-facet
in $\buil(Y,H,X)$.
\item That $\gamma_d$ is bijective.
\end{enumerate}

We illustrate how (2) can fail when $\resp$ is small with
an example.

\begin{example} \label{examplep3}
Consider the group $\algG = \SLn$.
Suppose $n = 4$ and $\resp = 3$; let $\lambda = (4)$ be the full partition, corresponding to the principal nilpotent orbit.  
The rational orbits contained in $\mathcal{O}_\lambda(\ratk)$ 
are parametrized by
$$
X_d = J_\lambda D(d) = \left[ \begin{matrix} 0 & 1 & 0 & 0 \\ 0 & 0 & 1 & 0\\ 0 & 0 & 0 & d
\\ 0 & 0 & 0 & 0 \end{matrix} \right],
$$
with $d$ running over a list of representatives of the
distinct quartic classes in $\ratk^\times$.  
%(cube classes are a more interesting set since $p=3$.)
One has $H = H_\lambda = \diag(3,1,-1,-3)$ and 
$$
Y = D(d^{-1})Y_\lambda = \left[ \begin{matrix} 0 & 0 & 0 & 0 \\ 3 & 0 & 0 & 0\\ 0 & 4 & 0 & 0
\\ 0 & 0 & 3d^{-1} & 0 \end{matrix} \right].
$$
Let us suppose $\val(d) = 0$ for definiteness.   

Define $H_{\lambda,D(d)}$ as in Theorem~\ref{T:slcorrespondence}.
Let $\facet$ be a maximal $r$-facet in $H_{\lambda,D(d)}$; this is a
single vertex.  Then one readily verifies that the subgroups $\ratg_\facet$
and $\ratg_{\facet+}$ are of the following form (with the usual abuse
of notation):
$$
\ratg_{\facet} = \left[\begin{matrix} \PP & \RR & \RR & \RR \\
\PP & \PP & \RR & \RR \\
\PP & \PP & \PP & \RR \\
\PP & \PP & \PP & \PP 
\end{matrix} \right] 
\text{\;and\;} \ratg_{\facet+} = 
\left[\begin{matrix} 
\PP & \PP & \RR & \RR \\
\PP & \PP & \PP & \RR \\
\PP & \PP & \PP & \PP \\
\PP & \PP & \PP & \PP 
\end{matrix} \right].
$$
It follows  that any element in the coset $X_d + \ratg_{\facet+}$
has rank at least $3$, and so any nilpotent orbit meeting this coset
is regular, and hence of dimension at least (in fact equal to) 
$\dim(\Ad(\ratG)X_d)$.  We note directly that for all $g$ in the
upper triangular Borel subgroup $\ratB$, 
$\Ad(g)X_d \notin X_{d'} +\ratg_{\facet+}$ if $d \neq d' \mod (\RR^\times)^4$,
and that if $g$ is in any other Bruhat cell, then $\Ad(g)X_d$ does
not meet any such cosets.  Hence $\Ad(G)X_d$ is the unique nilpotent
orbit of minimum dimension meeting the coset $X_d+\ratg_{\facet+}$,
proving characterization (1). 

Now set $X = X_d$ for simplicity and let us determine $\buil(Y,H,X) \cap \apart$.  From our explicit
matrix choices for $Y$ and $X$ we have that
$Y \in \ratg_{x,-r}$ for some $x \in \apart$ if and only if
\begin{align*}
-\alpha_1(x) + \val(3) &\geq -r\\
-\alpha_2(x) + \val(4) & \geq  -r \\
- \alpha_3(x) + \val(3) & \geq  -r,
\end{align*}
whereas $X \in \ratg_{x,r}$ if and only if $\alpha_i(x) \geq r$ for 
$i \in \{1,2,3\}$.

Note that $\val(3) \geq 1$ and $\val(4) = 0$ in 
this case.  
 Thus $\buil(Y,H,X) \cap \apart = \{ x \in \apart \colon Y \in \ratg_{x,-r}, X \in \ratg_{x,r} \}$ is the set of all points $x \in \apart$ such that
\begin{align*}
r \leq &\;\alpha_1(x) \leq r + \val(3)\\
r \leq &\;\alpha_2(x) \leq r \\
r  \leq &\;\alpha_3(x) \leq r+\val(3).
\end{align*}
This region contains some 2-dimensional $r$-facets,
and thus $\facet$ is not maximal, so the characterization (2) fails
for this choice of $(\facet,v)$.

Let us now further show that a choice of $(\facet,v)$ satisfying (2) cannot
satisfy (1).

To fix our ideas, let us suppose $r \in (0,\frac14)$
and let $\facet'$ be the $r$-facet containing the point $x\in \apart$ such
that $\alpha_1(x)=\alpha_3(x) = \frac14$ and $\alpha_2(x) = r$.
Then $\facet'$ is a maximal $r$-facet in $\buil(Y,H,X) \cap \apart$.
Furthermore, for any $x \in \facet'$,
we have that $r \leq \phi(x) < 1+r$ for all positive roots $\phi \in \Phi^+$ and $-1+r < \phi(x) < r$
for all negative roots $\phi$, with exactly one equality (for $\phi = \alpha_2$).  
So the elements of $\ratg_{x,r}$
have their strictly upper triangular entries in $\RR$,
and all lower triangular entries in $\PP$.  
The strictness of all but one inequality
above implies that the elements of 
$V_{x,r} = \ratg_{x,r}/\ratg_{x,r+}$ may be represented by
matrices of the form
$$
v_a = \left[ \begin{matrix}
0 & 0 & 0 & 0 \\
0 & 0 & a & 0 \\
0 & 0 & 0 & 0\\
0 & 0 & 0 & 0
\end{matrix} \right]
$$
with $a \in \resk^\times$.  Similarly, we deduce that $\ratg_{x,-r}$
and $\ratg_{x,0}$ have the form
$$
\ratg_{x,-r} = \left[\begin{matrix} \RR & \RR & \RR & \RR \\
\PP & \RR & \RR & \RR \\
\PP & \RR & \RR & \RR \\
\PP & \PP & \PP & \RR 
\end{matrix} \right] , \quad 
\ratg_{x,0} = \left[\begin{matrix} \RR & \RR & \RR & \RR \\
\PP & \RR & \RR & \RR \\
\PP & \PP & \RR & \RR \\
\PP & \PP & \PP & \RR 
\end{matrix} \right].
$$
It is true that $(Y,H,X) \in \ratg_{x,-r} \times \ratg_{x,0} \times \ratg_{x,r}$.  However, now consider
instead the Lie triple
$$
Y' = \left[ \begin{matrix}
0 & 0 & 0 & 0 \\
0 & 0 & 0 & 0 \\
0 & 1 & 0 & 0\\
0 & 0 & 0 & 0
\end{matrix} \right], \quad
H' = \left[ \begin{matrix}
0 & 0 & 0 & 0 \\
0 & 1 & 0 & 0 \\
0 & 0 & -1 & 0\\
0 & 0 & 0 & 0
\end{matrix} \right], \quad
X' = \left[ \begin{matrix}
0 & 0 & 0 & 0 \\
0 & 0 & 1 & 0 \\
0 & 0 & 0 & 0\\
0 & 0 & 0 & 0
\end{matrix} \right].
$$
This triple certainly satisfies the condition that $(Y',H',X') \in \ratg_{x,-r} \times \ratg_{x,0} \times \ratg_{x,r}$, but now $\dim(\Ad(\ratG)X') < \dim(\Ad(\ratG)X)$.  Thus
$\Ad(\ratG)X$ is not the orbit of minimal dimension meeting the 
coset $X+\ratg_{x,r+}$.
Hence the distinguished pair $(\facet',v')$ (condition (2)) does not satisfy the minimality requirement (condition (1)).
\end{example}

This example illustrates that  
the set $\buil(Y,H,X)$ is too large to 
uniquely identify a facet 
which is associated to the nilpotent orbit $\Ad(\ratG)X$.

Nonetheless, through  Theorem~\ref{T:slcorrespondence}
and Theorem~\ref{T:debackerspn} there exist obvious candidates for
the ``distinguished pairs'' $(\facet,v)$, namely, those
 satisfying Proposition~\ref{P:mine}.  Unfortunately, Proposition~\ref{P:mine}
is a less satisfying definition because it is not intrinsic.  In any
case what still 
needs to be
proven is that property (3) continues to hold for these in small
residual characteristic.

For our final remark, we note that even this approach will encounter some obstacles for
the case of $\algG = \SLn$, as the Example~\ref{Example:notverygood}, below, illustrates.  We first recall a well-known result; one can prove it by adapting
the proof of \cite[Thm VI.2.22]{Lam}, which is the special
case $m=2$.

\begin{proposition} \label{P:cosets}
Let $m \geq 2$ and let $\mu_m(\ratk)$ denote the group of $m$th roots of unity in $\ratk$.  Then the number of distinct cosets of $(\ratk^\times)^m$ in $\ratk^\times$ is
$$
\vert \ratk^\times/(\ratk^\times)^m \vert = m \vert \RR^\times / (\RR^\times)^m \vert 
$$
and $\vert \RR^\times / (\RR^\times)^m \vert = \vert \mu_m(\ratk) \vert q^{\val(m)}$.
\end{proposition}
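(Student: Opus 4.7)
The strategy is to decompose $\ratk^\times$ as $\Z \times \RR^\times$ via the valuation and a choice of uniformizer $\p$: the map $x \mapsto (\val(x), \p^{-\val(x)} x)$ is a group isomorphism under which the subgroup of $m$th powers corresponds to $m\Z \times (\RR^\times)^m$. This gives $|\ratk^\times/(\ratk^\times)^m| = |\Z/m\Z| \cdot |\RR^\times/(\RR^\times)^m| = m \cdot |\RR^\times/(\RR^\times)^m|$ at once, reducing the proof to the second identity $|\RR^\times/(\RR^\times)^m| = |\mu_m(\ratk)|\, q^{\val(m)}$.

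For this, I would decompose $\RR^\times \cong \mu_{q-1}(\ratk) \times U_1$ via the Teichm\"uller lift, where $U_1 = 1 + \PP$, and write $m = m' \resp^s$ with $\gcd(m',\resp) = 1$. The $m$th power map on $\mu_{q-1}(\ratk)$, a cyclic group of order prime to $\resp$, acts as the $m'$th power, whose cokernel has order $\gcd(m', q-1) = |\mu_{m'}(\ratk)|$. Since $\mu_m(\ratk) \cong \mu_{m'}(\ratk) \times \mu_{\resp^s}(\ratk)$ by coprimality of the factors, it suffices to establish
$$|U_1/U_1^m| = |\mu_{\resp^s}(\ratk)|\, q^{\val(m)}.$$

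The key tool for this pro-$\resp$ computation is the $\resp$-adic logarithm. For $N$ sufficiently large, $\log$ is a topological group isomorphism $U_N \xrightarrow{\sim} \PP^N$ that intertwines $m$th powering with multiplication by $m$; hence $U_N$ contains no nontrivial $m$th roots of unity, and $[U_N : U_N^m] = [\PP^N : m\PP^N] = q^{\val(m)}$. Enlarging $N$ if necessary so that $\mu_{\resp^s}(\ratk) \cap U_N = \{1\}$ (possible since the group is finite and $\bigcap_N U_N = \{1\}$), the $m$th power map on $U_1$ has kernel exactly $\mu_{\resp^s}(\ratk)$ and descends to an isomorphism $U_1/\mu_{\resp^s}(\ratk) \xrightarrow{\sim} U_1^m$, yielding $[U_1^m : U_N^m] = [U_1 : U_N \cdot \mu_{\resp^s}(\ratk)] = [U_1 : U_N]/|\mu_{\resp^s}(\ratk)|$. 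Substituting this and $[U_N : U_N^m] = q^{\val(m)}$ into the tower identity
$$[U_1 : U_1^m]\cdot [U_1^m : U_N^m] = [U_1 : U_N] \cdot [U_N : U_N^m]$$
gives $[U_1 : U_1^m] = |\mu_{\resp^s}(\ratk)|\, q^{\val(m)}$, as required.

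The main obstacle is the wild case $\resp \mid m$. When $\gcd(m,\resp) = 1$, Hensel's lemma already shows the $m$th power map is a self-bijection of $U_1$, so both sides of the target identity collapse to $1$ and the calculation is immediate. When $\resp \mid m$ one must independently track the contribution of $\resp$-power roots of unity and the failure of surjectivity captured by $q^{\val(m)}$; the argument above hinges on choosing $N$ simultaneously large enough that $\log$ converges and is an isomorphism on $U_N$, and that $U_N$ has trivial intersection with the finite torsion subgroup $\mu_{\resp^s}(\ratk)$.
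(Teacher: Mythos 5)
Your proof is correct and complete. The paper itself does not spell out a proof — it simply defers to ``adapting the proof of [Lam, Thm~VI.2.22],'' which is the $m=2$ case — and your argument via the valuation decomposition $\ratk^\times \cong \Z \times \RR^\times$, the Teichm\"uller splitting $\RR^\times \cong \mu_{q-1}(\ratk) \times U_1$, and the $\resp$-adic logarithm on $U_N$ is exactly the standard way to carry out that adaptation, so it is essentially the intended approach; note only that the $\resp$-adic logarithm step (and indeed the statement itself, since $\val(m)$ is infinite when $\resp \mid m$ in positive characteristic) requires $\charp=0$, which is the setting in which the paper invokes this proposition.
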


Applying now  Proposition~\ref{P:slorbits}, we have
a more precise result about the number of 
rational nilpotent orbits of
$\SLn$.
We may now present an example to illustrate how (3) may fail
when $\resp$ is not very good for $\algG$.

\begin{example} \label{Example:notverygood}
Suppose $\algG = \SLn$ and that $\resp$ is a prime which is not very good
for $\algG$, meaning that $\resp$ divides $n$.  
Let $\lambda$ be a partition of $n$ such that $(\gcd(\lambda),\resp) =m>1$.
Then  $\val(m) \geq 1$ and so by Proposition~\ref{P:cosets}, 
there are $q^{\val(m)}\vert \mu_m(k) \vert$ distinct  representatives
of $\ratk^\times/(\ratk^\times)^m$ of any given valuation.  In
particular, there exist distinct cosets represented by elements $d,d'$ of
the same valuation, such that their difference $d-d'$ has strictly
larger valuation.  Thus the  corresponding representatives of the nilpotent orbit
will descend to the same element in $\ratg_{\facet}/\ratg_{\facet+}$.
\end{example}

In this example, the number of rational orbits in $\mathcal{O}_\lambda(\ratk)$
was too large to be parametrized by $\ratG_x$ orbits of $v$
in $V_\facet$, for any collection of pairs $(\facet,v)$.  Thus the DeBacker
correspondence cannot hold.
We note that this issue is
also a problem 
for the Bala-Carter classification over algebraically 
closed fields with $\charp$ not very good.

In contrast, for the group $\Spn$, the classical parametrization requires
only information about the square classes in $\ratk$, which is in
turn completely answered by the same question in $\resk$ when $\resp > 2$.  
It follows
that one could apply the arguments in this paper to identify 
equivalence classes of pairs $(\facet,v)$ in $\gamma_d^{-1}(\mathcal{O})$
for any nilpotent orbit $\mathcal{O}$ of $\Spn(\ratk)$,
for $\resp > 2$ (which is the set of very good primes for $\Spn$).


\begin{thebibliography}{00}

% \bibitem[A]{Adler} Adler, Jeffrey D. ``Refined anisotropic $K$-types and supercuspidal representations.'' Pacific J. Math. 185 (1998), no. 1, 1--32. 

\bibitem{Carter} Carter, Roger W.
\emph{Finite groups of Lie type:
Conjugacy classes and complex characters.} Reprint of the 1985 original. Wiley Classics Library. A Wiley-Interscience Publication. John Wiley \& Sons, Ltd., Chichester, 1993.

\bibitem{CMcG} Collingwood, David H.; McGovern, William M. \emph{Nilpotent orbits in semisimple Lie algebras.} Van Nostrand Reinhold Mathematics Series. Van Nostrand Reinhold Co., New York, 1993.

\bibitem{DeBacker} DeBacker, Stephen. ``Parametrizing nilpotent orbits via Bruhat-Tits theory'',  Ann. of Math. (2)  156  (2002),  no. 1, 295--332.


\bibitem{DeBackerhomog} DeBacker, Stephen. ``Homogeneity for reductive $p$-adic groups: an introduction'' in  \emph{Harmonic analysis, the trace formula, and Shimura varieties},  523--550, Clay Math. Proc., 4, Amer. Math. Soc., Providence, RI, 2005.

\bibitem{Dokovic} Dokovi\'c, Dragomir \v{Z}.
``Note on rational points in nilpotent orbits of semisimple groups.'' 
Indag. Math. (N.S.) 9 (1998), no. 1, 31--34. 


% \bibitem[G]{Garrett} Garrett, Paul. \emph{Buildings and classical groups.} Chapman \& Hall, London, 1997

%\bibitem[H]{Humphreys} Humphreys, James E. \emph{Introduction to Lie algebras and representation theory.} Second printing, revised. Graduate Texts in Mathematics, 9. Springer-Verlag, New York-Berlin, 1978.

\bibitem{Jacobson} Jacobson, Nathan. \emph{Lie algebras.} Republication of the 1962 original. Dover Publications, Inc., New York, 1979.

\bibitem{Jantzen} Jantzen, Jens Carsten.
``Nilpotent orbits in representation theory'' in \emph{Lie theory}, 1--211,
Progr. Math., 228, Birkh\"auser Boston, Boston, MA, 2004. 

\bibitem{Kottwitz} Kottwitz, Robert E.
``Rational conjugacy classes in reductive groups.''
Duke Math. J. 49 (1982), no. 4, 785--806. 

\bibitem{Lam} Lam, T.Y. \emph{Introduction to Quadratic Forms over Fields}, Graduate Studies in Mathematics, Vol 67, AMS, 2005.

\bibitem{McN1} McNinch, George J. ``Nilpotent orbits over ground fields of good characteristic.''  Math. Ann.  329  (2004),  no. 1, 49--85.

\bibitem{McNinch} McNinch, George J. ``Optimal ${\rm SL}(2)$-homomorphisms.''  Comment. Math. Helv.  80  (2005),  no. 2, 391--426. 

\bibitem{MoyPrasad} Moy, Allen; Prasad, Gopal. ``Unrefined minimal $K$-types for $p$-adic groups.''  Invent. Math.  116  (1994),  no. 1-3, 393--408. 

\bibitem{Nevins} Nevins, Monica. ``Admissible nilpotent orbits of real and $p$-adic split exceptional groups.'' Represent. Theory 6 (2002), 160--189. 

\bibitem{noel} No\"el, Alfred G.  ``Nilpotent orbits and theta-stable parabolic subalgebras.'' Represent. Theory 2 (1998), 1--32. 

\bibitem{Waldspurger} Waldspurger, Jean-Loup \emph{Int\'egrales orbitales nilpotentes et endoscopie pour les groupes classiques non ramifi\'es}, Ast\'erisque No. 269,  2001.
\end{thebibliography}
\end{document}